\documentclass[11pt]{article}
\pdfoutput=1

\usepackage[margin=1in]{geometry}


\usepackage{bm}
\usepackage{amsmath}
\usepackage{amsthm}
\usepackage{graphicx}
\usepackage[normalem]{ulem}
\usepackage{setspace}
\usepackage{amsthm}
\usepackage{amsfonts} 
\usepackage{epsfig}
\usepackage{hyphenat}
\usepackage{mathrsfs}
\usepackage{algorithm}
\usepackage{algorithmic}
\usepackage{tocvsec2}
\usepackage{caption}
\usepackage{makecell}
\usepackage{epsfig}%
\usepackage{subfigmat}  
\usepackage{cases}%
\usepackage{amssymb}%
\usepackage{xspace}%
\usepackage{array}%

\usepackage{rotating}
\usepackage[table]{xcolor}
\usepackage{tabularx,colortbl}
\usepackage{float}
\usepackage{textcomp}%
\usepackage{caption}
\usepackage{booktabs, multicol, multirow}
%
\usepackage{amsthm}
\usepackage{dsfont}  
\newcommand{\norm}[1]{\left\lVert#1\right\rVert}

\theoremstyle{definition}
\newtheorem{theorem}{Theorem}

\newtheorem{proposition}{Proposition}
\newtheorem{lemma}{Lemma}

\newtheorem{remark}{Remark}
\newtheorem{example}{Example}

\usepackage{cite}
\usepackage{hyperref}

\usepackage[nameinlink]{cleveref}

\begin{document}

\title{An Optimal EDG Method for Distributed Control of Convection Diffusion PDEs}

\author{Xiao Zhang%
	\thanks{College of Mathematics, Sichuan University, China (zhangxiaofem@163.com). X.~Zhang thanks Missouri University of Science and Technology for hosting him as a visiting scholar; some of this work was completed during his research visit.}%
	\and
	Yangwen Zhang%
	\thanks{Department of Mathematics
		and Statistics, Missouri University of Science and Technology,
		Rolla, MO (\mbox{ywzfg4@mst.edu}, singlerj@mst.edu). J.~Singler and Y.~Zhang were supported in part by National Science Foundation grant DMS-1217122.  J.~Singler and Y.~Zhang thank the IMA for funding research visits, during which some of this work was completed.}
	\and
	John~R.~Singler%
	\footnotemark[2]
}

\maketitle

\begin{abstract}
	We propose an embedded discontinuous Galerkin (EDG) method to approximate the solution of a distributed control problem governed by convection diffusion PDEs, and obtain optimal a priori error estimates for the state, dual state, their fluxes, and the control. Moreover, we prove the optimize-then-discretize (OD) and discrtize-then-optimize (DO) approaches coincide. Numerical results confirm our theoretical results.
\end{abstract}

\section{Introduction}
\label{intro}
We study the following distributed optimal control problem: 
\begin{align}
\min J(u)=\frac{1}{2}\| y- y_{d}\|^2_{L^{2}(\Omega)}+\frac{\gamma}{2}\|u\|^2_{L^{2}(\Omega)}, \quad \gamma>0, \label{cost1}
\end{align}
subject to
\begin{equation}\label{Ori_problem}
\begin{split}
-\Delta y+\bm \beta\cdot\nabla y&=f+u \quad\text{in}~\Omega,\\
y&=g\qquad\quad\text{on}~\partial\Omega,
\end{split}
\end{equation}
where $\Omega\subset \mathbb{R}^{d} $ $ (d\geq 2)$ is a Lipschitz polyhedral domain  with boundary $\Gamma = \partial \Omega$, $ f \in L^2(\Omega) $, $ g \in C^0(\partial\Omega) $, and the vector field $\bm{\beta}$ satisfies
\begin{align}\label{beta_con}
\nabla\cdot\bm{\beta}  \le  0.
\end{align}

Optimal control problems for convection diffusion equations have been extensively studied using many different finite element methods, such as standard finite elements \cite{MR2851444,MR2719819,MR2475653}, mixed finite elements \cite{MR2550371,MR2851444,MR2971662}, discontinuous Galerkin (DG) methods \cite{MR2595051,MR2587414,MR2773301,MR3416418,MR3149415,MR3022208,MR2644299} and  hybrid discontinuous Galerkin (HDG) methods \cite{HuShenSinglerZhangZheng_HDG_Dirichlet_control2,HuShenSinglerZhangZheng_HDG_Dirichlet_control3}. HDG  methods were first introduced by Cockburn et al.\ in \cite{MR2485455} for second order elliptic problem, and then they have been applied to many other problems \cite{MR2772094,MR2513831,MR2558780, MR2796169, MR3626531,MR3522968,MR3477794,MR3463051,MR3452794,MR3343926}. HDG methods keep the advantages of DG methods, but have a lower number of globally coupled degrees of freedom compared to mixed methods and DG methods. However, the degrees of freedom for HDG methods is still larger compared to standard finite element methods. Embedded discontinuous Galerkin (EDG) methods were first proposed in \cite{MR2317378}, and then analyzed in \cite{MR2551142}. EDG methods are obtained from the HDG methods by forcing the numerical trace space to be continuous. This simple change significantly reduce the number of degrees of freedom and make EDG methods competitive for flow problems \cite{MR3404541} and many other applications \cite{peraire2011embedded,MR3404541,fernandez2016,MR3528316,fu2017analysis}.

In \cite{ZhangZhangSingler17}, we utilized an EDG method for a distributed optimal control problems for the Poisson equation.  We obtained optimal convergence rates for the state, dual state and the control, but \emph{suboptimal} convergence rates for their fluxes.  This suboptimal flux convergence rate for the Poisson equation is a limitation of the EDG method with equal order polynomial degrees for all variables \cite{MR2551142}.  However, Zhang, Xie, and Zhang recently proposed a new EDG method and proved optimal convergence rates for all variables for the Poisson equation \cite{ZhangXieZhang17}.  This EDG new method is obtained by simply using a lower degree finite element space for the flux.  In this work, we use this new EDG method to approximate the solution of the above convection diffusion distributed optimal control problem, and in \Cref{sec:analysis} we prove optimal convergence rates for all variables.

There are two main approaches to compute the numerical solution of PDE constrained optimal control problems: the optimize-then-discretize (OD) and discretize-then-optimize (DO) approaches. In the OD approach, one first derives the first-order necessary optimality conditions, then discretizes the optimality system, and then solves the resulting discrete system by utilizing efficient iterative solvers \cite{MR1990645}.  In the DO approach, one first discretizes the PDE optimization problem to obtain a finite dimensional optimization problem, which is then solved by existing optimization algorithms, such as \cite{betts2002practical,nocedal2006sequential}. The discretization methods for which these two approaches coincide are called \emph{commutative}.  Intuitively, the DO approach is more straightforward in practice; however, not all discretization schemes are commutative.  In the non-commutative case, the DO approach may result in badly behaved numerical results; see, e.g., \cite{JunLiuZhuWang,leykekhman2012investigation}. Therefore, devising {commutative} numerical methods is very important. In \Cref{sec:EDG}, we prove the EDG method studied here is commutative for the convection diffusion distributed control problem.  Moreover, we provide numerical examples to confirm our theoretical results in \Cref{sec:numerics}.

\section{EDG scheme for the optimal control problem}
\label{sec:EDG}

\subsection{Notation}


Throughout the paper we adopt the standard notation $W^{m,p}(\Omega)$ for Sobolev spaces on $\Omega$ with norm $\|\cdot\|_{m,p,\Omega}$ and seminorm $|\cdot|_{m,p,\Omega}$ . We denote $W^{m,2}(\Omega)$ by $H^{m}(\Omega)$ with norm $\|\cdot\|_{m,\Omega}$ and seminorm $|\cdot|_{m,\Omega}$. Specifically, $H_0^1(\Omega)=\{v\in H^1(\Omega):v=0 \;\mbox{on}\; \partial \Omega\}$.  We denote the $L^2$-inner products on $L^2(\Omega)$ and $L^2(\Gamma)$ by
\begin{align*}
(v,w) &= \int_{\Omega} vw  \quad \forall v,w\in L^2(\Omega),\\
\left\langle v,w\right\rangle &= \int_{\Gamma} vw  \quad\forall v,w\in L^2(\Gamma).
\end{align*}
Define the space $H(\text{div},\Omega)$ as
\begin{align*}
H(\text{div},\Omega) = \{\bm{v}\in [L^2(\Omega)]^d, \nabla\cdot \bm{v}\in L^2(\Omega)\}.
\end{align*}

Let $\mathcal{T}_h$ be a collection of disjoint elements that partition $\Omega$.  We denote by $\partial \mathcal{T}_h$ the set $\{\partial K: K\in \mathcal{T}_h\}$. For an element $K$ of the collection  $\mathcal{T}_h$, let $e = \partial K \cap \Gamma$ denote the boundary face of $ K $ if the $d-1$ Lebesgue measure of $e$ is non-zero. For two elements $K^+$ and $K^-$ of the collection $\mathcal{T}_h$, let $e = \partial K^+ \cap \partial K^-$ denote the interior face between $K^+$ and $K^-$ if the $d-1$ Lebesgue measure of $e$ is non-zero. Let $\varepsilon_h^o$ and $\varepsilon_h^{\partial}$ denote the set of interior and boundary faces, respectively. We denote by $\varepsilon_h$ the union of  $\varepsilon_h^o$ and $\varepsilon_h^{\partial}$. We finally introduce
\begin{align*}
(w,v)_{\mathcal{T}_h} = \sum_{K\in\mathcal{T}_h} (w,v)_K,   \quad\quad\quad\quad\left\langle \zeta,\rho\right\rangle_{\partial\mathcal{T}_h} = \sum_{K\in\mathcal{T}_h} \left\langle \zeta,\rho\right\rangle_{\partial K}.
\end{align*}

Let $\mathcal{P}^k(D)$ denote the set of polynomials of degree at most $k \geq 0$ on a domain $D$.  We introduce the discontinuous finite element spaces
\begin{align}
\bm{V}_h  &:= \{\bm{v}\in [L^2(\Omega)]^d: \bm{v}|_{K}\in [\mathcal{P}^k(K)]^d, \forall K\in \mathcal{T}_h\},\\
{W}_h  &:= \{{w}\in L^2(\Omega): {w}|_{K}\in \mathcal{P}^{k+1}(K), \forall K\in \mathcal{T}_h\},\\
{M}_h  &:= \{{\mu}\in L^2(\mathcal{\varepsilon}_h): {\mu}|_{e}\in \mathcal{P}^{k+1}(e), \forall e\in \varepsilon_h\}.
\end{align}
Define  $M_h(o)$ and $M_h(\partial)$ in the same way as $M_h$, but with $\varepsilon_h^o$ and $\varepsilon_h^\partial$ replacing $\varepsilon_h$.  Note that $M_h$ consists of functions which are continuous inside the faces (or edges) $e\in \varepsilon_h$ and discontinuous at their borders. In addition, for any function $w\in W_h$ we use $\nabla w$ to denote the piecewise gradient on each element $K\in \mathcal T_h$. A similar convention applies to the divergence $\nabla\cdot\bm r$ for all $\bm r\in \bm V_h$.

For EDG methods, we only change the space of numerical traces $M_h$, which is discontinuous, into a continuous space $\widetilde{M}_h$ as follows:
\begin{equation}
\widetilde{M}_h:=M_h \cap \mathcal{C}^0 (\varepsilon_h).
\end{equation}  
The spaces  $\widetilde{M}_h(o)$ and $\widetilde{M}_h(\partial)$ are defined in the same way as $M_h(o)$ and $M_h(\partial)$.

Recall we assume the Dirichlet boundary data $ g $ is continuous. Let $ \mathcal{I}_h $ be a interpolation operator, so that $\mathcal{I}_h g$ is a continuous interpolation of $g$ on $\varepsilon_h^\partial$.

Again, in most of the EDG works in the literature the polynomial degree is equal for the three spaces $ \bm V_h $, $ W_h $, and $ \widetilde{M}_h $.  We lower the polynomial degree for the flux space $ \bm V_h $ as in the recent work \cite{ZhangXieZhang17}.

\subsection{Optimize-then-Discretize}

First, we consider the optimize-then-discretize (OD) approach: we use the EDG method to discretize the optimality system for the convection diffusion control problem.

It is well known that the optimal control problem \eqref{cost1}-\eqref{Ori_problem} is equivalent to the optimality system
\begin{subequations}\label{eq_adeq}
	\begin{align}
	-\Delta y+\bm \beta\cdot\nabla y &=f+u  \qquad \text{in}~\Omega,\label{eq_adeq_a}\\
	y&=g\qquad\qquad\text{on}~\partial\Omega,\label{eq_adeq_b}\\
	-\Delta z-\nabla\cdot(\bm{\beta} z) &=y-y_d\qquad \text{in}~\Omega,\label{eq_adeq_c}\\
	z&=0\qquad\qquad\text{on}~\partial\Omega,\label{eq_adeq_d}\\
	z+\gamma  u&=0\qquad\qquad \ \text{in}~\Omega.\label{eq_adeq_e}
	\end{align}
\end{subequations}
For $\bm q = -\nabla y$ and $\bm p = -\nabla z$, the mixed weak form of the optimality system \eqref{eq_adeq_a}-\eqref{eq_adeq_e} is given by
\begin{subequations}\label{mixed}
	\begin{align}
	(\bm q,\bm r)-( y,\nabla\cdot \bm r)+\langle y,\bm r\cdot \bm n\rangle&=0,\label{mixed_a}\\
	(\nabla\cdot(\bm q+\bm \beta y),  w) - (y\nabla\cdot\bm \beta, w)&= ( f+ u, w),  \label{mixed_b}\\
	(\bm p,\bm r)-(z,\nabla \cdot\bm r)&=0,\label{mixed_c}\\
	(\nabla\cdot(\bm p-\bm \beta z),  w)&= (y- y_d, w),  \label{mixed_d}\\
	( z+\gamma u,v)&= 0,\label{mixed_e}
	\end{align}
\end{subequations}
for all $(\bm r, w,v)\in H(\text{div},\Omega)\times L^2(\Omega) \times L^2(\Omega)$.

To approximate the solution of this system, the EDG method seeks approximate fluxes ${\bm{q}}_h,{\bm{p}}_h \in \bm{V}_h $, states $ y_h, z_h \in W_h $, interior element boundary traces $ \widehat{y}_h^o,\widehat{z}_h^o \in \widetilde{M}_h(o) $, and  control $ u_h \in W_h$ satisfying
\begin{subequations}\label{EDG_discrete2}
	\begin{align}
	(\bm q_h,\bm r_1)_{\mathcal T_h}-( y_h,\nabla\cdot\bm r_1)_{\mathcal T_h}+\langle \widehat y_h^o,\bm r_1\cdot\bm n\rangle_{\partial\mathcal T_h\backslash \varepsilon_h^\partial}&=-\langle  \mathcal I_hg,\bm r_1\cdot\bm n\rangle_{\varepsilon_h^\partial}, \label{EDG_discrete2_a}\\
	-(\bm q_h+\bm \beta y_h,  \nabla w_1)_{\mathcal T_h} - (y_h \nabla\cdot\bm \beta, w_1)_{\mathcal T_h}+\langle\widehat {\bm q}_h\cdot\bm n,w_1\rangle_{\partial\mathcal T_h}  \quad  \nonumber \\ 
	+\langle \bm \beta\cdot\bm n\widehat y_h^o,w_1\rangle_{\partial\mathcal T_h\backslash\varepsilon_h^\partial} -  ( u_h, w_1)_{\mathcal T_h} &=  - \langle \bm \beta\cdot\bm n \mathcal I_hg,w_1\rangle_{\varepsilon_h^\partial} \\
	&\quad + ( f, w_1)_{\mathcal T_h}, \label{EDG_discrete2_b}
	\end{align}
	for all $(\bm{r}_1, w_1)\in \bm{V}_h\times W_h$,
	\begin{align}
	(\bm p_h,\bm r_2)_{\mathcal T_h}-(z_h,\nabla\cdot\bm r_2)_{\mathcal T_h}+\langle \widehat z_h^o,\bm r_2\cdot\bm n\rangle_{\partial\mathcal T_h\backslash\varepsilon_h^\partial}&=0,\label{EDG_discrete2_c}\\
	-(\bm p_h-\bm \beta z_h, \nabla w_2)_{\mathcal T_h}+\langle\widehat{\bm p}_h\cdot\bm n,w_2\rangle_{\partial\mathcal T_h}  \quad  \nonumber\\
	-\langle\bm \beta\cdot\bm n\widehat z_h^o,w_2\rangle_{\partial\mathcal T_h\backslash \varepsilon_h^\partial} -  ( y_h, w_2)_{\mathcal T_h}&= -(y_d, w_2)_{\mathcal T_h},  \label{HDG_discrete2_d}
	\end{align}
	for all $(\bm{r}_2, w_2)\in \bm{V}_h\times W_h$,
	\begin{align}
	\langle\widehat {\bm q}_h\cdot\bm n+\bm \beta\cdot\bm n\widehat y_h^o,\mu_1\rangle_{\partial\mathcal T_h\backslash\varepsilon^{\partial}_h}&=0\label{EDG_discrete2_e},\\
	\langle\widehat{\bm p}_h\cdot\bm n-\bm \beta\cdot\bm n\widehat z_h^o,\mu_2\rangle_{\partial\mathcal T_h\backslash\varepsilon^{\partial}_h}&=0,\label{EDG_discrete2_f}
	\end{align}
	for all $\mu_1,\mu_2\in \widetilde{M}_h(o)$, and the optimality condition 
	\begin{align}
	(z_h+\gamma u_h, w_3 )_{\mathcal T_h} &=  0\label{EDG_discrete2_g},
	\end{align}
	for all $w_3\in W_h$.

	 The numerical traces on $\partial\mathcal{T}_h$ are defined as 
	\begin{align}
	\widehat{\bm{q}}_h\cdot \bm n &=\bm q_h\cdot\bm n+h^{-1} (y_h-\widehat y_h^o) +\tau_1 (y_h-\widehat y_h^o)   \qquad \mbox{on} \; \partial \mathcal{T}_h\backslash\varepsilon_h^\partial, \label{EDG_discrete2_h}\\
	\widehat{\bm{q}}_h\cdot \bm n &=\bm q_h\cdot\bm n+h^{-1} (y_h- \mathcal I_hg)+ \tau_1 (y_h- \mathcal I_hg)   \quad   \mbox{on}\;  \varepsilon_h^\partial, \label{HDG_discrete2_i}\\
	\widehat{\bm{p}}_h\cdot \bm n &=\bm p_h\cdot\bm n+h^{-1}(z_h-\widehat z_h^o)+ \tau_2(z_h-\widehat z_h^o)  \qquad  \; \mbox{on} \; \partial \mathcal{T}_h\backslash\varepsilon_h^\partial,\label{EDG_discrete2_j}\\
	\widehat{\bm{p}}_h\cdot \bm n &=\bm p_h\cdot\bm n+ h^{-1}z_h + \tau_2 z_h\qquad\qquad\qquad\qquad \; \mbox{on}\;  \varepsilon_h^\partial,\label{EDG_discrete2_k}
	\end{align}
\end{subequations}
where $\tau_1$ and $\tau_2$ are positive stabilization functions defined on $\partial \mathcal T_h$.  We show below that the OD and DO approaches coincide if $ \tau_2 = \tau_1 - \bm \beta \cdot \bm n $. The implementation of the OD approach is very similar to the HDG method in \cite{HuShenSinglerZhangZheng_HDG_Dirichlet_control2}, and hence is omitted here.

\subsection{Discretize-then-Optimize}
\label{DtO}
Now we derive the optimality conditions for the discretize-then-optimize (DO) approach when the optimal control problem is discretized by the EDG method. Therefore, we solve
\begin{equation}\label{cost_dis} 
\min_{u_h\in W_h}~ \frac{1}{2} \| y_h-y_d \|_{\mathcal{T}_h}^2+\frac{\gamma}{2}\| u_h \|_{\mathcal T_h}^2,\qquad \gamma>0,
\end{equation}
subject to the discrete state  equations
\begin{align}
(\bm q_h,\bm r_1)_{\mathcal T_h}-( y_h,\nabla\cdot\bm r_1)_{\mathcal T_h}+\langle \widehat y_h^o,\bm r_1\cdot\bm n\rangle_{\partial\mathcal T_h\backslash \varepsilon_h^\partial}=-\langle  \mathcal I_hg,\bm r_1\cdot\bm n\rangle_{\varepsilon_h^\partial}, &\label{EDG_discrete3_a}\\
-(\bm q_h+\bm \beta y_h,  \nabla w_1)_{\mathcal T_h} - (y_h \nabla\cdot\bm \beta, w_1)_{\mathcal T_h} +\langle {\bm q}_h\cdot\bm n, w_1\rangle_{\partial\mathcal T_h} \quad  \nonumber \\ 
+\langle (h^{-1}+\tau_1) y_h ,w_1\rangle_{\partial\mathcal T_h} +\langle  \bm \beta\cdot\bm n -(h^{-1}+\tau_1) y_h^o ,w_1\rangle_{\partial\mathcal T_h\backslash\varepsilon_h^\partial} \quad \nonumber \\ 
-  ( u_h, w_1)_{\mathcal T_h} = -\langle  \bm \beta\cdot\bm n -(h^{-1}+\tau_1) \mathcal I_h g ,w_1\rangle_{\partial\mathcal T_h\backslash\varepsilon_h^\partial}+ ( f, w_1)_{\mathcal T_h},& \label{EDG_discrete3_b}\\
\langle \bm q_h\cdot \bm n+(h^{-1}+\tau_1) ( y_h-\widehat y_h^o),\mu_1\rangle_{\partial\mathcal T_h\backslash\varepsilon^{\partial}_h}=0,&\label{EDG_discrete3_c}
\end{align}
for any $(\bm r_1,w_1,\mu_1)\in \bm V_h\times W_h\times \widetilde{M}_h(o)$.

The discretized Lagrangian functional is defined by
\begin{equation}\label{def_L}
\begin{split}
\hspace{1em}&\hspace{-1em} \mathcal L_h(\bm q_h,y_h,\widehat{y}_h^o;\bm p_h,z_h,\widehat{z}_h^o) = \frac{1}{2} \| y_h-y_d \|_{\mathcal{T}_h}^2+\frac{\gamma}{2}\| u_h \|_{\mathcal T_h}^2\\
& + (\bm q_h,\bm p_h)_{\mathcal T_h} - ( y_h,\nabla\cdot\bm p_h)_{\mathcal T_h}+\langle \widehat y_h^o,\bm p_h\cdot\bm n\rangle_{\partial\mathcal T_h\backslash \varepsilon_h^\partial}+\langle  \mathcal I_hg,\bm p_h\cdot\bm n\rangle_{\varepsilon_h^\partial}, \\
&+(\bm q_h+\bm \beta y_h,  \nabla z_h)_{\mathcal T_h} + (y_h \nabla\cdot\bm \beta, z_h)_{\mathcal T_h} - \langle {\bm q}_h\cdot\bm n, z_h\rangle_{\partial\mathcal T_h} \quad  \nonumber \\ 
&-\langle (h^{-1}+\tau_1) y_h ,z_h\rangle_{\partial\mathcal T_h} -\langle  (\bm \beta\cdot\bm n -h^{-1}-\tau_1) \widehat y_h^o ,z_h\rangle_{\partial\mathcal T_h\backslash\varepsilon_h^\partial} \quad \nonumber \\ 
&+ ( u_h, z_h)_{\mathcal T_h} - \langle  (\bm \beta\cdot\bm n -h^{-1}-\tau_1) \mathcal I_h g ,z_h\rangle_{\partial\mathcal T_h\backslash\varepsilon_h^\partial}+ ( f, z_h)_{\mathcal T_h},\\
&+\langle \bm q_h\cdot \bm n+(h^{-1}+\tau_1) ( y_h-\widehat y_h^o),\widehat z_h^o\rangle_{\partial\mathcal T_h\backslash\varepsilon^{\partial}_h}.
\end{split}
\end{equation}
Since the constraint  PDE is linear and the cost functional is convex, the necessary and sufficient optimality conditions can be obtained by setting the partial Fr\'echet-derivatives of \eqref{def_L} with respect to the flux $\bm q_h$, state $y_h$, numerical trace $\widehat y_h^o$ and control $u_h$ equal to zero. Thus, we obtain the system consisting of the discrete adjoint equations
\begin{subequations}
	\begin{align*}
	\frac{\partial \mathcal L_h}{\partial \bm q_h} \bm r_2&= (\bm p_h,\bm r_2)_{\mathcal{T}_h} + (\nabla z_h,\bm r_2)_{\mathcal{T}_h} - \langle  z_h, \bm r_2\cdot\bm n \rangle_{\partial \mathcal{T}_h} +\langle \widehat{z}_h^o, \bm r_2\cdot\bm n \rangle_{\partial \mathcal{T}_h\backslash\varepsilon_h^\partial}\nonumber\\
	& = (\bm p_h,\bm r_2)_{\mathcal{T}_h}-( z_h, \nabla\cdot\bm r_2)_{\mathcal{T}_h}+\langle \widehat{z}_h^o, \bm r_2\cdot\bm n \rangle_{\partial \mathcal{T}_h\backslash\varepsilon_h^\partial}=0,\\
	\frac{\partial \mathcal L_h}{\partial y_h} w_2&= -(\nabla\cdot \bm p_h,w_2)_{\mathcal{T}_h} + (\bm{\beta}  \nabla z_h, w_2)_{\mathcal T_h} +  (z_h\nabla\cdot\bm \beta,w_2 )_{\mathcal T_h} \\
	&\quad- \langle (h^{-1}+\tau_1) z_h, w_2\rangle_{\partial \mathcal T_h} +  \langle (h^{-1}+\tau_1) \widehat z_h^o, w_2\rangle_{\partial \mathcal T_h\backslash\varepsilon_h^\partial} +(y_h-y_d,w_2)_{\mathcal{T}_h} \\
	&= (\bm p_h - \bm \beta z_h,\nabla w_2)_{\mathcal{T}_h}  - \langle  \bm p_h\cdot\bm n + (h^{-1}+\tau_1 - \bm \beta \cdot\bm n ) z_h, w_2\rangle_{\partial \mathcal T_h}  \\
	&\quad+\langle (h^{-1}+\tau_1) \widehat z_h^o, w_2\rangle_{\partial \mathcal T_h\backslash\varepsilon_h^\partial} +(y_h-y_d,w_2)_{\mathcal{T}_h} =0,\\
	\frac{\partial \mathcal L_h}{\partial \widehat{y}_h^o}\mu_2&= \langle \bm p_h\cdot\bm n -(\bm{\beta}\cdot\bm n-h^{-1}-\tau_1) z_h - (h^{-1}+\tau_1) \widehat z_h^o, \mu_2\rangle_{\partial \mathcal{T}_h\backslash\varepsilon_h^\partial} = 0,
	\end{align*}
	Furthermore, we obtain the same optimality condition \eqref{EDG_discrete2_g} as in the OD approach:
	\begin{align*}
	\frac{\partial \mathcal L_h}{\partial u_h}  w_3= (\gamma u_h + z_h,w_3)_{\mathcal T_h}= 0.
	\end{align*}
\end{subequations}
In the OD approach, if the stabilization functions $ \tau_1 $ and $ \tau_2 $ satisfy
\begin{align}
\tau_2 = \tau_1 - \bm \beta \cdot \bm n,
\end{align}
then by comparing the above discrete adjoint equations with \eqref{EDG_discrete2} we obtain identical discrete systems; therefore, the two approaches coincide in this case, i.e., OD = DO.

\subsection{Implementation of DO}
\label{IDO}
In the DO approach, we need to deal with a large optimization problem \eqref{cost_dis} and \eqref{EDG_discrete3_a}-\eqref{EDG_discrete3_c} since the EDG method generates three variables: the flux $\bm q_h$, the scalar variable $y_h$, and the numerical trace $\widehat y_h$. Fortunately,  we can reduce the large scale problem into a smaller problem using the local solver for the EDG method.

\subsubsection{Matrix equations}

Assume $\bm{V}_h = \mbox{span}\{\bm\varphi_i\}_{i=1}^{N_1}$, $W_h=\mbox{span}\{\phi_i\}_{i=1}^{N_2}$, and $\widetilde{M}_h(o)=\mbox{span}\{\psi_i\}_{i=1}^{N_3} $. Then
\begin{equation}\label{expre}
\begin{split}
&\bm q_{h}= \sum_{j=1}^{N_1} \alpha_j \bm\varphi_j,  \quad
y_h = \sum_{j=1}^{N_2} \beta_j \phi_j, \quad \widehat{y}_h^o = \sum_{j=1}^{N_3}\gamma_{j}\psi_{j}, \quad u_h = \sum_{j=1}^{N_2} \zeta_{j} {\phi}_{j}.
\end{split}
\end{equation}
Substitute \eqref{expre} into \eqref{cost_dis}-\eqref{EDG_discrete3_c} to give the following finite dimensional  optimization problem:
\begin{subequations}\label{large_op}
	\begin{align}\label{large_op_a}
	\min_{\bm\zeta \in \mathbb R^{N_2}}  \frac 1 2 \bm \beta^T A_6 \bm \beta - b_1^T \bm \beta + \frac 1 2 \bm \zeta^T  A_6 \bm \zeta
	\end{align}
	subject to 
	\begin{align}\label{large_op_b}
	\begin{bmatrix}
	A_1   &-A_2&A_3&0  \\
	A_2^T &A_4&A_5&-A_6 \\
	A_3^T&A_7 & -A_8 &0
	\end{bmatrix}
	\left[ {\begin{array}{*{20}{c}}
		\bm{\alpha}\\
		\bm{\beta}\\
		\bm{\gamma}\\
		\bm{\zeta}
		\end{array}} \right]
	=\left[ {\begin{array}{*{20}{c}}
		-b_2\\
		b_3-b_4\\
		0
		\end{array}} \right],
	\end{align}
\end{subequations}
where $\bm \alpha,   \bm \beta, \bm{\gamma},\bm \zeta$ are the coefficient vectors for $\bm q_h,y_h,\widehat y_h^o,u_h$, respectively, and
%
%
\begin{gather*}
A_1= [(\bm\varphi_j,\bm\varphi_i )_{\mathcal{T}_h}],  \quad  A_2 = [(\phi_j,\nabla\cdot\bm{\varphi_i})_{\mathcal{T}_h}],  \quad  A_3 = [\langle \psi_j,\bm{\varphi}_i\cdot \bm n\rangle_{\partial\mathcal{T}_h\backslash \varepsilon_h^\partial}], \\
A_4 =- [(\phi_j,\nabla\cdot(\bm \beta\cdot \phi_i))_{\mathcal T_h}] +  [\langle (h^{-1} + \tau_1) \phi_j, \phi_i\rangle_{\partial \mathcal T_h}], \\
A_5 =  [\langle (\bm{\beta}\cdot\bm n- h^{-1} - \tau_1) \psi_j, \phi_i\rangle_{\partial \mathcal T_h\backslash\varepsilon_h^\partial}], \quad A_6= [(\phi_j,\phi_i )_{\mathcal{T}_h}], \\
A_7 = [\langle  (h^{-1}+\tau_1)\phi_j, \psi_i \rangle_{\partial{{\mathcal{T}_h}}\backslash\varepsilon_h^\partial}],  \quad  A_8 = [\langle  (h^{-1}+\tau_1)\psi_j, \psi_i \rangle_{\partial{{\mathcal{T}_h}}\backslash\varepsilon_h^\partial}], \\
b_1 = [(y_d,\phi_i )_{\mathcal{T}_h}], \quad b_2 = [\langle \mathcal I_h g, \bm r_1\cdot \bm n \rangle_{\varepsilon_h^\partial}], \quad  b_3 = [(f,\phi_i )_{\mathcal{T}_h}],\\
 b_4 = [\langle (\bm{\beta}\cdot\bm n- h^{-1} - \tau_1) g, \phi_i\rangle_{\varepsilon_h^\partial}].
\end{gather*}

Due to the discontinuous nature of the approximation spaces $\bm{V_h}$ and ${W_h}$, the first two equations of \eqref{large_op_b} can be used to eliminate both $\bm{\alpha}$ and $\bm{\beta}$ in an element-by-element fashion.  As a consequence, we can write system \eqref{large_op_b} as
\begin{align}\label{local_elei}
\left\{
\begin{aligned}
\bm \alpha  & =  G_1 \bm \gamma  + G_2 \bm{\zeta} + H_1,\\
\bm \beta  & =  G_3 \bm \gamma  + G_4 \bm{\zeta} + H_2,\\
G_5  &\bm \gamma + G_6 \bm \zeta = H_3.
\end{aligned}
\right.
\end{align}
We provide details on the element-by-element construction of the coefficient matrices $G_1, \ldots, G_6 $ and $H_1, H_2, H_3$ in the appendix.

Substituting \eqref{local_elei} into \eqref{large_op} gives the reduced optimization problem
\begin{subequations}\label{reduced_op}
	\begin{align}\label{reduced_op_a}
	\min_{\bm\zeta \in \mathbb R^{N_4}} \frac 1 2 \left[ {\begin{array}{*{20}{c}}
	\bm \gamma^T  \  \bm{\zeta}^T
	\end{array}} \right]
	\begin{bmatrix}
	B_1   & B_2  \\
	B_3  &   B_4 
	\end{bmatrix}
	\left[ {\begin{array}{*{20}{c}} 
		\bm \gamma \\ \bm{\zeta}
		\end{array}} \right] + \left[ {\begin{array}{*{20}{c}}
		b_5^T \   b_6^T
		\end{array}} \right]\left[ {\begin{array}{*{20}{c}} 
		\bm \gamma \\ \bm{\zeta}
		\end{array}} \right],
	\end{align}
	subject to 
	\begin{align}
	\left[ {\begin{array}{*{20}{c}}
		G_5 \ G_6
		\end{array}} \right]\left[ {\begin{array}{*{20}{c}} 
		\bm \gamma \\ \bm{\zeta}
		\end{array}} \right]  = H_3.
	\end{align}
\end{subequations}
where
\begin{gather*}
B_1 = G_3^TA_6G_3, \ \   B_2 = G_3^TA_6G_4, \ \  B_3 = G_4^TA_6G_3, \ \  B_4 = G_4^TA_6G_4 + A_6,\\
b_5 = G_3^T(A_6H_2 - b_1), \quad b_6 = G_4^T(A_6H_2 - b_1). 
\end{gather*}

\begin{remark}
	In the DO approach, we need to solve the optimization problem \eqref{reduced_op}; there are many existing optimization algorithms \cite{gill2005snopt} that can efficiently solve this problem.
\end{remark}

\section{Error Analysis}
\label{sec:analysis}

Next, we provide a convergence analysis of the above EDG method for the optimal control problem.  Throughout this section, we assume $ \bm \beta \in [W^{1,\infty}(\Omega)]^d $, $ \Omega $ is a bounded convex polyhedral domain, the solution is smooth enough, and $ h \leq 1 $.  

\subsection{Main result}
For our theoretical results, we require the stabilization functions $\tau_1$ and $\tau_2$ are chosen to satisfy
\begin{description}
	
	\item[\textbf{(A1)}] $\tau_2 = \tau_1 - \bm{\beta}\cdot \bm n$.
	
	\item[\textbf{(A2)}] For any  $K\in\mathcal T_h$, $\min{(\tau_1-\frac 1 2 \bm \beta \cdot \bm n)}|_{\partial K} >0$.
	
\end{description}
We note that \textbf{(A1)} and \textbf{(A2)} imply
\begin{equation}\label{eqn:tau1_condition}
\min{(\tau_2 + \frac 1 2 \bm \beta \cdot \bm n)}|_{\partial K} >0  \quad  \mbox{for any $K\in\mathcal T_h$.}
\end{equation}
Furthermore, \textbf{(A1)} implies the OD and DO approaches yield equivalent results; therefore, all of our convergence analysis is for the OD approach.

\begin{theorem}\label{main_res}
	We have
	\begin{align*}
	\|\bm q-\bm q_h\|_{\mathcal T_h}&\lesssim h^{k+1}(|\bm q|_{k+1}+|y|_{k+2}+|\bm p|_{k+1}+|z|_{k+2}),\\
	\|\bm p-\bm p_h\|_{\mathcal T_h}&\lesssim h^{k+1}(|\bm q|_{k+1}+|y|_{k+2}+|\bm p|_{k+1}+|z|_{k+2}),\\
	\|y-y_h\|_{\mathcal T_h}&\lesssim h^{k+2}(|\bm q|_{k+1}+|y|_{k+2}+|\bm p|_{k+1}+|z|_{k+2}),\\
	\|z-z_h\|_{\mathcal T_h}&\lesssim h^{k+2}(|\bm q|_{k+1}+|y|_{k+2}+|\bm p|_{k+1}+|z|_{k+2}),\\
	\|u-u_h\|_{\mathcal T_h}&\lesssim h^{k+2}(|\bm q|_{k+1}+|y|_{k+2}+|\bm p|_{k+1}+|z|_{k+2}).
	\end{align*}
\end{theorem}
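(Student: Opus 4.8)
The plan is to decouple the coupled optimality system into two single convection--diffusion equations, handle each by the standard EDG machinery, and then close the loop on the control error using the strong convexity guaranteed by $\gamma>0$. First I would introduce the \emph{auxiliary} EDG approximations $(\bm q_h(u),y_h(u),\widehat y_h^o(u))$, obtained by solving the discrete state system \eqref{EDG_discrete2_a}--\eqref{EDG_discrete2_e} with the \emph{exact} optimal control $u$ in place of $u_h$, and $(\bm p_h(y),z_h(y),\widehat z_h^o(y))$, obtained by solving the discrete adjoint system with the \emph{exact} state $y$ in place of $y_h$. Each auxiliary problem is a single EDG discretization of a convection--diffusion equation with the lower--order flux space of \cite{ZhangXieZhang17}, and assumptions \textbf{(A1)}--\textbf{(A2)} together with \eqref{beta_con} render the associated bilinear forms coercive. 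Adapting the projection--based energy analysis of \cite{ZhangXieZhang17} to the convection operator (whose contributions are absorbed using \eqref{beta_con} and \eqref{eqn:tau1_condition}), supplemented by an Aubin--Nitsche duality argument that uses the $H^2$ elliptic regularity of the convex domain $\Omega$, yields the single--equation rates $\|\bm q-\bm q_h(u)\|_{\mathcal T_h}\lesssim h^{k+1}(|\bm q|_{k+1}+|y|_{k+2})$ and $\|y-y_h(u)\|_{\mathcal T_h}\lesssim h^{k+2}(|\bm q|_{k+1}+|y|_{k+2})$, with the analogous bounds for the adjoint pair.

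Next I would estimate the differences $y_h(u)-y_h$ and $z_h(y)-z_h$ between the auxiliary and the genuine discrete solutions. Since the only change in the data is $u\mapsto u_h$ in the state equation and $y\mapsto y_h$ in the adjoint equation, these differences solve \emph{homogeneous} EDG systems driven solely by $u-u_h$ and $y-y_h$, respectively. Testing each system against its own unknowns and invoking the positivity in \eqref{eqn:tau1_condition} gives the stability bounds $\|y_h(u)-y_h\|_{\mathcal T_h}\lesssim\|u-u_h\|_{\mathcal T_h}$ and $\|z_h(y)-z_h\|_{\mathcal T_h}\lesssim\|y-y_h\|_{\mathcal T_h}$, along with control of the corresponding fluxes and trace jumps. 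Combined with the triangle inequality, the entire estimate is reduced to bounding the single scalar quantity $\|u-u_h\|_{\mathcal T_h}$.

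The main obstacle, and the crux of the argument, is to bound $\|u-u_h\|_{\mathcal T_h}$ without a circular dependence: through the optimality conditions \eqref{mixed_e} and \eqref{EDG_discrete2_g} the control error is tied to $z-z_h$, which depends on $y-y_h$, which in turn depends back on $u-u_h$. Here the commutativity OD $=$ DO is essential: assumption \textbf{(A1)} makes the discrete adjoint EDG operator the exact transpose of the discrete state EDG operator, so the discrete reduced functional inherits the $\gamma$--strong convexity of the continuous one. Subtracting \eqref{EDG_discrete2_g} from \eqref{mixed_e} gives $\gamma(u-u_h)=-(z-z_h)$, whence
\[
\gamma\|u-u_h\|_{\mathcal T_h}^2=-(z-z_h,u-u_h)_{\mathcal T_h}.
\]
Inserting the auxiliary solutions and exploiting the exact discrete duality between the state and adjoint systems, the pairing splits into a quadratic term with a favorable sign (to be absorbed) plus cross terms controlled entirely by the single--equation auxiliary errors $\|y-y_h(u)\|_{\mathcal T_h}$ and $\|z-z_h(y)\|_{\mathcal T_h}$ already bounded in the first paragraph. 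This yields $\|u-u_h\|_{\mathcal T_h}\lesssim h^{k+2}(|\bm q|_{k+1}+|y|_{k+2}+|\bm p|_{k+1}+|z|_{k+2})$.

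Finally I would back--substitute: the control estimate feeds into the stability bounds of the second paragraph, producing $\|y-y_h\|_{\mathcal T_h},\|z-z_h\|_{\mathcal T_h}\lesssim h^{k+2}(\cdots)$ and, through the accompanying flux bounds, $\|\bm q-\bm q_h\|_{\mathcal T_h},\|\bm p-\bm p_h\|_{\mathcal T_h}\lesssim h^{k+1}(\cdots)$, which are exactly the five stated estimates. I expect the delicate points to be the sign bookkeeping of the convection terms in the duality/coercivity estimates, where \eqref{beta_con} and the stabilization choice \textbf{(A1)}--\textbf{(A2)} must be used in concert, and the verification that the cross terms in the control estimate genuinely reduce to the higher--order auxiliary errors rather than reintroducing $\|u-u_h\|_{\mathcal T_h}$ at leading order.
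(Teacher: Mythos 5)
Your proposal is correct and follows essentially the same strategy as the paper's proof: an auxiliary EDG problem driven by the exact control $u$, single-equation energy and Aubin--Nitsche duality estimates for optimal auxiliary rates, the discrete duality/commutativity lemma guaranteed by \textbf{(A1)} to produce the strongly convex control-error identity with the quadratic term $\|y_h(u)-y_h\|_{\mathcal T_h}^2$ appearing with a favorable sign, and triangle inequalities to conclude. The only inessential difference is that your auxiliary adjoint system is driven by the exact state $y$ rather than by the auxiliary discrete state $y_h(u)$ as in the paper, which merely moves the cross term $(y-y_h(u),\,y_h(u)-y_h)_{\mathcal T_h}$ out of the auxiliary-adjoint error equation and into the control estimate, where it is absorbed in the same way.
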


\subsection{Preliminary material}
\label{sec:Projectionoperator}
Next, we  introduce the standard $L^2$-orthogonal projection operators $\bm{\Pi}_V$ and $\Pi_W$ as follows:
\begin{subequations} \label{def_L2}
	\begin{align}
	(\bm \Pi_V \bm q, \bm r)_{K}&=(\bm q,\bm r)_{K} \quad \forall \bm r\in [\mathcal{ P}_k(K)]^d,\\
	(\Pi_W y,w)_{K}&=(y,w)_{K}\quad \forall w\in \mathcal{P}_{k+1}(K).
	\end{align}
\end{subequations}
%
We use the following well-known bounds:
\begin{subequations}\label{classical_ine}
	\begin{align}
	\norm {\bm q -\bm\Pi_V \bm q}_{\mathcal T_h} &\le  Ch^{k+1} \norm{\bm q}_{k+1,\Omega}, \ \, \norm {y -{\Pi_W y}}_{\mathcal T_h} \le C h^{k+2} \norm{y}_{k+2,\Omega},\\
	\norm {y -{\Pi_W y}}_{\partial\mathcal T_h} &\le C  h^{k+\frac 3 2} \norm{y}_{k+2,\Omega},
	\  \norm {\bm q -\bm\Pi_V \bm q}_{\partial \mathcal T_h} \le C h^{k+\frac 12} \norm{\bm q}_{k+1,\Omega},\\
	\norm {y -{ \mathcal I_h y}}_{\partial\mathcal T_h} &\le C  h^{k+\frac 3 2} \norm{y}_{k+2,\Omega}, \  \norm {w}_{\partial \mathcal T_h} \le C h^{-\frac 12} \norm {w}_{ \mathcal T_h}, \forall w\in W_h,
	\end{align}
\end{subequations}
where $\mathcal I_h $ is the continuous interpolation operator introduced earlier.

We define the following EDG operators $ \mathscr B_1$ and $ \mathscr B_2 $.
\begin{align}
\hspace{3em}&\hspace{-3em} \mathscr  B_1( \bm q_h,y_h,\widehat y_h^o;\bm r_1,w_1,\mu_1) \nonumber\\
&=(\bm q_h,\bm r_1)_{\mathcal T_h}-( y_h,\nabla\cdot\bm r_1)_{\mathcal T_h}+\langle \widehat y_h^o,\bm r_1\cdot\bm n\rangle_{\partial\mathcal T_h\backslash \varepsilon_h^\partial}\nonumber\\
&\quad-(\bm q_h+\bm \beta y_h,  \nabla w_1)_{\mathcal T_h}-(\nabla\cdot\bm\beta y_h,w_1)_{\mathcal T_h}\nonumber\\
&\quad+\langle {\bm q}_h\cdot\bm n +(h^{-1}+\tau_1)y_h, w_1\rangle_{\partial\mathcal T_h}+\langle (\bm\beta\cdot\bm n -h^{-1}-\tau_1) \widehat y_h^o,w_1\rangle_{\partial\mathcal T_h\backslash \varepsilon_h^\partial}\nonumber\\
&\quad -\langle  {\bm q}_h\cdot\bm n+\bm \beta\cdot\bm n\widehat y_h^o +(h^{-1}+\tau_1)(y_h-\widehat y_h^o), \mu_1\rangle_{\partial\mathcal T_h\backslash\varepsilon^{\partial}_h},\label{def_B1}\\
\hspace{3em}&\hspace{-3em} \mathscr B_2 (\bm p_h,z_h,\widehat z_h^o;\bm r_2, w_2,\mu_2)\nonumber\\
& =(\bm p_h,\bm r_2)_{\mathcal T_h}-( z_h,\nabla\cdot\bm r_2)_{\mathcal T_h}+\langle \widehat z_h^o,\bm r_2\cdot\bm n\rangle_{\partial\mathcal T_h\backslash\varepsilon_h^\partial}-(\bm p_h-\bm \beta z_h,  \nabla w_2)_{\mathcal T_h}\nonumber\\
&\quad+\langle {\bm p}_h\cdot\bm n +(h^{-1}+\tau_2) z_h, w_2\rangle_{\partial\mathcal T_h} -\langle (\bm \beta\cdot\bm n + h^{-1}+\tau_2)\widehat z_h^o ,w_2\rangle_{\partial\mathcal T_h\backslash\varepsilon_h^\partial}\nonumber\\
&\quad-\langle  {\bm p}_h\cdot\bm n-\bm \beta\cdot\bm n\widehat z_h^o +(h^{-1}+\tau_2)(z_h-\widehat z_h^o), \mu_2\rangle_{\partial\mathcal T_h\backslash\varepsilon^{\partial}_h}\label{def_B2}.
\end{align}

By the definition of $\mathscr B_1$ and $\mathscr B_2$,  we can rewrite the EDG formulation of the optimality system \eqref{EDG_discrete2} as follows: find $({\bm{q}}_h,{\bm{p}}_h,y_h,z_h,u_h,\widehat y_h^o,\widehat z_h^o)\in \bm{V}_h\times\bm{V}_h\times W_h \times W_h\times W_h\times \widetilde{M}_h(o)\times \widetilde{M}_h(o)$  such that
\begin{subequations}\label{EDG_full_discrete}
	\begin{align}
	\mathscr B_1(\bm q_h,y_h,\widehat y_h^o;\bm r_1,w_1,\mu_1)&=( f+ u_h, w_1)_{\mathcal T_h}\ \nonumber\\
	&\quad-\langle \mathcal I_hg, (\bm\beta\cdot\bm n-\tau_1 - h^{-1})w_1+\bm r_1\cdot\bm n \rangle_{\varepsilon_h^\partial},\label{EDG_full_discrete_a}\\
	\mathscr B_2(\bm p_h,z_h,\widehat z_h^o;\bm r_2,w_2,\mu_2)&=(y_h-y_d,w_2)_{\mathcal T_h},\label{EDG_full_discrete_b}\\
	(z_h+\gamma u_h,w_3)_{\mathcal T_h}&=  0,\label{EDG_full_discrete_e}
	\end{align}
\end{subequations}
for all $\left(\bm{r}_1, \bm{r}_2,w_1,w_2,w_3,\mu_1,\mu_2\right)\in \bm{V}_h\times\bm{V}_h\times W_h \times W_h\times W_h\times \widetilde{M}_h(o)\times \widetilde{M}_h(o)$.

Next, we present two fundamental properties of the operators $\mathscr B_1$ and $\mathscr B_2$,  and show the EDG equations \eqref{EDG_full_discrete} have a unique solution.  The proofs of these results are similar to proofs in \cite{HuShenSinglerZhangZheng_HDG_Dirichlet_control2,HuShenSinglerZhangZheng_HDG_Dirichlet_control3} and are omitted.  We note that condition \textbf{(A1)} is used in the proof of \Cref{identical_equa}, which is fundamental to the error analysis in this work.
\begin{lemma}\label{property_B}
	For any $ ( \bm v_h, w_h, \mu_h ) \in \bm V_h \times W_h \times\widetilde  M_h $, we have
	\begin{align*}
	\hspace{2em}&\hspace{-2em} \mathscr B_1(\bm v_h,w_h,\mu_h;\bm v_h,w_h,\mu_h)\\
	&=(\bm v_h,\bm v_h)_{\mathcal T_h}+ \langle (h^{-1}+\tau_1 - \frac 12 \bm \beta\cdot\bm n)(w_h-\mu_h),w_h-\mu_h\rangle_{\partial\mathcal T_h\backslash \varepsilon_h^\partial}\\
	&\quad-\frac 1 2(\nabla\cdot\bm\beta w_h,w_h)_{\mathcal T_h} +\langle (h^{-1}+\tau_1-\frac12\bm \beta\cdot\bm n) w_h,w_h\rangle_{\varepsilon_h^\partial},\\
	\hspace{2em}&\hspace{-2em}\mathscr B_2(\bm v_h,w_h,\mu_h;\bm v_h,w_h,\mu_h)\\
	&=(\bm v_h,\bm v_h)_{\mathcal T_h}+ \langle (h^{-1}+\tau_2 + \frac 12 \bm \beta\cdot\bm n)(w_h-\mu_h),w_h-\mu_h\rangle_{\partial\mathcal T_h\backslash \varepsilon_h^\partial}\\
	&\quad-\frac 1 2(\nabla\cdot\bm\beta w_h,w_h)_{\mathcal T_h} +\langle (h^{-1}+\tau_2+\frac12\bm \beta\cdot\bm n) w_h,w_h\rangle_{\varepsilon_h^\partial}.
	\end{align*}
\end{lemma}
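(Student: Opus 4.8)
The plan is to substitute the diagonal arguments $(\bm v_h,w_h,\mu_h)$ into both the trial and test slots of the definition \eqref{def_B1} of $\mathscr B_1$ and then simplify by element-wise integration by parts, organizing the work so that the flux, the convection, and the skeleton stabilization are treated in turn. First I would dispose of every term containing the flux $\bm v_h$. The term $-(\bm v_h,\nabla w_h)_{\mathcal T_h}$ coming from $-(\bm q_h+\bm\beta y_h,\nabla w_1)_{\mathcal T_h}$ integrates by parts on each $K$ to $(\nabla\cdot\bm v_h,w_h)_{\mathcal T_h}-\langle\bm v_h\cdot\bm n,w_h\rangle_{\partial\mathcal T_h}$; its volume part cancels the term $-(w_h,\nabla\cdot\bm v_h)_{\mathcal T_h}$, while its boundary part cancels the $\langle\bm v_h\cdot\bm n,w_h\rangle_{\partial\mathcal T_h}$ contributed by the numerical trace term $\langle\bm q_h\cdot\bm n+(h^{-1}+\tau_1)y_h,w_1\rangle_{\partial\mathcal T_h}$. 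The two remaining flux–trace pairings, $\langle\mu_h,\bm v_h\cdot\bm n\rangle_{\partial\mathcal T_h\backslash\varepsilon_h^\partial}$ and $-\langle\bm v_h\cdot\bm n,\mu_h\rangle_{\partial\mathcal T_h\backslash\varepsilon_h^\partial}$, cancel identically. Hence the flux survives only as $(\bm v_h,\bm v_h)_{\mathcal T_h}$.

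Next I would treat the convection. Using $\bm\beta w_h\cdot\nabla w_h=\tfrac12\bm\beta\cdot\nabla(w_h^2)$ and integrating by parts gives $-(\bm\beta w_h,\nabla w_h)_{\mathcal T_h}=\tfrac12(\nabla\cdot\bm\beta\,w_h,w_h)_{\mathcal T_h}-\tfrac12\langle\bm\beta\cdot\bm n\,w_h,w_h\rangle_{\partial\mathcal T_h}$; adding the explicit term $-(\nabla\cdot\bm\beta\,w_h,w_h)_{\mathcal T_h}$ produces exactly the target volume term $-\tfrac12(\nabla\cdot\bm\beta\,w_h,w_h)_{\mathcal T_h}$ together with a surplus boundary term $-\tfrac12\langle\bm\beta\cdot\bm n\,w_h,w_h\rangle_{\partial\mathcal T_h}$. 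At this stage every remaining term lives on $\partial\mathcal T_h$, so I would split each of them into its part on the interior skeleton $\partial\mathcal T_h\backslash\varepsilon_h^\partial$ and its part on $\varepsilon_h^\partial$. The $\varepsilon_h^\partial$ contributions, namely $\langle(h^{-1}+\tau_1)w_h,w_h\rangle_{\varepsilon_h^\partial}$ from the stabilization and $-\tfrac12\langle\bm\beta\cdot\bm n\,w_h,w_h\rangle_{\varepsilon_h^\partial}$ from the convection surplus, assemble directly into $\langle(h^{-1}+\tau_1-\tfrac12\bm\beta\cdot\bm n)w_h,w_h\rangle_{\varepsilon_h^\partial}$, which is the last term of the claimed identity.

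It then remains to show the interior-skeleton terms collapse to $\langle(h^{-1}+\tau_1-\tfrac12\bm\beta\cdot\bm n)(w_h-\mu_h),w_h-\mu_h\rangle_{\partial\mathcal T_h\backslash\varepsilon_h^\partial}$. Writing $c=h^{-1}+\tau_1$ and $b=\bm\beta\cdot\bm n$, the surviving integrand is a quadratic form in $(w_h,\mu_h)$ whose $w_h^2$ and $w_h\mu_h$ coefficients already agree with those of $(c-\tfrac12 b)(w_h-\mu_h)^2$; the sole discrepancy is the term $-\tfrac12\langle\bm\beta\cdot\bm n\,\mu_h,\mu_h\rangle_{\partial\mathcal T_h\backslash\varepsilon_h^\partial}$. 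This is precisely where the nature of the trace space enters: since $\mu_h$ takes a single value on each interior face while the two adjacent elements contribute opposite outward normals, one has $\langle\bm\beta\cdot\bm n\,\mu_h,\mu_h\rangle_{\partial\mathcal T_h\backslash\varepsilon_h^\partial}=0$, so the offending term disappears and the square is completed.

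I expect this normal-cancellation step to be the main (if brief) subtlety; note it uses only single-valuedness on faces, not the continuity of $\widetilde M_h$, and everything else is bookkeeping through integration by parts. The identity for $\mathscr B_2$ then follows by the same computation: the adjoint convection enters as $+(\bm\beta z_h,\nabla w_h)_{\mathcal T_h}$, which flips the sign of the boundary convection surplus to $+\tfrac12\bm\beta\cdot\bm n$ and, after again discarding $\langle\bm\beta\cdot\bm n\,\mu_h,\mu_h\rangle_{\partial\mathcal T_h\backslash\varepsilon_h^\partial}=0$, yields the $+\tfrac12\bm\beta\cdot\bm n$ and $\tau_2$ appearing in the second identity.
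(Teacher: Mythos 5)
Your proof is correct: the element-wise integration by parts eliminates all flux--state coupling terms, the convection identity $\bm\beta w_h\cdot\nabla w_h=\tfrac12\bm\beta\cdot\nabla(w_h^2)$ produces the $-\tfrac12(\nabla\cdot\bm\beta\,w_h,w_h)_{\mathcal T_h}$ term, and the key cancellation $\langle\bm\beta\cdot\bm n\,\mu_h,\mu_h\rangle_{\partial\mathcal T_h\backslash\varepsilon_h^\partial}=0$ from single-valuedness of $\mu_h$ against opposite normals is exactly what completes the square (I verified the quadratic-form bookkeeping for both $\mathscr B_1$ and $\mathscr B_2$). The paper itself omits this proof, deferring to its HDG references, and your argument is essentially the same standard energy-identity computation used there, so there is nothing to flag.
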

\begin{lemma}\label{identical_equa}
	The EDG operators satisfy
	$$\mathscr B_1 (\bm q_h,y_h,\widehat y_h^o;\bm p_h,-z_h,-\widehat z_h^o) + \mathscr B_2 (\bm p_h,z_h,\widehat z_h^o;-\bm q_h,y_h,\widehat y_h^o) = 0.$$
\end{lemma}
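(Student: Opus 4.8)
The plan is to substitute the indicated arguments directly into the definitions \eqref{def_B1} and \eqref{def_B2}, carefully tracking the signs coming from the negated test functions $-z_h$, $-\widehat z_h^o$, $-\bm q_h$, and then to show that every surviving term cancels. First I would note that the two volume terms $(\bm q_h,\bm p_h)_{\mathcal T_h}$ and $-(\bm p_h,\bm q_h)_{\mathcal T_h}$ cancel immediately. The heart of the computation is to eliminate the remaining volume integrals by converting them to face integrals. I would integrate by parts element-by-element to turn $-(y_h,\nabla\cdot\bm p_h)_{\mathcal T_h}$ and $(z_h,\nabla\cdot\bm q_h)_{\mathcal T_h}$ into gradient plus boundary contributions, and I would combine the three convection volume terms $((\nabla\cdot\bm\beta)\,y_h,z_h)_{\mathcal T_h}$, $(\bm\beta z_h,\nabla y_h)_{\mathcal T_h}$, $(\bm\beta y_h,\nabla z_h)_{\mathcal T_h}$ via the product rule for $\nabla\cdot(\bm\beta y_h z_h)$ to reduce them to the single face term $\langle \bm\beta\cdot\bm n\,y_h,z_h\rangle_{\partial\mathcal T_h}$. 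After this step all volume integrals are gone and only integrals over $\partial\mathcal T_h$ and $\partial\mathcal T_h\backslash\varepsilon_h^\partial$ remain.

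Next I would collect the face terms by type. The terms $\langle \bm q_h\cdot\bm n,z_h\rangle_{\partial\mathcal T_h}$ and $\langle \bm p_h\cdot\bm n,y_h\rangle_{\partial\mathcal T_h}$ produced by the integration by parts cancel against their counterparts already present in $\mathscr B_1$ and $\mathscr B_2$, and likewise the interior-face flux-trace pairings $\langle \bm q_h\cdot\bm n,\widehat z_h^o\rangle$ and $\langle \bm p_h\cdot\bm n,\widehat y_h^o\rangle$ cancel. The state-state contributions on the full boundary $\partial\mathcal T_h$ collect into $\langle(\tau_2-\tau_1+\bm\beta\cdot\bm n)\,y_h,z_h\rangle_{\partial\mathcal T_h}$, which vanishes identically by assumption \textbf{(A1)}; this is the first place where \textbf{(A1)} enters.

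For the integrals that remain over the interior skeleton $\partial\mathcal T_h\backslash\varepsilon_h^\partial$ I would use \textbf{(A1)} a second time, in the form $h^{-1}+\tau_2=h^{-1}+\tau_1-\bm\beta\cdot\bm n$, and abbreviate $\sigma:=h^{-1}+\tau_1$. Grouping the $\sigma$-terms, each one pairs with a mirror term in which the roles of $(y_h,\widehat y_h^o)$ and $(z_h,\widehat z_h^o)$ are exchanged, so they all cancel by the symmetry $\langle a,b\rangle=\langle b,a\rangle$ of the face inner product; similarly two of the $\bm\beta\cdot\bm n$ terms cancel by symmetry. What survives is precisely the single trace-trace term $\langle \bm\beta\cdot\bm n\,\widehat y_h^o,\widehat z_h^o\rangle_{\partial\mathcal T_h\backslash\varepsilon_h^\partial}$, which I would show vanishes because $\widehat y_h^o,\widehat z_h^o\in\widetilde M_h(o)$ are single-valued on each interior face while the outward normal $\bm n$, and hence $\bm\beta\cdot\bm n$, takes opposite signs on the two elements sharing that face.

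The main obstacle is twofold. The first is purely organizational: the argument requires meticulous sign bookkeeping and constant attention to whether a given integral lives on the full boundary $\partial\mathcal T_h$ or on the interior skeleton $\partial\mathcal T_h\backslash\varepsilon_h^\partial$, since it is easy to mismatch a term during the integration by parts. The second, more conceptual, difficulty is that the final residual $\langle \bm\beta\cdot\bm n\,\widehat y_h^o,\widehat z_h^o\rangle_{\partial\mathcal T_h\backslash\varepsilon_h^\partial}$ does \emph{not} vanish pointwise; it vanishes only after summing over elements, precisely because the numerical trace is continuous across interior faces. Recognizing that this single-valuedness of the trace space, together with \textbf{(A1)}, is exactly what forces the identity is the key point of the proof.
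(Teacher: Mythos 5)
Your proof is correct and is essentially the argument the paper intends: the paper omits the proof (deferring to the analogous HDG computations in the cited works), and that argument is exactly your direct substitution with elementwise integration by parts, using \textbf{(A1)} twice --- once to annihilate $\langle(\tau_2-\tau_1+\bm\beta\cdot\bm n)\,y_h,z_h\rangle_{\partial\mathcal T_h}$ and once to align the interior penalty terms so they cancel pairwise --- and single-valuedness of the numerical traces to remove the last surviving term. One small sharpening: the residual $\langle\bm\beta\cdot\bm n\,\widehat y_h^o,\widehat z_h^o\rangle_{\partial\mathcal T_h\backslash\varepsilon_h^\partial}$ vanishes because functions in the trace space are single-valued on each interior face while $\bm n$ flips sign between the two adjacent elements; the \emph{continuity} of $\widetilde M_h(o)$ is not actually needed for this step, which is precisely why the identical lemma also holds for the discontinuous HDG trace space in the works the paper cites.
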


\begin{proposition}\label{ex_uni}
	There exists a unique solution of the EDG equations \eqref{EDG_full_discrete}.
\end{proposition}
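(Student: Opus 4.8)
The plan is to reduce the claim to uniqueness. Since \eqref{EDG_full_discrete} is a square linear system---matching each trial space to its test space, the unknowns $(\bm q_h,\bm p_h,y_h,z_h,u_h,\widehat y_h^o,\widehat z_h^o)$ and the equations both number $2\dim\bm V_h+3\dim W_h+2\dim\widetilde M_h(o)$---existence is equivalent to uniqueness. Thus I would set the data to zero ($f=0$, $g=0$, $y_d=0$) and show the only solution is trivial.

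First I would couple the primal and adjoint equations. Taking $(\bm r_1,w_1,\mu_1)=(\bm p_h,-z_h,-\widehat z_h^o)$ in the homogeneous \eqref{EDG_full_discrete_a} and $(\bm r_2,w_2,\mu_2)=(-\bm q_h,y_h,\widehat y_h^o)$ in the homogeneous \eqref{EDG_full_discrete_b}, then adding, the left-hand sides cancel by \Cref{identical_equa}, leaving $\|y_h\|_{\mathcal T_h}^2=(u_h,z_h)_{\mathcal T_h}$. Next I would use the discrete optimality condition \eqref{EDG_full_discrete_e}: testing with $w_3=u_h$ gives $(u_h,z_h)_{\mathcal T_h}=-\gamma\|u_h\|_{\mathcal T_h}^2$, so that $\|y_h\|_{\mathcal T_h}^2=-\gamma\|u_h\|_{\mathcal T_h}^2$. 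Since $\gamma>0$ forces a nonnegative quantity to equal a nonpositive one, both vanish, hence $y_h=0$ and $u_h=0$. Testing \eqref{EDG_full_discrete_e} once more with $w_3=z_h$ and $u_h=0$ then yields $z_h=0$.

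It remains to recover the fluxes and interior traces, for which I would invoke the energy identities of \Cref{property_B}. Choosing $(\bm r_1,w_1,\mu_1)=(\bm q_h,y_h,\widehat y_h^o)$ in \eqref{EDG_full_discrete_a}, the right-hand side is $(u_h,y_h)_{\mathcal T_h}=0$, while the left-hand side equals $\|\bm q_h\|_{\mathcal T_h}^2$ plus boundary and volume terms that, after substituting $y_h=0$, collapse to $\langle(h^{-1}+\tau_1-\tfrac12\bm\beta\cdot\bm n)\widehat y_h^o,\widehat y_h^o\rangle_{\partial\mathcal T_h\backslash\varepsilon_h^\partial}$; by \textbf{(A2)} this term is nonnegative, so $\bm q_h=0$ and $\widehat y_h^o=0$. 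The symmetric choice $(\bm r_2,w_2,\mu_2)=(\bm p_h,z_h,\widehat z_h^o)$ in \eqref{EDG_full_discrete_b}, together with $z_h=0$ and the positivity \eqref{eqn:tau1_condition}, gives $\bm p_h=0$ and $\widehat z_h^o=0$. This exhausts all unknowns and proves uniqueness, hence existence.

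The main obstacle I anticipate is orchestrating the coupling so that it closes. The algebraic cancellation in \Cref{identical_equa}, which rests on assumption \textbf{(A1)} (baked into the $\tau_2$ appearing in $\mathscr B_2$), is exactly what converts the two vector state equations into the single scalar relation $\|y_h\|_{\mathcal T_h}^2=(u_h,z_h)_{\mathcal T_h}$, and only in combination with \eqref{EDG_full_discrete_e} and the sign $\gamma>0$ does this pin down $y_h=u_h=z_h=0$. Everything afterward is a routine coercivity argument, but the sign bookkeeping in the boundary terms---ensuring $h^{-1}+\tau_1-\tfrac12\bm\beta\cdot\bm n>0$ and $h^{-1}+\tau_2+\tfrac12\bm\beta\cdot\bm n>0$ via \textbf{(A2)} and \eqref{eqn:tau1_condition}, together with $\nabla\cdot\bm\beta\le0$ from \eqref{beta_con} controlling the volume contributions---must be tracked carefully for the energy identities to be genuinely definite.
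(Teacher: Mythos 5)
Your proof is correct and follows essentially the same route the paper intends: reduce the square linear system to uniqueness, use \Cref{identical_equa} together with the discrete optimality condition to force $y_h=u_h=z_h=0$, and then apply the energy identities of \Cref{property_B} with the sign conditions \textbf{(A2)} and \eqref{eqn:tau1_condition} to kill the fluxes and traces. This is precisely the argument from the cited HDG works that the paper omits, so there is nothing to add.
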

%
%

\subsection{Proof of Main Result}
To prove the convergence result, we split the proof into
six steps.  We first consider the following auxiliary problem: find $$({\bm{q}}_h(u),{\bm{p}}_h(u), y_h(u), z_h(u), {\widehat{y}}_h^o(u), {\widehat{z}}_h^o(u))\in \bm{V}_h\times\bm{V}_h\times W_h \times W_h\times \widetilde{M}_h(o)\times \widetilde{M}_h(o)$$ such that
\begin{subequations}\label{EDG_inter_u}
	\begin{align}
	\mathscr B_1(\bm q_h(u),y_h(u),\widehat{y}_h^o(u);\bm r_1, w_1,\mu_1)&=( f+ u,w_1)_{\mathcal T_h} \ \nonumber\\
	& \quad- \langle \mathcal I_hg, (\bm\beta\cdot\bm n-\tau_1 -h^{-1})w_1+\bm r_1\cdot\bm n \rangle_{\varepsilon_h^\partial},\label{EDG_u_a} \\
	\mathscr B_2(\bm p_h(u),z_h(u),\widehat{z}_h^o(u);\bm r_2, w_2,\mu_2)&=(y_h(u) - y_d, w_2)_{\mathcal T_h},\label{EDG_u_b}
	\end{align}
\end{subequations}
for all $\left(\bm{r}_1, \bm{r}_2,w_1,w_2,\mu_1,\mu_2\right)\in \bm{V}_h\times\bm{V}_h \times W_h\times W_h\times \widetilde{M}_h(o)\times \widetilde{M}_h(o)$.  

In Steps 1-3, we focus on the primary variables, i.e., the state $ y $ and the flux $ \bm{q} $, and we use the following notation:
\begin{equation}\label{notation}
\begin{split}
\delta^{\bm q} &=\bm q-{\bm\Pi}_V\bm q,  \qquad\qquad\quad \qquad\qquad\qquad\;\;\;\;\varepsilon^{\bm q}_h={\bm\Pi}_V \bm q-\bm q_h(u),\\
\delta^y&=y- \Pi_W y, \quad\qquad\qquad \qquad\qquad\qquad\;\;\;\; \;\varepsilon^{y}_h=\Pi_W y-y_h(u),\\
\delta^{\widehat y} &= y-\mathcal I_h y,  \qquad\qquad\qquad\quad\qquad\qquad\qquad  \varepsilon^{\widehat y}_h=\mathcal I_h y-\widehat{y}_h(u),\\
\widehat {\bm\delta}_1 &= \delta^{\bm q}\cdot\bm n+\bm \beta\cdot \bm n\delta^{\widehat{y}}+(\tau_1+h^{-1})  (\delta^y-\delta^{\widehat{y}}), 
\end{split}
\end{equation}
where $\widehat y_h(u) = \widehat y_h^o(u)$ on $\varepsilon_h^o$ and $\widehat y_h(u) = \mathcal I_h g$ on $\varepsilon_h^{\partial}$, which implies $\varepsilon_h^{\widehat y} = 0$ on $\varepsilon_h^{\partial}$.

\subsubsection{Step 1: The error equation for part 1 of the auxiliary problem \eqref{EDG_u_a}.} 
\begin{lemma} \label{lemma_error_y}
	We have the following error equation
	\begin{align} \label{error_y}
	\hspace{1em}\hspace{-1em} \mathscr B_1(\varepsilon^{\bm q}_h,\varepsilon^y_h,\varepsilon^{\widehat{y}}_h;\bm r_1, w_1,\mu_1) &=-\langle \delta^{\widehat{y}},\bm r_1\cdot\bm n \rangle_{\partial \mathcal{T}_h}+(\bm \beta\delta^y,\nabla w_1)_{\mathcal{T}_h}+(\nabla\cdot  \bm{\beta} \delta^y, w_1)_{\mathcal T_h}\nonumber\\
	&\quad-\langle\widehat{\bm \delta}_1,w_1\rangle_{\partial \mathcal{T}_h}+\langle \widehat{\bm \delta}_1,\mu_1 \rangle_{\partial \mathcal{T}_h\backslash\varepsilon_h^\partial}.
	\end{align}
\end{lemma}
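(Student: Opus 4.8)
The plan is to derive \eqref{error_y} by combining the linearity of $\mathscr B_1$ in its first three arguments with a consistency/Galerkin-orthogonality argument. Since $\varepsilon_h^{\bm q}=\bm\Pi_V\bm q-\bm q_h(u)$, $\varepsilon_h^y=\Pi_W y-y_h(u)$, $\varepsilon_h^{\widehat y}=\mathcal I_h y-\widehat y_h(u)$, linearity gives
\[
\mathscr B_1(\varepsilon_h^{\bm q},\varepsilon_h^y,\varepsilon_h^{\widehat y};\bm r_1,w_1,\mu_1)=\mathscr B_1(\bm\Pi_V\bm q,\Pi_W y,\mathcal I_h y;\bm r_1,w_1,\mu_1)-\mathscr B_1(\bm q_h(u),y_h(u),\widehat y_h^o(u);\bm r_1,w_1,\mu_1).
\]
The second term on the right is exactly the left-hand side of \eqref{EDG_u_a}, hence equals the data $(f+u,w_1)_{\mathcal T_h}-\langle \mathcal I_hg,(\bm\beta\cdot\bm n-\tau_1-h^{-1})w_1+\bm r_1\cdot\bm n\rangle_{\varepsilon_h^\partial}$. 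For the first term I would split $\bm\Pi_V\bm q=\bm q-\delta^{\bm q}$, $\Pi_W y=y-\delta^y$, $\mathcal I_h y=y-\delta^{\widehat y}$ and apply linearity once more to write it as $\mathscr B_1(\bm q,y,y;\cdot)-\mathscr B_1(\delta^{\bm q},\delta^y,\delta^{\widehat y};\cdot)$. The whole computation then reduces to a consistency identity for the exact solution plus a projection-error term that collapses by orthogonality.

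First I would establish consistency by inserting the exact solution $(\bm q,y,y)$, with $\bm q=-\nabla y$ and single-valued trace $y$, into \eqref{def_B1} and integrating by parts element-by-element. The relation $\bm q=-\nabla y$ turns the first line into $-\langle y,\bm r_1\cdot\bm n\rangle_{\varepsilon_h^\partial}=-\langle g,\bm r_1\cdot\bm n\rangle_{\varepsilon_h^\partial}$, while the state equation written as $\nabla\cdot(\bm q+\bm\beta y)-(\nabla\cdot\bm\beta)y=f+u$ converts the volume terms into $(f+u,w_1)_{\mathcal T_h}$ plus interface contributions in which the $\bm q\cdot\bm n$ stabilization cancels and all interior trace terms telescope, leaving only $\langle(h^{-1}+\tau_1-\bm\beta\cdot\bm n)g,w_1\rangle_{\varepsilon_h^\partial}$. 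The $\mu_1$-slot vanishes because $\bm q+\bm\beta y\in H(\mathrm{div},\Omega)$ has continuous normal component across interior faces and $\mu_1$ is single-valued. Subtracting the discrete data then cancels $(f+u,w_1)_{\mathcal T_h}$ and replaces $g$ by $g-\mathcal I_h g=\delta^{\widehat y}$ on $\varepsilon_h^\partial$, producing the boundary terms $-\langle\delta^{\widehat y},\bm r_1\cdot\bm n\rangle_{\varepsilon_h^\partial}+\langle(h^{-1}+\tau_1-\bm\beta\cdot\bm n)\delta^{\widehat y},w_1\rangle_{\varepsilon_h^\partial}$.

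Next I would expand $\mathscr B_1(\delta^{\bm q},\delta^y,\delta^{\widehat y};\cdot)$ from \eqref{def_B1} and discard every volume term killed by the defining orthogonality \eqref{def_L2}: $(\delta^{\bm q},\bm r_1)_{\mathcal T_h}=0$ and $(\delta^{\bm q},\nabla w_1)_{\mathcal T_h}=0$ because $\bm r_1,\nabla w_1\in[\mathcal P^k]^d$, and $(\delta^y,\nabla\cdot\bm r_1)_{\mathcal T_h}=0$ because $\nabla\cdot\bm r_1\in\mathcal P^k\subset\mathcal P^{k+1}$. What survives are $-(\bm\beta\delta^y,\nabla w_1)_{\mathcal T_h}-(\nabla\cdot\bm\beta\,\delta^y,w_1)_{\mathcal T_h}$, the trace term $\langle\delta^{\widehat y},\bm r_1\cdot\bm n\rangle_{\partial\mathcal T_h\backslash\varepsilon_h^\partial}$, and the interface terms; on interior faces the combination $\delta^{\bm q}\cdot\bm n+(h^{-1}+\tau_1)\delta^y+(\bm\beta\cdot\bm n-h^{-1}-\tau_1)\delta^{\widehat y}$ collapses exactly to $\widehat{\bm\delta}_1$ by its definition in \eqref{notation}, while the $\mu_1$ term is already $-\langle\widehat{\bm\delta}_1,\mu_1\rangle_{\partial\mathcal T_h\backslash\varepsilon_h^\partial}$. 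Adding the boundary contributions from the consistency step, I would check that the leftover pieces on $\varepsilon_h^\partial$ combine into $-\langle\widehat{\bm\delta}_1,w_1\rangle_{\varepsilon_h^\partial}$ (using $\widehat{\bm\delta}_1|_{\varepsilon_h^\partial}=\delta^{\bm q}\cdot\bm n+\bm\beta\cdot\bm n\,\delta^{\widehat y}+(h^{-1}+\tau_1)(\delta^y-\delta^{\widehat y})$) and that the $\bm r_1\cdot\bm n$ terms merge to $-\langle\delta^{\widehat y},\bm r_1\cdot\bm n\rangle_{\partial\mathcal T_h}$, yielding precisely \eqref{error_y}.

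The main obstacle is essentially bookkeeping: carefully tracking which terms live on $\partial\mathcal T_h$, on the interior faces $\partial\mathcal T_h\backslash\varepsilon_h^\partial$, and on the boundary $\varepsilon_h^\partial$, and verifying that the boundary pieces generated by the $g$-versus-$\mathcal I_h g$ mismatch in the consistency step combine with the surviving interface terms to extend the $\widehat{\bm\delta}_1$ contributions from interior faces to all of $\partial\mathcal T_h$. The one genuinely analytic ingredient is the use of $\bm q+\bm\beta y\in H(\mathrm{div},\Omega)$ to annihilate the $\mu_1$-slot in the consistency step; everything else is algebraic rearrangement together with the orthogonality of the $L^2$ projections $\bm\Pi_V$ and $\Pi_W$.
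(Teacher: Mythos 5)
Your proposal is correct and follows essentially the same route as the paper: both compare $\mathscr B_1$ evaluated at the projections $(\bm\Pi_V\bm q,\Pi_W y,\mathcal I_h y)$ against the auxiliary discrete equation \eqref{EDG_u_a}, using the $L^2$-orthogonality of $\bm\Pi_V$ and $\Pi_W$ together with the element-wise consistency of the exact solution (including the vanishing of the $\mu_1$-slot via continuity of $(\bm q+\bm\beta y)\cdot\bm n$), with your explicit three-way linearity split being only a cosmetic reorganization of the paper's in-line substitution.
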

\begin{proof}
	By definition of the operator $\mathscr B_1$ in \eqref{def_B1}, we have
	\begin{align*}
	\hspace{1em}&\hspace{-1em}  \mathscr  B_1( \bm \Pi_V \bm q,\Pi_W y,\mathcal I_h y;\bm r_1,w_1,\mu_1) \\
	&=( \bm \Pi_V \bm q,\bm r_1)_{\mathcal T_h}-( \Pi_W y,\nabla\cdot\bm r_1)_{\mathcal T_h}+\langle \mathcal I_h y,\bm r_1\cdot\bm n\rangle_{\partial\mathcal T_h\backslash \varepsilon_h^\partial} \\
	&\quad-( \bm \Pi_V \bm q+\bm \beta \Pi_W y,  \nabla w_1)_{\mathcal T_h} 	- (\nabla\cdot\bm{\beta} \Pi y,  w_1)_{{\mathcal{T}_h}} \\
	&\quad +\langle  \bm \Pi_V \bm q\cdot\bm n +(\tau_1+h^{-1}) \Pi_W y,w_1\rangle_{\partial\mathcal T_h} +\langle (\bm\beta\cdot\bm n -\tau_1 - h^{-1}) \mathcal I_h y,w_1\rangle_{\partial\mathcal T_h\backslash \varepsilon_h^\partial}\\
	&\quad -\langle   \bm \Pi_V \bm q\cdot\bm n+\bm \beta\cdot\bm n \mathcal I_h y +(\tau_1 + h^{-1})(\Pi_W y-\mathcal I_h y),\mu_1\rangle_{\partial\mathcal T_h\backslash\varepsilon^{\partial}_h}.
	\end{align*}
	Using properties of the $L^2$-orthogonal projection operators \eqref{def_L2} gives
	\begin{align*}
	\hspace{1em}&\hspace{-1em}  \mathscr  B_1( \bm \Pi_V \bm q,\Pi_W y, \mathcal I_h y;\bm r_1,w_1,\mu_1) \\
	&=( \bm q,\bm r_1)_{\mathcal T_h}-( y,\nabla\cdot\bm r_1)_{\mathcal T_h}+\langle  y,\bm r_1\cdot\bm n\rangle_{\partial\mathcal T_h\backslash \varepsilon_h^\partial}-\langle  \delta^{\widehat{y}},\bm r_1\cdot\bm n\rangle_{\partial\mathcal T_h\backslash \varepsilon_h^\partial} \\
	&\quad-( \bm q+\bm \beta y,  \nabla w_1)_{\mathcal T_h}+(\bm \beta \delta^y,\nabla w_1)_{\mathcal{T}_h} -(\nabla\cdot\bm{\beta} y,w_1)_{\mathcal T_h} + (\nabla\cdot\bm \beta \delta^y,w_1)_{\mathcal T_h} \\
	&\quad +\langle  \bm q\cdot\bm n +(\tau_1 +h^{-1})y,w_1\rangle_{\partial\mathcal T_h}-\langle  \delta^{\bm q}\cdot\bm n +(\tau_1 +h^{-1})  \delta^y,w_1\rangle_{\partial\mathcal T_h}\\
	&\quad+\langle (\bm\beta\cdot\bm n -\tau_1-h^{-1})  y,w_1\rangle_{\partial\mathcal T_h\backslash \varepsilon_h^\partial}-\langle (\bm\beta\cdot\bm n -\tau_1-h^{-1})  \delta^{\widehat{y}},w_1\rangle_{\partial\mathcal T_h\backslash \varepsilon_h^\partial}\\
	&\quad -\langle  \bm q\cdot\bm n+\bm \beta\cdot\bm n  y ,\mu_1\rangle_{\partial\mathcal T_h\backslash\varepsilon^{\partial}_h}\\
	&\quad+\langle \delta^{\bm q}\cdot\bm n+\bm \beta\cdot\bm n \delta^{\widehat{y}}+(\tau_1+h^{-1}) (\delta^y-\delta^{\widehat{y}}),\mu_1 \rangle_{\partial \mathcal{T}_h\backslash \varepsilon_h^\partial}.
	\end{align*}
	Note that the exact solution $\bm q$ and $y$ satisfies
	\begin{align*}
	(\bm q,\bm r_1)_{\mathcal{T}_h}-(y,\nabla \cdot \bm r_1)_{\mathcal{T}_h}+\langle y,\bm r_1\cdot \bm n \rangle_{\partial \mathcal{T}_h\backslash \varepsilon_h^\partial}&=-\langle  g,\bm r_1\cdot \bm n \rangle_{\varepsilon_h^\partial },\\
	-(\bm q+\bm \beta y,\nabla w_1)_{\mathcal{T}_h}-(\nabla\cdot\bm \beta y,w_1)_{\mathcal{T}_h}+\langle \bm q\cdot \bm n+\bm \beta\cdot \bm ny,w_1 \rangle_{\partial \mathcal{T}_h}&=(f+u,w_1)_{\mathcal{T}_h},\\
	-\langle \bm q\cdot \bm n+\bm \beta\cdot \bm n y,\mu_1 \rangle_{\partial \mathcal{T}_h\backslash \varepsilon_h^\partial}&=0,
	\end{align*}
	for all $(\bm r_1,w_1,\mu_1)\in \bm V_h\times W_h\times \widetilde{M}_h(o)$. Therefore, we have
	\begin{align*}
	\hspace{1em}&\hspace{-1em}  \mathscr  B_1( \bm \Pi_V \bm q,\Pi_W y,I_h y;\bm r_1,w_1,\mu_1)\\
	&= -\langle g,\bm r_1\cdot\bm n \rangle_{\varepsilon_h^\partial} -\langle  \delta^{\widehat{y}},\bm r_1\cdot\bm n\rangle_{\partial\mathcal T_h\backslash \varepsilon_h^\partial}+(\bm \beta\delta^y,\nabla w_1)_{\mathcal{T}_h}\\
	&\quad +(\nabla\cdot \bm \beta \delta^y,w_1)_{\mathcal{T}_h}+(f+u,w_1)_{\mathcal{T}_h}-\langle  \delta^{\bm q}\cdot\bm n ,w_1\rangle_{\partial\mathcal T_h}\\
	&\quad -\langle (\bm \beta\cdot \bm n-\tau_1-h^{-1}) y,w_1 \rangle_{\varepsilon_h^\partial}-\langle (\bm\beta\cdot\bm n -\tau_1-h^{-1})  \delta^{\widehat{y}},w_1\rangle_{\partial\mathcal T_h\backslash \varepsilon_h^\partial}\\
	&\quad +\langle \delta^{\bm q}\cdot\bm n+\bm \beta\cdot\bm n \delta^{\widehat{y}}+(\tau_1+h^{-1}) (\delta^y-\delta^{\widehat{y}}),\mu_1 \rangle_{\partial \mathcal{T}_h\backslash \varepsilon_h^\partial}.
	\end{align*}
	Finally, subtracting \eqref{EDG_u_a} from the above equation completes the proof.
\end{proof}

\subsubsection{Step 2: Estimate for $\varepsilon_h^q$ by an energy argument.}
\label{subsec:proof_step_2}
First, we give an auxiliary result that is very similar to a result from \cite{MR3440284}.  The proof is also very similar, and is omitted.
\begin{lemma} \label{error_nabla}
	We have 
	\begin{align}
	\| \nabla \varepsilon_h^y \|_{\mathcal{T}_h} \lesssim  \| \varepsilon_h^{\bm q} \|_{\mathcal{T}_h}+h^{-\frac{1}{2}} \| \varepsilon_h^y-\varepsilon_h^{\widehat{y}} \|_{\partial \mathcal{T}_h}.
	\end{align}
\end{lemma}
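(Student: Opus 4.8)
The plan is to run a standard HDG-type energy argument on the error equation of \Cref{lemma_error_y}, testing against the piecewise gradient of the state error. The essential structural observation is that, because the flux space is $\bm V_h=[\mathcal P^k]^d$ while the state space is $W_h=\mathcal P^{k+1}$, the piecewise gradient $\nabla\varepsilon_h^y$ of $\varepsilon_h^y\in W_h$ lies in $\bm V_h$ and is therefore an admissible test function $\bm r_1$. This is precisely the feature of the lower-order-flux EDG method that makes the clean estimate possible; with an equal-order flux space one would not have this exact match.

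First I would set $w_1=0$, $\mu_1=0$, and $\bm r_1=\nabla\varepsilon_h^y$ in \eqref{error_y}. On the left-hand side, the definition \eqref{def_B1} of $\mathscr B_1$ collapses to its first three terms, namely $(\varepsilon_h^{\bm q},\nabla\varepsilon_h^y)_{\mathcal T_h}-(\varepsilon_h^y,\nabla\cdot\nabla\varepsilon_h^y)_{\mathcal T_h}+\langle\varepsilon_h^{\widehat y},\nabla\varepsilon_h^y\cdot\bm n\rangle_{\partial\mathcal T_h\backslash\varepsilon_h^\partial}$, since every term carrying $w_1$ or $\mu_1$ drops. Integrating the middle term by parts elementwise produces $\|\nabla\varepsilon_h^y\|_{\mathcal T_h}^2$ together with a boundary contribution $-\langle\varepsilon_h^y,\nabla\varepsilon_h^y\cdot\bm n\rangle_{\partial\mathcal T_h}$; because $\varepsilon_h^{\widehat y}=0$ on $\varepsilon_h^\partial$ (as recorded after \eqref{notation}), the two face terms combine into $-\langle\varepsilon_h^y-\varepsilon_h^{\widehat y},\nabla\varepsilon_h^y\cdot\bm n\rangle_{\partial\mathcal T_h}$.

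Moving the non-energy terms to the right and inserting the right-hand side of \eqref{error_y} yields the identity
\[
\|\nabla\varepsilon_h^y\|_{\mathcal T_h}^2=-(\varepsilon_h^{\bm q},\nabla\varepsilon_h^y)_{\mathcal T_h}+\langle\varepsilon_h^y-\varepsilon_h^{\widehat y},\nabla\varepsilon_h^y\cdot\bm n\rangle_{\partial\mathcal T_h}-\langle\delta^{\widehat y},\nabla\varepsilon_h^y\cdot\bm n\rangle_{\partial\mathcal T_h}.
\]
I would then bound the volume term by Cauchy--Schwarz, $|(\varepsilon_h^{\bm q},\nabla\varepsilon_h^y)_{\mathcal T_h}|\le\|\varepsilon_h^{\bm q}\|_{\mathcal T_h}\|\nabla\varepsilon_h^y\|_{\mathcal T_h}$, and the face term by Cauchy--Schwarz on $\partial\mathcal T_h$ followed by the trace inverse inequality of \eqref{classical_ine}, which (being valid for any piecewise polynomial, applied to $\nabla\varepsilon_h^y$) gives $\|\nabla\varepsilon_h^y\cdot\bm n\|_{\partial\mathcal T_h}\lesssim h^{-\frac12}\|\nabla\varepsilon_h^y\|_{\mathcal T_h}$; hence $|\langle\varepsilon_h^y-\varepsilon_h^{\widehat y},\nabla\varepsilon_h^y\cdot\bm n\rangle_{\partial\mathcal T_h}|\lesssim h^{-\frac12}\|\varepsilon_h^y-\varepsilon_h^{\widehat y}\|_{\partial\mathcal T_h}\|\nabla\varepsilon_h^y\|_{\mathcal T_h}$. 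Dividing through by $\|\nabla\varepsilon_h^y\|_{\mathcal T_h}$ then produces the claimed bound.

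The step requiring the most care is the trace-defect term $\langle\delta^{\widehat y},\nabla\varepsilon_h^y\cdot\bm n\rangle_{\partial\mathcal T_h}$ coming from the right-hand side of the error equation, since it does not appear in the target estimate. I expect to show it either vanishes --- which happens face by face when the interpolation error $\delta^{\widehat y}$ is $L^2(e)$-orthogonal to $\mathcal P^k(e)\ni\nabla\varepsilon_h^y\cdot\bm n|_e$ --- or, absent exact orthogonality, is controlled by $h^{-\frac12}\|\delta^{\widehat y}\|_{\partial\mathcal T_h}\|\nabla\varepsilon_h^y\|_{\mathcal T_h}$ through the same inverse inequality, in which case the approximation bound $\|\delta^{\widehat y}\|_{\partial\mathcal T_h}\lesssim h^{k+\frac32}$ from \eqref{classical_ine} shows it is of the same (or higher) order than the retained flux and jump terms and may be absorbed into them. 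The remaining estimates are routine, so this consistency term is the only genuine obstacle.
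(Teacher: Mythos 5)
Your overall strategy --- testing the error equation \eqref{error_y} with $(\bm r_1,w_1,\mu_1)=(\nabla\varepsilon_h^y,0,0)$, which is admissible precisely because $\nabla W_h\subset\bm V_h$ for this lower-order-flux EDG method, then integrating by parts and finishing with Cauchy--Schwarz and the discrete trace inequality --- is exactly the argument the paper has in mind (it omits the proof, citing the analogous HDG lemma of \cite{MR3440284}), and your intermediate identity is correct. The gap is in the final step, which you yourself flag as the only real obstacle: neither of your two proposed resolutions of the term $\langle\delta^{\widehat y},\nabla\varepsilon_h^y\cdot\bm n\rangle_{\partial\mathcal T_h}$ actually works. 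The orthogonality option is unavailable here: $\delta^{\widehat y}=y-\mathcal I_h y$ where $\mathcal I_h$ is a \emph{continuous interpolation} operator, not a face-by-face $L^2$ projection, and in the EDG setting it cannot be replaced by one, since the face-wise $L^2$ projection of $y$ is discontinuous at face borders and hence does not belong to $\widetilde M_h$. This is precisely where the EDG analysis departs from the cited HDG reference, in which the trace space is discontinuous, the trace approximation is the face-wise $L^2$ projection $P_M$, and $\langle y-P_M y,\bm r_1\cdot\bm n\rangle_e=0$ because $\bm r_1\cdot\bm n|_e\in\mathcal P^k(e)\subset\mathcal P^{k+1}(e)$; for EDG that cancellation is structurally lost.

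Your fallback does not close the gap either: bounding the term by $h^{-1/2}\|\delta^{\widehat y}\|_{\partial\mathcal T_h}\|\nabla\varepsilon_h^y\|_{\mathcal T_h}$ is fine, but $\delta^{\widehat y}$ is a fixed approximation error of the exact solution, and there is no inequality that lets you ``absorb'' $h^{-1/2}\|\delta^{\widehat y}\|_{\partial\mathcal T_h}$ into $\|\varepsilon_h^{\bm q}\|_{\mathcal T_h}+h^{-1/2}\|\varepsilon_h^y-\varepsilon_h^{\widehat y}\|_{\partial\mathcal T_h}$; these are unrelated quantities. What your argument genuinely proves is the weaker statement
\[
\|\nabla\varepsilon_h^y\|_{\mathcal T_h}\lesssim\|\varepsilon_h^{\bm q}\|_{\mathcal T_h}+h^{-\frac12}\|\varepsilon_h^y-\varepsilon_h^{\widehat y}\|_{\partial\mathcal T_h}+h^{-\frac12}\|\delta^{\widehat y}\|_{\partial\mathcal T_h},
\]
and, as far as I can see, that is all that can be proved in this setting, so the defect lies as much in the lemma's statement as in your proof. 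The saving grace is that the weaker inequality suffices everywhere \Cref{error_nabla} is used: by \eqref{classical_ine} the extra term is $O(h^{k+1}|y|_{k+2})$, so inserting it into the bound for $T_2$ in the proof of \Cref{energy_norm_q} and into the bound for $T_8$ in the proof of \Cref{dual_y} leaves every stated convergence rate unchanged. The honest fix is therefore to add the $h^{-\frac12}\|\delta^{\widehat y}\|_{\partial\mathcal T_h}$ term to the right-hand side of the lemma, rather than to claim it vanishes or can be absorbed.
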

\begin{lemma} \label{energy_norm_q}
	We have
	\begin{align}
	\| \varepsilon_h^{\bm q} \|_{\mathcal{T}_h}+h^{-\frac{1}{2}} \| \varepsilon_h^y-\varepsilon_h^{\widehat{y}} \|_{\partial \mathcal{T}_h} \lesssim h^{k+1} (|\bm q|_{k+1}+|y|_{k+2}).
	\end{align}
\end{lemma}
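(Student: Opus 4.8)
The plan is to run the standard EDG energy argument: choose the test functions in the error equation of \Cref{lemma_error_y} to equal the error itself, and then exploit the coercivity identity of \Cref{property_B}. Concretely, I would set $(\bm r_1,w_1,\mu_1)=(\varepsilon_h^{\bm q},\varepsilon_h^y,\varepsilon_h^{\widehat y})$ in \eqref{error_y}. The left-hand side then becomes $\mathscr B_1(\varepsilon_h^{\bm q},\varepsilon_h^y,\varepsilon_h^{\widehat y};\varepsilon_h^{\bm q},\varepsilon_h^y,\varepsilon_h^{\widehat y})$, which \Cref{property_B} expands into a sum of an $L^2$ flux term $\|\varepsilon_h^{\bm q}\|_{\mathcal T_h}^2$, a face term weighted by $h^{-1}+\tau_1-\frac12\bm\beta\cdot\bm n$, a volume term $-\frac12(\nabla\cdot\bm\beta\,\varepsilon_h^y,\varepsilon_h^y)_{\mathcal T_h}$, and a boundary term on $\varepsilon_h^\partial$.

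Next I would establish a coercivity lower bound. Since $\nabla\cdot\bm\beta\le 0$ by \eqref{beta_con}, the volume term $-\frac12(\nabla\cdot\bm\beta\,\varepsilon_h^y,\varepsilon_h^y)_{\mathcal T_h}$ is nonnegative; by assumption \textbf{(A2)} the weight $\tau_1-\frac12\bm\beta\cdot\bm n$ is strictly positive, and $h^{-1}>0$, so the face and boundary terms are bounded below by $c\,h^{-1}\|\varepsilon_h^y-\varepsilon_h^{\widehat y}\|^2_{\partial\mathcal T_h}$ (recalling $\varepsilon_h^{\widehat y}=0$ on $\varepsilon_h^\partial$, so the $\varepsilon_h^\partial$ term completes the jump contribution there). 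Hence the left-hand side dominates $\|\varepsilon_h^{\bm q}\|_{\mathcal T_h}^2 + h^{-1}\|\varepsilon_h^y-\varepsilon_h^{\widehat y}\|^2_{\partial\mathcal T_h}$, which is comparable to the square of the target energy quantity $\mathcal E:=\|\varepsilon_h^{\bm q}\|_{\mathcal T_h}+h^{-1/2}\|\varepsilon_h^y-\varepsilon_h^{\widehat y}\|_{\partial\mathcal T_h}$.

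Then I would bound the right-hand side of \eqref{error_y} term by term, using the approximation bounds \eqref{classical_ine}, the inverse trace inequality $\|\cdot\|_{\partial\mathcal T_h}\lesssim h^{-1/2}\|\cdot\|_{\mathcal T_h}$ on polynomial spaces, and Cauchy--Schwarz. The term $-\langle\delta^{\widehat y},\varepsilon_h^{\bm q}\cdot\bm n\rangle_{\partial\mathcal T_h}$ gives $h^{k+1}|y|_{k+2}\|\varepsilon_h^{\bm q}\|_{\mathcal T_h}$. For the two interior terms I would use $\|\nabla\varepsilon_h^y\|_{\mathcal T_h}\lesssim\mathcal E$ from \Cref{error_nabla}, which yields even higher-order ($h^{k+2}$) contributions. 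Combining the two face terms into $-\langle\widehat{\bm\delta}_1,\varepsilon_h^y-\varepsilon_h^{\widehat y}\rangle_{\partial\mathcal T_h}$ (again using $\varepsilon_h^{\widehat y}=0$ on $\varepsilon_h^\partial$) and estimating $\|\widehat{\bm\delta}_1\|_{\partial\mathcal T_h}\lesssim h^{k+1/2}(|\bm q|_{k+1}+|y|_{k+2})$ from the definition of $\widehat{\bm\delta}_1$ in \eqref{notation} together with \eqref{classical_ine} produces the $|\bm q|_{k+1}$ contribution $h^{k+1}(|\bm q|_{k+1}+|y|_{k+2})\,\mathcal E$. Altogether the right-hand side is $\lesssim h^{k+1}(|\bm q|_{k+1}+|y|_{k+2})\,\mathcal E$; dividing by $\mathcal E$ gives the claim.

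The main obstacle is the zeroth-order term $(\nabla\cdot\bm\beta\,\delta^y,\varepsilon_h^y)_{\mathcal T_h}$: a direct Cauchy--Schwarz estimate introduces $\|\varepsilon_h^y\|_{\mathcal T_h}$, which is not directly controlled by $\mathcal E$ (the coercivity only controls the jumps $\varepsilon_h^y-\varepsilon_h^{\widehat y}$ on faces and, via \Cref{error_nabla}, the piecewise gradient). I would resolve this with a discrete Poincar\'e--Friedrichs inequality of the form $\|\varepsilon_h^y\|_{\mathcal T_h}\lesssim\|\nabla\varepsilon_h^y\|_{\mathcal T_h}+h^{-1/2}\|\varepsilon_h^y-\varepsilon_h^{\widehat y}\|_{\partial\mathcal T_h}\lesssim\mathcal E$, valid because $\varepsilon_h^{\widehat y}=0$ on $\partial\Omega$; since this term carries a full extra power of $h$, it is harmless once $\|\varepsilon_h^y\|_{\mathcal T_h}$ is tamed.
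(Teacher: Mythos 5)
Your proposal is correct, and its skeleton is identical to the paper's proof: the same test-function choice $(\varepsilon_h^{\bm q},\varepsilon_h^y,\varepsilon_h^{\widehat y})$ in \eqref{error_y}, the same coercivity identity from \Cref{property_B} (with the interior and boundary face terms combined via $\varepsilon_h^{\widehat y}=0$ on $\varepsilon_h^\partial$), and the same term-by-term estimates using \eqref{classical_ine}, the inverse trace inequality, and \Cref{error_nabla}. The one genuine difference is your treatment of the zeroth-order term $(\nabla\cdot\bm\beta\,\delta^y,\varepsilon_h^y)_{\mathcal T_h}$, which you correctly flag as the main obstacle. The paper does \emph{not} discard the volume term in the coercivity identity: since $\nabla\cdot\bm\beta\le 0$ by \eqref{beta_con}, it keeps $-\frac12(\nabla\cdot\bm\beta\,\varepsilon_h^y,\varepsilon_h^y)_{\mathcal T_h}=\frac12\|(-\nabla\cdot\bm\beta)^{1/2}\varepsilon_h^y\|_{\mathcal T_h}^2$ on the left-hand side and bounds the troublesome term by a weighted Cauchy--Schwarz/Young inequality, $(\nabla\cdot\bm\beta\,\delta^y,\varepsilon_h^y)_{\mathcal T_h}\le C\|\delta^y\|_{\mathcal T_h}^2+\frac12\|(-\nabla\cdot\bm\beta)^{1/2}\varepsilon_h^y\|_{\mathcal T_h}^2$, so the weighted norm is absorbed and control of $\|\varepsilon_h^y\|_{\mathcal T_h}$ itself is never needed. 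You instead drop the (nonnegative) volume term and tame $\|\varepsilon_h^y\|_{\mathcal T_h}$ through a discrete Poincar\'e--Friedrichs inequality combined with \Cref{error_nabla}; this is legitimate precisely because $\varepsilon_h^{\widehat y}=0$ on $\varepsilon_h^\partial$, and the paper itself states exactly this inequality, \eqref{poincare}, in Step 5 for the dual variable, citing \cite{MR3440284}. Both routes give the claimed bound. The paper's absorption trick is more self-contained at this step---it needs nothing beyond \eqref{beta_con} and $\bm\beta\in[W^{1,\infty}(\Omega)]^d$---whereas your route imports one extra lemma; in exchange, your estimates are linear in the energy quantity $\mathcal E$, which makes the concluding division step slightly cleaner than the paper's quadratic Young-type bookkeeping.
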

\begin{proof}
	Taking $(\bm r_1,w_1,\mu_1)=(\varepsilon_h^{\bm q},\varepsilon_h^y,\varepsilon_h^{\widehat{y}})$ in \eqref{error_y} in  \Cref{lemma_error_y} gives
	\begin{align*}
	\hspace{1em}\hspace{-1em} \mathscr B_1(\varepsilon^{\bm q}_h,\varepsilon^y_h,\varepsilon^{\widehat{y}}_h;\varepsilon_h^{\bm q},\varepsilon_h^y,\varepsilon_h^{\widehat{y}}) &=-\langle \delta^{\widehat{y}},\varepsilon_h^{\bm q}\cdot\bm n \rangle_{\partial \mathcal{T}_h}+(\bm \beta\delta^y,\nabla \varepsilon_h^y)_{\mathcal{T}_h}\nonumber\\
	&\quad+(\nabla\cdot\bm \beta \delta^y,w_1)_{\mathcal{T}_h}-\langle\widehat{\bm \delta}_1,\varepsilon_h^y-\varepsilon_h^{\widehat{y}}\rangle_{\partial \mathcal{T}_h}\\
	&=: T_1+T_2+T_3+T_4,
	\end{align*}
	where we used $\varepsilon_h^{\widehat{y}}=0$ on $\varepsilon_h^\partial$. We estimate $T_i$, for $i=1,2,3,4$, as follows.  First,
	\begin{align*}
	T_1\le Ch^{-1} \| \delta^{\widehat{y}} \|_{\partial \mathcal{T}_h}^2+\frac{1}{4}\| \varepsilon_h^{\bm q} \|_{\mathcal{T}_h}^2,
	\end{align*}
	where we used trace and inverse inequalities. For the second term $T_2$, by  \Cref{error_nabla}, we have
	\begin{align*}
	T_2 &\le C \|\delta^y\|_{\mathcal{T}_h}^2+ \frac{1}{4}\| \varepsilon_h^{\bm q} \|_{\mathcal{T}_h}^2+\frac{1}{4h}\| \varepsilon_h^y-\varepsilon_h^{\widehat{y}} \|_{\partial \mathcal{T}_h}^2.
	\end{align*}
	For the third term $T_3$, we have
	\begin{align*}
		 T_3\le C\| \delta^y \|_{\mathcal{T}_h}^2+\frac{1}{2}\| (-\nabla\cdot\bm \beta)^{\frac{1}{2}}\varepsilon_h^y \|_{\mathcal{T}_h}^2.
	\end{align*}
	For the last term $T_4$, 
	\begin{align*}
	T_4 &\le Ch\| \widehat{\delta}_1 \|_{\partial \mathcal{T}_h}^2+\frac{1}{4h}\| \varepsilon_h^y-\varepsilon_h^{\widehat{y}} \|_{\partial \mathcal{T}_h}.
	\end{align*}
	
	Sum all the estimates for $\{ T_i \}_{i=1}^4$ to obtain
	\begin{align*}
	\| \varepsilon_h^{\bm q} \|_{\mathcal{T}_h}^2+h^{-1} \|\varepsilon_h^y-\varepsilon_h^{\widehat{y}} \|_{\partial \mathcal{T}_h}^2 &\lesssim h^{-1}\| \delta^{\widehat{y}} \|_{\partial \mathcal{T}_h}^2+\|\delta^y\|_{\mathcal{T}_h}^2+h\|\widehat{\delta}_1\|_{\partial \mathcal{T}_h}^2 \\
	&\lesssim h^{2(k+1)}(| \bm q |_{k+1}^2+|y|_{k+2}^2).
	\end{align*}
\end{proof}

\subsubsection{Step 3: Estimate for $\varepsilon_h^y$ by a duality argument.}
\label{subsec:proof_step_3}
Next, we introduce the dual problem for any given $\Theta$ in $L^2(\Omega)$:
\begin{equation}\label{Dual_PDE}
\begin{split}
\bm\Phi+\nabla\Psi&=0\qquad\qquad~\text{in}\ \Omega,\\
\nabla\cdot(\bm{\Phi}-\bm \beta \Psi)&=\Theta \qquad\text{in}\ \Omega,\\
\Psi&=0\qquad\qquad~\text{on}\ \partial\Omega.
\end{split}
\end{equation}
Since the domain $\Omega$ is convex, we have the following regularity estimate
\begin{align}\label{dual_esti}
\norm{\bm \Phi}_{1,\Omega} + \norm{\Psi}_{2,\Omega} \le C_{\text{reg}} \norm{\Theta}_\Omega.
\end{align}
We use the following notation below:
\begin{align} 
\delta^{\bm \Phi} &=\bm \Phi-{\bm\Pi}_V\bm \Phi, \quad \delta^\Psi=\Psi- {\Pi}_W \Psi, \quad
\delta^{\widehat \Psi} = \Psi-\mathcal{I}_h \Psi.\label{notation_2}
\end{align}

\begin{lemma} \label{dual_y}
	We have
	\begin{align}
	\| \varepsilon_h^y \|_{\mathcal{T}_h} \lesssim h^{k+2} (|\bm q|_{k+1}+|y|_{k+2}).
	\end{align}
\end{lemma}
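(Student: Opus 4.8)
The plan is to run an Aubin--Nitsche duality argument built on the dual problem \eqref{Dual_PDE}. I would take the data $\Theta = \varepsilon_h^y$, so that $\|\varepsilon_h^y\|_{\mathcal{T}_h}^2 = (\varepsilon_h^y,\Theta)_{\mathcal{T}_h}$ and the regularity estimate \eqref{dual_esti} gives $\|\bm\Phi\|_{1,\Omega}+\|\Psi\|_{2,\Omega}\lesssim\|\varepsilon_h^y\|_{\mathcal{T}_h}$. The key structural observation is that \eqref{Dual_PDE} is exactly the adjoint of the mixed primal problem, hence it is governed by the operator $\mathscr B_2$. I would therefore first derive a \emph{dual consistency identity} by inserting the projected dual variables $(\bm\Pi_V\bm\Phi,\Pi_W\Psi,\mathcal I_h\Psi)$ into $\mathscr B_2$ and using the projection definitions \eqref{def_L2} together with the exact dual equations, exactly as in the proof of \Cref{lemma_error_y}. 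This produces an identity of the form
\[
\mathscr B_2(\bm\Pi_V\bm\Phi,\Pi_W\Psi,\mathcal I_h\Psi;\bm r_2,w_2,\mu_2)=(\Theta,w_2)_{\mathcal T_h}+\Lambda(\bm r_2,w_2,\mu_2),
\]
where $\Lambda$ collects projection-error contributions built from $\delta^{\bm\Phi},\delta^\Psi,\delta^{\widehat\Psi}$ as defined in \eqref{notation_2}.

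Next I would couple this with the primal error equation through the adjoint relation of \Cref{identical_equa}. Taking $(\varepsilon_h^{\bm q},\varepsilon_h^y,\varepsilon_h^{\widehat y})$ as the first argument of $\mathscr B_1$ and $(\bm\Pi_V\bm\Phi,\Pi_W\Psi,\mathcal I_h\Psi)$ as the first argument of $\mathscr B_2$ yields
\begin{equation*}
\begin{split}
&\mathscr B_1(\varepsilon_h^{\bm q},\varepsilon_h^y,\varepsilon_h^{\widehat y};\bm\Pi_V\bm\Phi,-\Pi_W\Psi,-\mathcal I_h\Psi)\\
&\quad+\mathscr B_2(\bm\Pi_V\bm\Phi,\Pi_W\Psi,\mathcal I_h\Psi;-\varepsilon_h^{\bm q},\varepsilon_h^y,\varepsilon_h^{\widehat y})=0.
\end{split}
\end{equation*}
I would evaluate the first term using \Cref{lemma_error_y} with test functions $(\bm\Pi_V\bm\Phi,-\Pi_W\Psi,-\mathcal I_h\Psi)$, and the second term using the dual consistency identity with test functions $(-\varepsilon_h^{\bm q},\varepsilon_h^y,\varepsilon_h^{\widehat y})$. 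Since $\Theta=\varepsilon_h^y$, the contribution $(\Theta,\varepsilon_h^y)_{\mathcal T_h}=\|\varepsilon_h^y\|_{\mathcal T_h}^2$ appears, while the genuine bilinear parts cancel by construction, so that $\|\varepsilon_h^y\|_{\mathcal T_h}^2$ is expressed entirely as a sum of products of primal and dual projection errors.

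Finally I would bound each residual term by Cauchy--Schwarz and the approximation estimates \eqref{classical_ine}, inserting the energy bound of \Cref{energy_norm_q} for $\varepsilon_h^{\bm q}$ and $\varepsilon_h^y-\varepsilon_h^{\widehat y}$ and the dual regularity \eqref{dual_esti}. Volume terms such as $(\bm\beta\,\delta^y,\nabla\Pi_W\Psi)_{\mathcal T_h}$ are immediately $O(h^{k+2}\|\Psi\|_{2,\Omega})$, hence $O(h^{k+2}\|\varepsilon_h^y\|_{\mathcal T_h})$. I expect the \emph{main obstacle} to be the interior-face pairings such as $\langle\delta^{\widehat y},\bm\Pi_V\bm\Phi\cdot\bm n\rangle_{\partial\mathcal T_h}$ and the terms involving $\widehat{\bm\delta}_1$, for which a naive estimate only yields $h^{k+1}$. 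To recover the sharp $h^{k+2}$ rate I would exploit the single-valuedness of the exact dual flux $\bm\Phi\cdot\bm n$ and trace $\Psi$ across interior faces: writing $\bm\Pi_V\bm\Phi\cdot\bm n=\bm\Phi\cdot\bm n-\delta^{\bm\Phi}\cdot\bm n$ and using $\bm n^+=-\bm n^-$ makes the leading contributions telescope to zero, while the $L^2$-orthogonality of the projections against polynomials removes the low-order pieces, leaving only factors controlled by $\delta^{\bm\Phi}$ and $\delta^\Psi$. Combining these cancellations with \Cref{energy_norm_q} gives
\[
\|\varepsilon_h^y\|_{\mathcal T_h}^2\lesssim h^{k+2}(|\bm q|_{k+1}+|y|_{k+2})\,\|\varepsilon_h^y\|_{\mathcal T_h},
\]
and dividing by $\|\varepsilon_h^y\|_{\mathcal T_h}$ yields the claimed estimate.
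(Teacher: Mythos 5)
Your proposal is correct and, at its core, is the same Aubin--Nitsche duality argument the paper uses for \Cref{dual_y}: the same dual problem \eqref{Dual_PDE} with $\Theta=\pm\varepsilon_h^y$ (the sign is immaterial), the same test functions $(\bm\Pi_V\bm\Phi,-\Pi_W\Psi,-\mathcal I_h\Psi)$ inserted into the error equation \eqref{error_y}, the same single-valuedness cancellations (writing $\bm\Pi_V\bm\Phi\cdot\bm n=\bm\Phi\cdot\bm n-\delta^{\bm\Phi}\cdot\bm n$ and letting interior-face contributions telescope) needed to upgrade the naive $h^{k+1}$ rate to $h^{k+2}$, and the same final assembly via \eqref{classical_ine}, \eqref{dual_esti}, \Cref{error_nabla}, and \Cref{energy_norm_q}. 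The one place you genuinely deviate is in how the second evaluation of $\mathscr B_1(\varepsilon_h^{\bm q},\varepsilon_h^y,\varepsilon_h^{\widehat y};\bm\Pi_V\bm\Phi,-\Pi_W\Psi,-\mathcal I_h\Psi)$ is produced: the paper expands the definition of $\mathscr B_1$, integrates by parts term by term, and invokes the dual PDE directly, whereas you flip this quantity via the adjoint identity of \Cref{identical_equa} into $-\mathscr B_2(\bm\Pi_V\bm\Phi,\Pi_W\Psi,\mathcal I_h\Psi;-\varepsilon_h^{\bm q},\varepsilon_h^y,\varepsilon_h^{\widehat y})$ and then apply a consistency identity for $\mathscr B_2$ on the projected dual solution, derived by the same template as \Cref{lemma_error_y}. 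The two computations are algebraically equivalent---\Cref{identical_equa} is itself proved by exactly the integration by parts the paper performs inline---so your route is a clean modularization that reuses existing lemmas rather than redoing the calculus. What it requires, and what you should state explicitly, is that \Cref{identical_equa} holds as an algebraic identity for \emph{arbitrary} tuples in $\bm V_h\times W_h\times\widetilde M_h(o)$ under \textbf{(A1)}, not merely for the EDG solution variables for which it is stated (this is true, and is how the paper itself uses the lemma in Step 7), and that $\mathcal I_h\Psi$ restricted to the interior faces is an admissible element of $\widetilde M_h(o)$ (it is, since $\Psi=0$ on $\partial\Omega$). With these two remarks made precise, your argument reproduces the paper's identity for $\|\varepsilon_h^y\|_{\mathcal T_h}^2$ term by term and closes in the same way.
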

\begin{proof}
	First we take $(\bm r_1,w_1,\mu_1)=(\bm \Pi_V \bm \Phi, -\Pi_W \Psi,-\mathcal{I}_h \Psi)$ in equation \eqref{error_y} to get
	\begin{align*}
	\hspace{3em}&\hspace{-3em} \mathscr B_1(\varepsilon^{\bm q}_h,\varepsilon^y_h,\varepsilon^{\widehat{y}}_h;\bm \Pi_V \bm \Phi, -\Pi_W \Psi,-\mathcal{I}_h \Psi)\\
	&=(\varepsilon_h^{\bm q},\bm \Pi_V \bm \Phi)_{\mathcal T_h}-( \varepsilon_h^y,\nabla\cdot\bm \Pi_V \bm \Phi)_{\mathcal T_h}+\langle \varepsilon_h^{\widehat{y}},\bm \Pi_V \bm \Phi\cdot\bm n\rangle_{\partial\mathcal T_h\backslash \varepsilon_h^\partial} \\
	&\quad +(\varepsilon_h^{\bm q}+\bm \beta \varepsilon_h^y,  \nabla \Pi_W \Psi)_{\mathcal T_h}+(\nabla\cdot\bm \beta \varepsilon_h^y,\Pi_W \Psi)_{\mathcal{T}_h}\\
	&\quad-\langle \varepsilon_h^{\bm q}\cdot\bm n +(h^{-1}+\tau_1) \varepsilon_h^y,\Pi_W \Psi\rangle_{\partial\mathcal T_h}\\
	&\quad-\langle (\bm \beta \cdot \bm n-h^{-1}-\tau_1) \varepsilon_h^{\widehat{y}},\Pi_W \Psi \rangle_{\partial \mathcal{T}_h\backslash \varepsilon_h^\partial}\\
	&\quad+\langle  \varepsilon_h^{\bm q}\cdot\bm n+\bm \beta \cdot \bm n \varepsilon_h^{\widehat{y}}+(h^{-1}+\tau_1)(\varepsilon_h^y-\varepsilon_h^{\widehat{y}}),\mathcal{I}_h \Psi\rangle_{\partial\mathcal T_h\backslash\varepsilon^{\partial}_h}.
	\end{align*}
	Moreover, we have
	\begin{align*}
	-(\varepsilon_h^y,\nabla\cdot \bm \Pi_V \bm \Phi)_{\partial \mathcal{T}_h}&=(\nabla \varepsilon_h^y,\bm \Phi)_{\mathcal{T}_h}-\langle \varepsilon_h^y,\bm \Pi_V\Phi \cdot \bm n\rangle_{\partial \mathcal{T}_h}\\
	&=-(\varepsilon_h^y,\nabla\cdot \bm \Phi)_{\mathcal{T}_h}+\langle \varepsilon_h^y,\delta^{\bm \Phi} \cdot \bm n\rangle_{\partial \mathcal{T}_h},\\
	( \varepsilon_h^{\bm q},\nabla \Pi_W \Psi)_{\mathcal{T}_h}&=-(\nabla\cdot \varepsilon_h^{\bm q}, \Psi)_{\mathcal{T}_h}+\langle \varepsilon_h^{\bm q}\cdot\bm n,\Pi_W \Psi \rangle_{\partial \mathcal{T}_h}\\
	&=(\varepsilon_h^{\bm q},\nabla \Psi)_{\mathcal{T}_h}-\langle \varepsilon_h^{\bm q}\cdot\bm n,\delta^\Psi \rangle_{\partial \mathcal{T}_h},\\
	\quad(\bm \beta \varepsilon_h^y,\nabla \Pi_W \Psi)_{\mathcal{T}_h}+(\nabla\cdot\bm \beta \varepsilon_h^y,\Pi_W \Psi)_{\mathcal{T}_h} &= ( \varepsilon_h^y,\nabla\cdot(\bm \beta \Pi_W \Psi))_{\mathcal{T}_h}\\
	&=(\varepsilon_h^y,\nabla \cdot(\bm \beta \Psi))_{\mathcal{T}_h}-(\varepsilon_h^y,\nabla\cdot (\bm \beta\delta^\Psi))_{\mathcal{T}_h}\\
	&=(\varepsilon_h^y,\nabla \cdot(\bm \beta \Psi))_{\mathcal{T}_h}+(\bm \beta\cdot(\nabla\varepsilon_h^y),\delta^\Psi)_{\mathcal{T}_h}\\
	 & \quad -\langle \bm \beta\cdot\bm n \varepsilon_h^y,\delta^\Psi \rangle_{\partial \mathcal{T}_h}.
	\end{align*}
	Together with the dual problem \eqref{Dual_PDE}, using $\Theta=-\varepsilon_h^y$, we have
	\begin{align*}
	\hspace{3em}&\hspace{-3em} \mathscr B(\varepsilon^{\bm q}_h,\varepsilon^y_h,\varepsilon^{\widehat{y}}_h;\bm \Pi_V \bm \Phi, -\Pi_W \Psi,-\mathcal{I}_h \Psi)\\
	&=(\varepsilon_h^{\bm q}, \bm \Phi)_{\mathcal T_h}-( \varepsilon_h^y,\nabla\cdot \bm \Phi)_{\mathcal T_h}+\langle \varepsilon_h^y-\varepsilon_h^{\widehat{y}},\delta^{\bm \Phi}\cdot\bm n\rangle_{\partial\mathcal T_h} \\
	&\quad +(\varepsilon_h^{\bm q},  \nabla \Psi)_{\mathcal T_h}-\langle \varepsilon_h^{\bm q}\cdot\bm n,\delta^\Psi \rangle_{\partial \mathcal{T}_h}+(\varepsilon_h^y,\nabla\cdot (\bm \beta\Psi))_{\mathcal{T}_h}\\
	&\quad+(\bm \beta\cdot(\nabla\varepsilon_h^y),\delta^\Psi)_{\mathcal{T}_h}-\langle \bm \beta\cdot\bm n \varepsilon_h^y,\delta^\Psi \rangle_{\partial \mathcal{T}_h}\\
	&\quad-\langle (\tau_1+h^{-1}) (\varepsilon_h^y-\varepsilon_h^{\widehat{y}})+\bm \beta\cdot\bm n \varepsilon_h^{\widehat{y}},\Pi_W \Psi\rangle_{\partial\mathcal T_h}\\
	&\quad+\langle  \varepsilon_h^{\bm q}\cdot\bm n+(\tau_1+h^{-1})(\varepsilon_h^y-\varepsilon_h^{\widehat{y}})+\bm \beta\cdot\bm n \varepsilon_h^{\widehat{y}},\mathcal{I}_h \Psi\rangle_{\partial\mathcal T_h}\\
	&=(\varepsilon_h^y,\varepsilon_h^y)_{\mathcal{T}_h}+\langle \varepsilon_h^y-\varepsilon_h^{\widehat{y}},\delta^{\bm \Phi}\cdot \bm n \rangle_{\partial \mathcal{T}_h}-\langle \varepsilon_h^{\bm q}\cdot\bm n,\delta^{\widehat{\Psi}} \rangle_{\partial \mathcal{T}_h}\\
	&\quad +(\bm \beta\cdot\nabla\varepsilon_h^y,\delta^\Psi)_{\mathcal{T}_h}-\langle \bm \beta\cdot\bm n (\varepsilon_h^y-\varepsilon_h^{\widehat{y}}),\delta^\Psi \rangle_{\partial \mathcal{T}_h}\\
	&\quad+\langle (\tau_1+h^{-1}) (\varepsilon_h^y-\varepsilon_h^{\widehat{y}}),\delta^\Psi-\delta^{\widehat{\Psi}} \rangle_{\partial \mathcal{T}_h}.
	\end{align*}
	Here, we used that $\langle \varepsilon_h^{\widehat{y}},\bm \Phi\cdot \bm n \rangle_{\partial \mathcal{T}_h\backslash\varepsilon_h^\partial}=0$, $\Psi= \varepsilon_h^{\widehat{y}}=0$ on $\varepsilon_h^\partial$, and
	\begin{align*}
	\langle \bm \beta\cdot \bm n \varepsilon_h^{\widehat{y}},\delta^{\widehat{\Psi}} \rangle_{\partial \mathcal{T}_h}=0,
	\end{align*}
	since $\varepsilon_h^{\widehat y}$ is single-valued on interior faces and $\varepsilon_h^{\widehat y} = 0$ on boundary faces.
	On the other hand, from equation \eqref{error_y},
	\begin{align*}
	\hspace{3em}&\hspace{-3em}\mathscr B(\varepsilon^{\bm q}_h,\varepsilon^y_h,\varepsilon^{\widehat{y}}_h;\bm \Pi_V \bm \Phi, -\Pi_W \Psi,-\mathcal{I}_h \Psi) \\
&=-\langle \delta^{\widehat{y}},\bm \Pi_V \bm \Phi\cdot\bm n \rangle_{\partial \mathcal{T}_h}-(\bm \beta\delta^y,\nabla \Pi_W \Psi)_{\mathcal{T}_h}-(\nabla\cdot  \bm{\beta} \delta^y, \Pi_W \Psi)_{\mathcal T_h}\nonumber\\
&\quad+\langle\widehat{\bm \delta}_1,\Pi_W \Psi-\mathcal{I}_h \Psi\rangle_{\partial \mathcal{T}_h}.
	\end{align*}
	
	Comparing the two equations above, we have
	\begin{align*}
	\|\varepsilon_h^y\|_{\mathcal{T}_h}^2&=\langle \delta^{\widehat{y}},\delta^{\bm \Phi}\cdot\bm n \rangle_{\partial \mathcal{T}_h}-\langle\widehat{\bm \delta}_1,\delta^\Psi-\delta^{\widehat{\Psi}}\rangle_{\partial \mathcal{T}_h}-(\bm \beta\delta^y,\nabla\Pi_W \Psi)_{\mathcal{T}_h}\\
	&\quad -(\nabla\cdot\bm \beta \delta^y,\Pi_W \Psi)_{\mathcal{T}_h}-\langle \varepsilon_h^y-\varepsilon_h^{\widehat{y}},\delta^{\bm \Phi}\cdot \bm n \rangle_{\partial \mathcal{T}_h}+\langle \varepsilon_h^{\bm q}\cdot\bm n,\delta^{\widehat{\Psi}} \rangle_{\partial \mathcal{T}_h}\\
	&\quad -\langle (\tau_1+h^{-1}) (\varepsilon_h^y-\varepsilon_h^{\widehat{y}}),\delta^\Psi-\delta^{\widehat{\Psi}} \rangle_{\partial \mathcal{T}_h}+(\bm \beta\cdot(\nabla\varepsilon_h^y),\delta^\Psi)_{\mathcal{T}_h}\\
	&\quad-\langle \bm \beta\cdot\bm n (\varepsilon_h^y-\varepsilon_h^{\widehat{y}}),\delta^\Psi \rangle_{\partial \mathcal{T}_h}\\
	&=:\sum_{i=1}^9 T_i.
	\end{align*}
	We estimate each terms separately. For the first term
	\begin{align*}
	T_1 &\le \|\delta^{\widehat{y}}\|_{\partial \mathcal{T}_h} \|\delta^{\bm \Phi}\|_{\partial \mathcal{T}_h}\lesssim h^\frac{1}{2} \|\delta^{\widehat{y}}\|_{\partial \mathcal{T}_h}\|\bm \Phi\|_{1,\Omega}\lesssim h^\frac{1}{2} \|\delta^{\widehat{y}}\|_{\partial \mathcal{T}_h}\|\varepsilon_h^y\|_{\Omega}.
	\end{align*}
	For the second term, 
	\begin{align*}
	T_2 &\lesssim h^{\frac{3}{2}} \| \widehat{\delta}_1 \|_{\partial\mathcal{T}_h}\|\Psi\|_{2,\Omega}\lesssim h^{\frac{3}{2}} \| \widehat{\delta}_1 \|_{\partial\mathcal{T}_h}\| \varepsilon_h^y\|_{\mathcal{T}_h}.
	\end{align*}
	For the third term $T_3$,
	\begin{align*}
	T_3&\le \|\bm \beta\|_{0,\infty,\Omega}\| \delta^y \|_{\mathcal{T}_h}(\| \nabla \delta^\Psi \|_{\mathcal{T}_h}+\| \nabla \Psi \|_{\Omega})\\
	&\lesssim  \| \delta^y \|_{\mathcal{T}_h} (\| \Psi \|_{2,\Omega}+\| \Psi \|_{1,\Omega})\\
	&\lesssim  \| \delta^y \|_{\mathcal{T}_h} \| \varepsilon_h^y \|_{\mathcal{T}_h}.
	\end{align*}
	For $T_4$,
	\begin{align*}
		T_4 &\lesssim \| \bm \beta \|_{1,\infty,\Omega}\| \delta^y \|_{\mathcal{T}_h}\| \Pi_W \Psi \|_{\mathcal{T}_h}\lesssim \| \delta^y \|_{\mathcal{T}_h}\| \varepsilon_h^y \|_{\mathcal{T}_h}.
	\end{align*}
	For $T_5$,
	\begin{align*}
	T_5&\le \|\varepsilon_h^y-\varepsilon_h^{\widehat{y}}\|_{\partial \mathcal{T}_h}\| \delta^{\bm \Phi} \|_{\partial \mathcal{T}_h}\\
	&\lesssim h^\frac{1}{2} \|\varepsilon_h^y-\varepsilon_h^{\widehat{y}}\|_{\partial \mathcal{T}_h} \|\bm \Phi\|_{1,\Omega}\\
	&\lesssim h^\frac{1}{2} \|\varepsilon_h^y-\varepsilon_h^{\widehat{y}}\|_{\partial \mathcal{T}_h} \|\varepsilon_h^y\|_{\mathcal{T}_h}.
	\end{align*}
	For $T_6$, $T_7$, and $T_9$, following the same idea for $T_5$, we have
	\begin{align*}
	T_6 &\lesssim h\|\varepsilon_h^{\bm q}\|_{\mathcal{T}_h}\|\varepsilon_h^y\|_{\mathcal{T}_h},\\
	T_7 &\lesssim  h^\frac{1}{2} \| \varepsilon_h^y-\varepsilon_h^{\widehat{y}} \|_{\partial \mathcal{T}_h} \| \varepsilon_h^y \|_{\mathcal{T}_h},\\
	T_9 &\lesssim \|\bm \beta\|_{0,\infty,\Omega} h^{\frac{1}{2}}\| \varepsilon_h^y-\varepsilon_h^{\widehat{y}} \|_{\partial \mathcal{T}_h}\| \varepsilon_h^y \|_{\mathcal{T}_h}.
	\end{align*}
	And by Lemma \ref{error_nabla}, we have
	\begin{align*}
	T_8 &\lesssim\| \bm \beta \|_{0,\infty,\Omega}h\| \nabla \varepsilon_h^y \|_{\mathcal{T}_h}\|\Psi\|_{1}\\
	&\lesssim h(\| \varepsilon_h^{\bm q} \|_{\mathcal{T}_h}+h^{-\frac{1}{2}}\|\varepsilon_h^y-\varepsilon_h^{\widehat{y}}\|_{\mathcal{T}_h}  )\| \varepsilon_h^y \|_{\mathcal{T}_h}.
	\end{align*}
	Therefore, summing the estimates and using the bounds \eqref{classical_ine} and Lemma \ref{energy_norm_q} gives the result.	
\end{proof}
The triangle inequality yields optimal convergence rates for $\|\bm q -\bm q_h(u)\|_{\mathcal T_h}$ and $\|y -y_h(u)\|_{\mathcal T_h}$:

\begin{lemma}\label{le} We have
	\begin{subequations}
		\begin{align}
		\|\bm q -\bm q_h(u)\|_{\mathcal T_h} &\le \|\delta^{\bm q}\|_{\mathcal T_h} + \|\varepsilon_h^{\bm q}\|_{\mathcal T_h} \lesssim h^{k+1}(|\bm q|_{k+1}+|y|_{k+2}),\\
		\|y -y_h(u)\|_{\mathcal T_h} &\le \|\delta^{y}\|_{\mathcal T_h} + \|\varepsilon_h^{y}\|_{\mathcal T_h}  \lesssim  h^{k+2}(|\bm q|_{k+1}+|y|_{k+2}).
		\end{align}
	\end{subequations}
\end{lemma}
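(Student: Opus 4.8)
The plan is to read both estimates directly off the additive decompositions fixed in the notation \eqref{notation}. Writing $\bm q - \bm q_h(u) = (\bm q - \bm\Pi_V \bm q) + (\bm\Pi_V \bm q - \bm q_h(u)) = \delta^{\bm q} + \varepsilon_h^{\bm q}$ and likewise $y - y_h(u) = \delta^y + \varepsilon_h^y$, the triangle inequality immediately yields the first inequality on each line of the statement. It then remains only to bound the four individual pieces. No new argument is needed at this stage: the substantive work was already carried out in the energy estimate \Cref{energy_norm_q} and the duality estimate \Cref{dual_y}, so this lemma is essentially a bookkeeping step that assembles those bounds together with the standard approximation properties of the projections.

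For the flux, I would control the projection error $\norm{\delta^{\bm q}}_{\mathcal T_h}$ by the classical $L^2$-projection bound in \eqref{classical_ine}, which supplies the rate $h^{k+1}$, and the discrete error $\norm{\varepsilon_h^{\bm q}}_{\mathcal T_h}$ by \Cref{energy_norm_q}, which already gives $\norm{\varepsilon_h^{\bm q}}_{\mathcal T_h} \lesssim h^{k+1}(|\bm q|_{k+1}+|y|_{k+2})$. Adding the two summands produces the claimed $h^{k+1}$ estimate for $\norm{\bm q - \bm q_h(u)}_{\mathcal T_h}$, with the projection contribution absorbed into the $|\bm q|_{k+1}$ term on the right-hand side.

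For the state, the same strategy applies one order higher. The projection part $\norm{\delta^y}_{\mathcal T_h}$ is bounded at rate $h^{k+2}$ by \eqref{classical_ine}, while the discrete part $\norm{\varepsilon_h^y}_{\mathcal T_h}$ is exactly the content of \Cref{dual_y}, namely $\norm{\varepsilon_h^y}_{\mathcal T_h} \lesssim h^{k+2}(|\bm q|_{k+1}+|y|_{k+2})$. Summing the two gives the asserted $h^{k+2}$ estimate for $\norm{y - y_h(u)}_{\mathcal T_h}$.

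Since every ingredient is already established, I do not expect any genuine obstacle; the only point to keep straight is that the approximation bounds for $\delta^{\bm q}$ and $\delta^y$ are taken in the seminorm form (the $L^2$-projection estimates being sharp in $|\bm q|_{k+1}$ and $|y|_{k+2}$ via Bramble--Hilbert), so that both contributions to each line share the same right-hand side as the discrete-error bounds and combine cleanly. The convergence rates for $\bm q$ and $y$ then match the corresponding entries of \Cref{main_res}.
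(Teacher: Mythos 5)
Your proposal is correct and matches the paper's own argument exactly: the paper proves \Cref{le} precisely by applying the triangle inequality to the decompositions fixed in \eqref{notation}, then invoking the projection bounds \eqref{classical_ine} for $\delta^{\bm q}$ and $\delta^y$ together with \Cref{energy_norm_q} for $\varepsilon_h^{\bm q}$ and \Cref{dual_y} for $\varepsilon_h^y$. Nothing further is needed.
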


\subsubsection{Step 4: The error equation for part 2 of the auxiliary problem \eqref{EDG_u_b}.} \label{subsec:proof_step_4}

Next, we bound the error between the solution of the dual convection diffusion equation \eqref{eq_adeq_c}-\eqref{eq_adeq_d} for $ z $ and the auxiliary HDG equation \eqref{EDG_u_b}.   

First we define
\begin{equation}\label{notation_1}
\begin{split}
\delta^{\bm p} &=\bm p- {\bm\Pi}_V\bm p,  \qquad\qquad\qquad \qquad\qquad\qquad\;\;\;\;\varepsilon^{\bm p}_h={\bm\Pi}_V \bm p-\bm p_h(u),\\
\delta^z&=z- {\Pi}_W z, \qquad\qquad\qquad \qquad\qquad\qquad\;\;\;\; \;\varepsilon^{z}_h={\Pi}_W z-z_h(u),\\
\delta^{\widehat z} &= z-\mathcal I_hz,  \qquad\qquad\qquad\qquad\qquad\qquad\quad\;\;~~ \varepsilon^{\widehat z}_h= \mathcal I_h z-\widehat{z}_h(u),\\
\widehat {\bm\delta}_2 &= \delta^{\bm p}\cdot\bm n-\bm \beta\cdot\bm n \delta^{\widehat{z}} + (\tau_2+h^{-1})  (\delta^z-\delta^{\widehat{z}}),
\end{split}
\end{equation}
where $\widehat z_h(u) = \widehat z_h^o(u)$ on $\varepsilon_h^o$ and $\widehat z_h(u) = 0$ on $\varepsilon_h^{\partial}$.  This gives $\varepsilon_h^{\widehat z} = 0$ on $\varepsilon_h^{\partial}$.

Following the same idea with  \Cref{lemma_error_y}, we have the following error equation:
\begin{lemma}\label{lemma:step1_first_lemma}
	We have
	\begin{align}\label{error_z}
	\hspace{3em}&\hspace{-3em} \mathscr B_2(\varepsilon^{\bm p}_h,\varepsilon^z_h,\varepsilon^{\widehat{z}}_h;\bm r_2, w_2,\mu_2) \ \nonumber\\
	&=-\langle \delta^{\widehat{z}},\bm r_2\cdot\bm n \rangle_{\partial \mathcal{T}_h}-(\bm \beta\delta^z,\nabla w_2)_{\mathcal{T}_h}\nonumber\\
	&\quad-\langle\widehat{\bm \delta}_2,w_2\rangle_{\partial \mathcal{T}_h}+\langle \widehat{\bm \delta}_2,\mu_2 \rangle_{\partial \mathcal{T}_h\backslash\varepsilon_h^\partial}+(y_h(u)-y,w_2)_{\mathcal{T}_h}.
	\end{align}
\end{lemma}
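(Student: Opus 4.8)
The plan is to replay the computation of \Cref{lemma_error_y} essentially verbatim, with the dual operator $\mathscr B_2$ of \eqref{def_B2} in place of $\mathscr B_1$ and the projected dual solution $(\bm\Pi_V\bm p,\Pi_W z,\mathcal I_h z)$ in place of $(\bm\Pi_V\bm q,\Pi_W y,\mathcal I_h y)$. First I would expand $\mathscr B_2(\bm\Pi_V\bm p,\Pi_W z,\mathcal I_h z;\bm r_2,w_2,\mu_2)$ straight from the definition, and then use the $L^2$-orthogonality relations \eqref{def_L2} to peel off a projection error from each term. Since $\nabla\cdot\bm r_2$ and $\nabla w_2$ lie in the range of $\bm\Pi_V$ on each element, the volume pairings collapse to the exact variables; the only genuinely new volume contribution is $-(\bm\beta\delta^z,\nabla w_2)_{\mathcal T_h}$, which survives because $\bm\beta\nabla w_2$ is not piecewise polynomial. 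The face pairings do not simplify and leave behind the $\delta^z$, $\delta^{\widehat z}$, and $\delta^{\bm p}$ boundary contributions.

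Next I would substitute the element-wise weak form of the exact dual equations \eqref{mixed_c}--\eqref{mixed_d}. Because the continuous dual state satisfies $z=0$ on $\Gamma$ and $\bm p\cdot\bm n$, $z$ are single-valued across interior faces, the stabilization and interface terms built into $\mathscr B_2$ telescope: on interior faces the $(h^{-1}+\tau_2)z$ contributions cancel against their $\widehat z_h^o$ counterparts, on boundary faces they vanish with $z$, and the remaining flux terms reassemble into $\langle(\bm p-\bm\beta z)\cdot\bm n,w_2\rangle_{\partial\mathcal T_h}$, so that the exact source $(y-y_d,w_2)_{\mathcal T_h}$ emerges with a plus sign. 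Collecting the leftover projection errors packages them precisely into $-\langle\delta^{\widehat z},\bm r_2\cdot\bm n\rangle_{\partial\mathcal T_h}$, $-\langle\widehat{\bm\delta}_2,w_2\rangle_{\partial\mathcal T_h}$, and $\langle\widehat{\bm\delta}_2,\mu_2\rangle_{\partial\mathcal T_h\backslash\varepsilon_h^\partial}$, with $\widehat{\bm\delta}_2$ as in \eqref{notation_1}. Two features distinguish this from the primal computation: the sign of the $\bm\beta\cdot\bm n$ face term flips and $\tau_1$ is replaced by $\tau_2$, and there is no zeroth-order term $(\nabla\cdot\bm\beta\,\delta^z,w_2)_{\mathcal T_h}$ because the dual convection enters in divergence form $-\nabla\cdot(\bm\beta z)$ rather than as $\bm\beta\cdot\nabla y$.

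Finally I would subtract the auxiliary equation \eqref{EDG_u_b} from the identity just obtained; by linearity of $\mathscr B_2$ in its first three slots this produces $\mathscr B_2(\varepsilon_h^{\bm p},\varepsilon_h^z,\varepsilon_h^{\widehat z};\bm r_2,w_2,\mu_2)$ on the left. On the right the $y_d$ contributions cancel and the mismatch between the exact source $y-y_d$ and the auxiliary source $y_h(u)-y_d$ leaves the state-difference term of \eqref{error_z}, completing the proof. The main obstacle is purely bookkeeping: carefully separating the integrals over $\partial\mathcal T_h$ from those over $\partial\mathcal T_h\backslash\varepsilon_h^\partial$ and verifying that the $\mathcal I_h z$ interface and boundary terms combine with the stabilization so as to repackage cleanly into $\widehat{\bm\delta}_2$. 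Once the altered signs on $\bm\beta\cdot\bm n$ and on $\tau_2$ are tracked with care, the remainder is routine and identical in spirit to \Cref{lemma_error_y}.
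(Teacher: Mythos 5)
Your proposal is correct and is precisely the argument the paper intends: the paper omits the proof of \Cref{lemma:step1_first_lemma}, stating only that it follows the same idea as \Cref{lemma_error_y}, and your sketch carries out exactly that adaptation (expansion of $\mathscr B_2$ at $(\bm\Pi_V\bm p,\Pi_W z,\mathcal I_h z)$, $L^2$-orthogonality to collapse the volume pairings, substitution of the element-wise mixed form of the exact dual equations with $z=0$ on $\Gamma$, repackaging of the face residuals into $\widehat{\bm\delta}_2$, and subtraction of \eqref{EDG_u_b}). One trivial slip: $\nabla\cdot\bm r_2$ lies in the range of $\Pi_W$, while $\nabla w_2$ lies in the range of $\bm\Pi_V$; your orthogonality argument is nonetheless the right one.

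One piece of bookkeeping you glossed over deserves note: the source mismatch in the final subtraction is $(y-y_d,w_2)_{\mathcal T_h}-(y_h(u)-y_d,w_2)_{\mathcal T_h}=(y-y_h(u),w_2)_{\mathcal T_h}$, which is the \emph{negative} of the last term $(y_h(u)-y,w_2)_{\mathcal T_h}$ as written in \eqref{error_z}. A careful run of your own computation therefore lands on the opposite sign from the stated identity; this appears to be a typo in the paper's statement rather than a defect in your method, and it is immaterial downstream, since in Step 5 (term $T_4$) and Step 6 (term $R_8$) this contribution is only ever bounded through $\|y-y_h(u)\|_{\mathcal T_h}$ via Cauchy--Schwarz.
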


\subsubsection{Step 5: Estimates for $\varepsilon_h^p$ and $\varepsilon_h^z$ by an energy and duality argument.} \label{subsec:proof_step3}

First, it is easy to see that  \Cref{error_nabla} still holds for $\varepsilon_h^z$, $\varepsilon_h^{\widehat{z}}$, and $\varepsilon_h^{\bm p}$.
\begin{lemma}
	We have
	\begin{align}
	\| \nabla \varepsilon_h^z \|_{\mathcal{T}_h}\le C( \| \varepsilon_h^{\bm q} \|_{\mathcal{T}_h}+h^{-\frac{1}{2}}\| \varepsilon_h^z-\varepsilon_h^{\widehat{z}} \|_{\partial \mathcal{T}_h} ).\label{energy_z}
	\end{align}
\end{lemma}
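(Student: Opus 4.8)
The plan is to prove this as the verbatim dual analogue of \Cref{error_nabla}, using the error equation \eqref{error_z} in place of \eqref{error_y}. The decisive structural fact is that the reduced flux space $\bm V_h$ carries degree $k$ while $W_h$ carries degree $k+1$; hence for $\varepsilon_h^z\in W_h$ the piecewise gradient $\nabla\varepsilon_h^z$ has degree $k$ and therefore lies in $\bm V_h$. This makes $\bm r_2=\nabla\varepsilon_h^z$ an admissible test function. First I would set $w_2=\mu_2=0$ and $\bm r_2=\nabla\varepsilon_h^z$ in \eqref{error_z}, so that on the left only the three $\bm r_2$-terms of $\mathscr B_2$ survive, namely a flux-error volume pairing, $-(\varepsilon_h^z,\nabla\cdot\nabla\varepsilon_h^z)_{\mathcal T_h}$, and $\langle\varepsilon_h^{\widehat z},\nabla\varepsilon_h^z\cdot\bm n\rangle_{\partial\mathcal T_h\backslash\varepsilon_h^\partial}$, while on the right only $-\langle\delta^{\widehat z},\nabla\varepsilon_h^z\cdot\bm n\rangle_{\partial\mathcal T_h}$ remains.

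Next I would integrate $-(\varepsilon_h^z,\nabla\cdot\nabla\varepsilon_h^z)_{\mathcal T_h}$ by parts element by element to expose $\|\nabla\varepsilon_h^z\|_{\mathcal T_h}^2$ together with the element-boundary term $-\langle\varepsilon_h^z,\nabla\varepsilon_h^z\cdot\bm n\rangle_{\partial\mathcal T_h}$. Combining the latter with the surviving trace term and invoking $\varepsilon_h^{\widehat z}=0$ on $\varepsilon_h^\partial$ (so that the integral over $\partial\mathcal T_h\backslash\varepsilon_h^\partial$ may be extended to all of $\partial\mathcal T_h$), the two boundary contributions collapse to $\langle\varepsilon_h^z-\varepsilon_h^{\widehat z},\nabla\varepsilon_h^z\cdot\bm n\rangle_{\partial\mathcal T_h}$. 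Rearranging then isolates $\|\nabla\varepsilon_h^z\|_{\mathcal T_h}^2$ as the sum of the flux-error volume pairing, the jump boundary term $\langle\varepsilon_h^z-\varepsilon_h^{\widehat z},\nabla\varepsilon_h^z\cdot\bm n\rangle_{\partial\mathcal T_h}$, and the interpolation-residual boundary term $-\langle\delta^{\widehat z},\nabla\varepsilon_h^z\cdot\bm n\rangle_{\partial\mathcal T_h}$.

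I would then estimate each piece exactly as in the proof of \Cref{error_nabla}. Cauchy--Schwarz applied to the volume pairing produces the flux-error factor $\|\varepsilon_h^{\bm q}\|_{\mathcal T_h}$ appearing on the right-hand side of the stated bound. For the two boundary terms I would use the discrete trace/inverse inequality $\|\nabla\varepsilon_h^z\cdot\bm n\|_{\partial\mathcal T_h}\le C h^{-1/2}\|\nabla\varepsilon_h^z\|_{\mathcal T_h}$, which is legitimate precisely because $\nabla\varepsilon_h^z\in\bm V_h$; this yields the jump factor $h^{-1/2}\|\varepsilon_h^z-\varepsilon_h^{\widehat z}\|_{\partial\mathcal T_h}$ and, for the residual, the factor $C h^{-1/2}\|\delta^{\widehat z}\|_{\partial\mathcal T_h}$. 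By the interpolation bound $\|\delta^{\widehat z}\|_{\partial\mathcal T_h}\le C h^{k+3/2}\|z\|_{k+2,\Omega}$ from \eqref{classical_ine}, this last factor is of order $h^{k+1}$ and is absorbed, exactly as in \Cref{error_nabla}. Dividing through by $\|\nabla\varepsilon_h^z\|_{\mathcal T_h}$ gives the stated estimate.

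The only genuinely delicate point is the admissibility of $\bm r_2=\nabla\varepsilon_h^z$ together with the accompanying discrete trace inequality; both hinge on the reduced-order flux space, and once they are in place the remainder is a mechanical repetition of the primal argument with $(\varepsilon_h^z,\varepsilon_h^{\widehat z})$ in place of $(\varepsilon_h^y,\varepsilon_h^{\widehat y})$. A secondary point worth checking is that the interpolation residual is indeed of higher order and therefore does not enter the final bound; this is handled precisely as in the omitted proof of \Cref{error_nabla}, and I do not expect any estimate beyond those already recorded in \eqref{classical_ine}.
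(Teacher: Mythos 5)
Your overall strategy---taking $(\bm r_2,w_2,\mu_2)=(\nabla\varepsilon_h^z,0,0)$ in \eqref{error_z}, which is admissible because $\nabla\varepsilon_h^z$ is piecewise of degree $k$ and hence lies in the reduced flux space $\bm V_h$, then integrating by parts and invoking the discrete trace inequality---is exactly the argument the paper intends; this is the reduced-order-space argument behind \Cref{error_nabla}, whose proof the paper omits. However, your execution contains a genuine error. The volume pairing that survives on the left-hand side is $(\varepsilon_h^{\bm p},\nabla\varepsilon_h^z)_{\mathcal T_h}$, because \eqref{error_z} is an identity for the \emph{dual} error triple $(\varepsilon_h^{\bm p},\varepsilon_h^z,\varepsilon_h^{\widehat z})$; Cauchy--Schwarz therefore produces the factor $\|\varepsilon_h^{\bm p}\|_{\mathcal T_h}$, not $\|\varepsilon_h^{\bm q}\|_{\mathcal T_h}$ as you assert. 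Nothing at this stage of the analysis, where the primal and dual systems have not yet been coupled, bounds $(\varepsilon_h^{\bm p},\nabla\varepsilon_h^z)_{\mathcal T_h}$ by the primal flux error. The $\varepsilon_h^{\bm q}$ appearing in the displayed lemma is in fact a typo in the paper: the sentence introducing the lemma says \Cref{error_nabla} ``still holds for $\varepsilon_h^z$, $\varepsilon_h^{\widehat{z}}$, and $\varepsilon_h^{\bm p}$,'' and the only uses of \eqref{energy_z} (the bound for $T_2$ in the proof of \Cref{e_sec}, and $R_3$ in Step 6) replace $\|\nabla\varepsilon_h^z\|_{\mathcal T_h}$ by $\|\varepsilon_h^{\bm p}\|_{\mathcal T_h}+h^{-1/2}\|\varepsilon_h^z-\varepsilon_h^{\widehat z}\|_{\partial\mathcal T_h}$. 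Your argument proves that corrected statement; claiming it yields the bound with $\varepsilon_h^{\bm q}$ is wrong.

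A second gap is your claim that the residual term $h^{-1/2}\|\delta^{\widehat z}\|_{\partial\mathcal T_h}$ is ``absorbed, exactly as in \Cref{error_nabla}.'' It cannot be absorbed: it is a data term, independent of $\varepsilon_h^{\bm p}$ and of $\varepsilon_h^z-\varepsilon_h^{\widehat z}$, and the constant $C$ in the lemma may not depend on $z$. Moreover the appeal to \Cref{error_nabla} is circular, since that proof is omitted and the identical term $-\langle\delta^{\widehat y},\bm r_1\cdot\bm n\rangle_{\partial\mathcal T_h}$ already sits on the right-hand side of \eqref{error_y}, so the primal lemma has the same missing contribution. The reason the term is absent in the HDG reference \cite{MR3440284} is that there the trace error is measured against the face-wise $L^2$ projection onto the (discontinuous) trace space, which is orthogonal to $\bm r\cdot\bm n$ for $\bm r$ in the degree-$k$ flux space; here $\mathcal I_h$ is a \emph{continuous} interpolant and no such orthogonality is available, so the term genuinely survives in the EDG setting. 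What your argument actually yields is
\begin{equation*}
\|\nabla\varepsilon_h^z\|_{\mathcal T_h}\lesssim \|\varepsilon_h^{\bm p}\|_{\mathcal T_h}+h^{-\frac12}\|\varepsilon_h^z-\varepsilon_h^{\widehat z}\|_{\partial\mathcal T_h}+h^{-\frac12}\|\delta^{\widehat z}\|_{\partial\mathcal T_h},
\end{equation*}
whose last term is of order $h^{k+1}|z|_{k+2}$ by \eqref{classical_ine}. This weaker inequality suffices everywhere the lemma is invoked---carrying the extra $O(h^{k+1})$ term through \Cref{e_sec} and Step 6 changes none of the final rates---but the correct repair is to state and prove the lemma with this term (and with $\varepsilon_h^{\bm p}$), not to drop it.
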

Also, to estimate $\varepsilon_h^{\bm p}$ we need the following discrete Poincar\'e inequality that is very similar to a result from \cite{MR3440284}.  The proof is essentially the same, and is omitted.
\begin{lemma} 
   We have
   \begin{align}
   	\| \varepsilon_h^z \|_{\mathcal{T}_h} \le C(\| \nabla\varepsilon_h^z \|_{\mathcal{T}_h}+h^{-\frac{1}{2}}\| \varepsilon_h^z-\varepsilon_h^{\widehat{z}} \|_{\partial \mathcal{T}_h}).\label{poincare}
   \end{align}
\end{lemma}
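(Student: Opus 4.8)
The plan is to prove this discrete Poincar\'e inequality by a duality (Aubin--Nitsche) argument that exploits the $H^2$-regularity available on the convex domain $\Omega$, exactly paralleling the estimate already used for $\varepsilon_h^y$ in \Cref{dual_y}. First I would introduce the auxiliary problem: let $\phi\in H_0^1(\Omega)$ solve $-\Delta\phi=\varepsilon_h^z$ in $\Omega$ with $\phi=0$ on $\partial\Omega$. Because $\Omega$ is convex, elliptic regularity yields $\|\phi\|_{2,\Omega}\le C\|\varepsilon_h^z\|_{\mathcal T_h}$; in particular $\phi\in H^2(\Omega)$, so $\nabla\phi$ has a single-valued normal trace $\nabla\phi\cdot\bm n$ across each interior face.

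Next I would write $\|\varepsilon_h^z\|_{\mathcal T_h}^2=-(\varepsilon_h^z,\Delta\phi)_{\mathcal T_h}$ and integrate by parts element by element to obtain
\begin{align*}
\|\varepsilon_h^z\|_{\mathcal T_h}^2=(\nabla\varepsilon_h^z,\nabla\phi)_{\mathcal T_h}-\langle \varepsilon_h^z,\nabla\phi\cdot\bm n\rangle_{\partial\mathcal T_h}.
\end{align*}
The crucial step is to subtract the single-valued trace $\varepsilon_h^{\widehat z}$ for free: since $\varepsilon_h^{\widehat z}\in\widetilde M_h(o)$ is single-valued on interior faces, $\nabla\phi\cdot\bm n$ is continuous across interior faces, and $\varepsilon_h^{\widehat z}=0$ on $\varepsilon_h^\partial$, the contributions from adjacent elements cancel and $\langle \varepsilon_h^{\widehat z},\nabla\phi\cdot\bm n\rangle_{\partial\mathcal T_h}=0$. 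Hence the face term may be rewritten as $\langle \varepsilon_h^z-\varepsilon_h^{\widehat z},\nabla\phi\cdot\bm n\rangle_{\partial\mathcal T_h}$, which is precisely the jump-type quantity appearing on the right-hand side.

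Finally I would estimate the two terms. The volume term is controlled by Cauchy--Schwarz and the regularity bound, $(\nabla\varepsilon_h^z,\nabla\phi)_{\mathcal T_h}\le \|\nabla\varepsilon_h^z\|_{\mathcal T_h}\|\phi\|_{1,\Omega}\lesssim \|\nabla\varepsilon_h^z\|_{\mathcal T_h}\|\varepsilon_h^z\|_{\mathcal T_h}$. For the face term, a standard scaled trace inequality applied componentwise to $\nabla\phi$ gives $\|\nabla\phi\|_{\partial\mathcal T_h}\lesssim h^{-1/2}\|\phi\|_{2,\Omega}\lesssim h^{-1/2}\|\varepsilon_h^z\|_{\mathcal T_h}$ (using $h\le1$), so that $\langle \varepsilon_h^z-\varepsilon_h^{\widehat z},\nabla\phi\cdot\bm n\rangle_{\partial\mathcal T_h}\lesssim h^{-1/2}\|\varepsilon_h^z-\varepsilon_h^{\widehat z}\|_{\partial\mathcal T_h}\|\varepsilon_h^z\|_{\mathcal T_h}$. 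Combining these bounds and dividing through by $\|\varepsilon_h^z\|_{\mathcal T_h}$ yields the claim. I expect the main obstacle to be the face-term bookkeeping in the crucial step: one must verify carefully that the single-valued trace contracts against the continuous normal derivative to vanish (with the correct treatment of boundary faces, where both $\phi=0$ and $\varepsilon_h^{\widehat z}=0$), and that the trace inequality is applied to the merely $H^1$ function $\nabla\phi$ with the sharp $h^{-1/2}$ scaling rather than via the polynomial inverse estimate in \eqref{classical_ine}.
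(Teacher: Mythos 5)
Your proof is correct. The paper itself omits the proof of this lemma, stating only that it is essentially the same as a result from \cite{MR3440284}; your duality argument --- solving $-\Delta\phi=\varepsilon_h^z$, $\phi|_{\partial\Omega}=0$ on the convex domain, integrating by parts elementwise, inserting the single-valued trace $\varepsilon_h^{\widehat z}$ at no cost (using that $\nabla\phi\cdot\bm n$ is continuous across interior faces and $\varepsilon_h^{\widehat z}=0$ on $\varepsilon_h^{\partial}$), and bounding the face term with the scaled trace inequality for $H^1$ functions rather than the polynomial inverse estimate of \eqref{classical_ine} --- is precisely that standard argument, so it correctly supplies the omitted details.
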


\begin{lemma}\label{e_sec}
	We have
	\begin{align}
	\|\varepsilon_h^{\bm p}\|_{\mathcal T_h}+h^{-\frac{1}{2}}\|\varepsilon_h^z-\varepsilon_h^{\widehat z}\|_{\partial\mathcal T_h} &\lesssim h^{k+1}(|\bm q|_{k+1}+|y|_{k+2}+|\bm p|_{k+1}+|z|_{k+2}),\label{error_es_p}\\
		\|\varepsilon_h^{ z}\|_{\mathcal T_h}& \lesssim h^{k+1}(|\bm q|_{k+1}+|y|_{k+2}+|\bm p|_{k+1}+|z|_{k+2}).\label{error_es_z_inter}
	\end{align}
\end{lemma}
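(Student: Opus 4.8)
The plan is to establish the energy estimate \eqref{error_es_p} by the same energy argument used for the primary variables in \Cref{energy_norm_q}, and then to read off the intermediate $L^2$ bound \eqref{error_es_z_inter} from the discrete Poincar\'e inequality. First I would take $(\bm r_2,w_2,\mu_2)=(\varepsilon_h^{\bm p},\varepsilon_h^z,\varepsilon_h^{\widehat z})$ in the error equation \eqref{error_z} of \Cref{lemma:step1_first_lemma}. Since $\varepsilon_h^{\widehat z}=0$ on $\varepsilon_h^\partial$, the two $\widehat{\bm\delta}_2$ boundary terms collapse into the single term $-\langle\widehat{\bm\delta}_2,\varepsilon_h^z-\varepsilon_h^{\widehat z}\rangle_{\partial\mathcal T_h}$, exactly as in Step~2. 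The left-hand side is $\mathscr B_2(\varepsilon_h^{\bm p},\varepsilon_h^z,\varepsilon_h^{\widehat z};\varepsilon_h^{\bm p},\varepsilon_h^z,\varepsilon_h^{\widehat z})$, which by \Cref{property_B} equals
\begin{align*}
\|\varepsilon_h^{\bm p}\|_{\mathcal T_h}^2 &+ \langle (h^{-1}+\tau_2+\tfrac12\bm\beta\cdot\bm n)(\varepsilon_h^z-\varepsilon_h^{\widehat z}),\varepsilon_h^z-\varepsilon_h^{\widehat z}\rangle_{\partial\mathcal T_h\backslash\varepsilon_h^\partial} \\
&-\tfrac12(\nabla\cdot\bm\beta\,\varepsilon_h^z,\varepsilon_h^z)_{\mathcal T_h} + \langle (h^{-1}+\tau_2+\tfrac12\bm\beta\cdot\bm n)\varepsilon_h^z,\varepsilon_h^z\rangle_{\varepsilon_h^\partial}.
\end{align*}
By the positivity condition \eqref{eqn:tau1_condition} and the structural assumption \eqref{beta_con} that $\nabla\cdot\bm\beta\le0$, this is bounded below by $\|\varepsilon_h^{\bm p}\|_{\mathcal T_h}^2 + c\,h^{-1}\|\varepsilon_h^z-\varepsilon_h^{\widehat z}\|_{\partial\mathcal T_h}^2$ for some $c>0$.

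The right-hand side of \eqref{error_z} produces the four terms $R_1=-\langle\delta^{\widehat z},\varepsilon_h^{\bm p}\cdot\bm n\rangle_{\partial\mathcal T_h}$, $R_2=-(\bm\beta\delta^z,\nabla\varepsilon_h^z)_{\mathcal T_h}$, $R_3=-\langle\widehat{\bm\delta}_2,\varepsilon_h^z-\varepsilon_h^{\widehat z}\rangle_{\partial\mathcal T_h}$, and the new coupling term $R_4=(y_h(u)-y,\varepsilon_h^z)_{\mathcal T_h}$. The first three are handled verbatim as in \Cref{energy_norm_q}: Cauchy--Schwarz, the trace and inverse inequalities, the gradient bound \eqref{energy_z} (to convert $\nabla\varepsilon_h^z$ into the energy quantities), and the projection estimates \eqref{classical_ine} yield a bound of order $h^{2(k+1)}(|\bm p|_{k+1}^2+|z|_{k+2}^2)$ together with pieces absorbable into the left-hand side.

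The genuinely new ingredient, and the step I expect to be the main obstacle, is the coupling term $R_4$, which is absent in the primary estimate. I cannot directly absorb $\|\varepsilon_h^z\|_{\mathcal T_h}$ because that quantity is not itself one of the energy norms controlled on the left. The resolution is to chain the gradient bound \eqref{energy_z} with the discrete Poincar\'e inequality \eqref{poincare} to obtain $\|\varepsilon_h^z\|_{\mathcal T_h}\lesssim \|\varepsilon_h^{\bm p}\|_{\mathcal T_h}+h^{-1/2}\|\varepsilon_h^z-\varepsilon_h^{\widehat z}\|_{\partial\mathcal T_h}$, i.e. to control $\|\varepsilon_h^z\|_{\mathcal T_h}$ by precisely the two quantities appearing (squared) on the left. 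Combining this with the already-established optimal estimate $\|y-y_h(u)\|_{\mathcal T_h}\lesssim h^{k+2}(|\bm q|_{k+1}+|y|_{k+2})$ from \Cref{le}, Young's inequality gives $R_4\lesssim h^{k+2}\big(\|\varepsilon_h^{\bm p}\|_{\mathcal T_h}+h^{-1/2}\|\varepsilon_h^z-\varepsilon_h^{\widehat z}\|_{\partial\mathcal T_h}\big)$, which is absorbed into the left-hand side at the cost of an $O(h^{2(k+2)})$ remainder.

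Summing all contributions, absorbing the small multiples of $\|\varepsilon_h^{\bm p}\|_{\mathcal T_h}^2$ and $h^{-1}\|\varepsilon_h^z-\varepsilon_h^{\widehat z}\|_{\partial\mathcal T_h}^2$, and using $h\le1$ so that $h^{2(k+2)}\le h^{2(k+1)}$, yields \eqref{error_es_p}. Finally, \eqref{error_es_z_inter} follows immediately by substituting \eqref{error_es_p} into the chained bound $\|\varepsilon_h^z\|_{\mathcal T_h}\lesssim \|\varepsilon_h^{\bm p}\|_{\mathcal T_h}+h^{-1/2}\|\varepsilon_h^z-\varepsilon_h^{\widehat z}\|_{\partial\mathcal T_h}$; note that this only delivers the suboptimal rate $h^{k+1}$ for $\varepsilon_h^z$, which is why it is recorded as an intermediate estimate, to be upgraded to $h^{k+2}$ by a separate duality argument in the next step.
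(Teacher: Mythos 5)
Your proposal is correct and follows essentially the same route as the paper's own proof: the same choice of test functions $(\varepsilon_h^{\bm p},\varepsilon_h^z,\varepsilon_h^{\widehat z})$ in \eqref{error_z}, the same energy identity from \Cref{property_B} with positivity supplied by \eqref{eqn:tau1_condition} and $\nabla\cdot\bm\beta\le 0$, the same treatment of the three Step-2-style terms, and the same key handling of the coupling term $(y_h(u)-y,\varepsilon_h^z)_{\mathcal T_h}$ by chaining \eqref{energy_z} with the discrete Poincar\'e inequality \eqref{poincare} and invoking \Cref{le}. Your concluding passage from \eqref{error_es_p} to \eqref{error_es_z_inter} via the chained bound is also exactly how the paper closes the argument.
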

\begin{proof}
	Since $\varepsilon_h^{\widehat{z}}=0$ on $\varepsilon_h^{\partial}$, the energy identity for $\mathscr B_2$ in  \Cref{property_B} gives
	\begin{align*}
		\mathscr B_2&(\varepsilon_h^{\bm p},\varepsilon_h^z,\varepsilon_h^{\widehat{z}},\varepsilon_h^{\bm p},\varepsilon_h^z,\varepsilon_h^{\widehat{z}})\\
		&=\|\varepsilon_h^{\bm p}\|_{\mathcal{T}_h}^2+\| (h^{-1}+\tau_2+\frac{1}{2}\bm \beta\cdot\bm n)^{\frac{1}{2}} (\varepsilon_h^z-\varepsilon_h^{\widehat{z}}) \|_{\partial \mathcal{T}_h}^2+\frac{1}{2}\| (-\nabla\bm \beta)^{\frac{1}{2}}\varepsilon_h^z \|_{\mathcal{T}_h}^2.
	\end{align*}
	Take $(\bm r_2,w_2,\mu_2)=(\varepsilon_h^{\bm p},\varepsilon_h^z,\varepsilon_h^{\widehat{z}})$ in the error equation \eqref{error_z} to obtain
	\begin{align*}
		\|\varepsilon_h^{\bm p}\|_{\mathcal{T}_h}^2&+\| (h^{-1}+\tau_2+\frac{1}{2}\bm \beta\cdot\bm n)^{\frac{1}{2}} (\varepsilon_h^z-\varepsilon_h^{\widehat{z}}) \|_{\partial \mathcal{T}_h}^2+\frac{1}{2}\| (-\nabla\bm \beta)^{\frac{1}{2}}\varepsilon_h^z \|_{\mathcal{T}_h}^2\\
		&=-\langle \delta^{\widehat{z}},\bm \varepsilon_h^{\bm p}\cdot\bm n \rangle_{\partial \mathcal{T}_h}-(\bm \beta\delta^z,\nabla \varepsilon
		_h^z)_{\mathcal{T}_h}\nonumber\\
		&\quad-\langle\widehat{\bm \delta}_2,\varepsilon_h^z-\varepsilon_h^{\widehat{z}}\rangle_{\partial \mathcal{T}_h}+(y_h(u)-y,\varepsilon_h^z)_{\mathcal{T}_h}\\
		&=: T_1+T_2+T_3+T_4.
	\end{align*}
	By the same argument as in the proof of  \Cref{energy_norm_q}, apply \eqref{energy_z} and \eqref{poincare} to get
	\begin{align*}
		T_1 &\lesssim h^{-\frac{1}{2}}\| \delta^{\widehat{z}} \|_{\partial \mathcal{T}_h} \| \varepsilon_h^{\bm p} \|_{\mathcal{T}_h},\\
		T_2 &\lesssim \| \bm \beta \|_{0,\infty,\Omega} \| \delta^z\|_{\mathcal{T}_h}\| \nabla\varepsilon_h^z \|_{\mathcal{T}_h}\\
		&\lesssim \| \bm \beta \|_{0,\infty,\Omega} \| \delta^z\|_{\mathcal{T}_h}(\| \varepsilon_h^{\bm p} \|_{\mathcal{T}_h}+h^{-\frac{1}{2}}\| \varepsilon_h^z-\varepsilon_h^{\widehat{z}} \|_{\partial \mathcal{T}_h}),\\
		T_3 &\lesssim h^{\frac{1}{2}}\| \widehat{\bm \delta}_2 \|_{\partial \mathcal{T}_h}h^{-\frac{1}{2}}\| \varepsilon_h^z-\varepsilon_h^{\widehat{z}} \|_{\partial \mathcal{T}_h},\\
		T_4 &\lesssim \| y-y_h(u) \|_{\mathcal{T}_h}\| \varepsilon_h^z \|_{\mathcal{T}_h}\\
		&\lesssim \| y-y_h(u) \|_{\mathcal{T}_h} (\| \varepsilon_h^{\bm p} \|_{\mathcal{T}_h}+h^{-\frac{1}{2}}\| \varepsilon_h^z-\varepsilon_h^{\widehat{z}} \|_{\partial \mathcal{T}_h}).
	\end{align*}
	Finally, applying \eqref{classical_ine} and  \Cref{le} yields \eqref{error_es_p}. Together with \eqref{error_es_p} and \eqref{poincare}, we can obtain \eqref{error_es_z_inter}.
\end{proof}

\subsubsection{Step 6: Estimate for $\varepsilon_h^z$ by a duality argument.}

Next, we introduce the dual problem for any given $\Theta$ in $L^2(\Omega)$:
\begin{equation}\label{Dual_PDE2}
\begin{split}
\bm\Phi+\nabla\Psi&=0\qquad\qquad~\text{in}\ \Omega,\\
\nabla\cdot\bm{\Phi}-\bm \beta\cdot\nabla \Psi&=\Theta \qquad\text{in}\ \Omega,\\
\Psi&=0\qquad \ \text{on}\ \partial\Omega.
\end{split}
\end{equation}
Since the domain $\Omega$ is convex, we have the following regularity estimate
\begin{align}\label{dual_esti2}
\norm{\bm \Phi}_{1,\Omega} + \norm{\Psi}_{2,\Omega} \le C_{\text{reg}} \norm{\Theta}_\Omega.
\end{align}

\begin{lemma}
	We have
	\begin{subequations}
		\begin{align}
		\|\varepsilon^z_h\|_{\mathcal T_h} &\lesssim h^{k+2}(|\bm q|_{k+1}+|y|_{k+2}+|\bm p|_{k+1}+|z|_{k+2}).\label{var_z}
		\end{align}
	\end{subequations}
\end{lemma}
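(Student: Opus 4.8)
The plan is to mirror the duality argument of \Cref{dual_y}, now applied to the dual-state error $\varepsilon_h^z$ via the dual problem \eqref{Dual_PDE2}. I would set $\Theta=-\varepsilon_h^z$ in \eqref{Dual_PDE2}, obtaining $(\bm\Phi,\Psi)$ with the convex-domain regularity bound \eqref{dual_esti2}, namely $\|\bm\Phi\|_{1,\Omega}+\|\Psi\|_{2,\Omega}\lesssim\|\varepsilon_h^z\|_{\mathcal T_h}$. I would then test the error equation \eqref{error_z} of \Cref{lemma:step1_first_lemma} with the same type of choice used for the primal variables, $(\bm r_2,w_2,\mu_2)=(\bm\Pi_V\bm\Phi,-\Pi_W\Psi,-\mathcal I_h\Psi)$, with the dual projection errors $\delta^{\bm\Phi},\delta^\Psi,\delta^{\widehat\Psi}$ defined exactly as in \eqref{notation_2}.

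Next I would evaluate $\mathscr B_2(\varepsilon_h^{\bm p},\varepsilon_h^z,\varepsilon_h^{\widehat z};\bm\Pi_V\bm\Phi,-\Pi_W\Psi,-\mathcal I_h\Psi)$ in two ways. On one hand, expanding the definition \eqref{def_B2}, inserting the $L^2$-projection properties \eqref{def_L2}, integrating by parts element-by-element, and finally using the three equations of \eqref{Dual_PDE2} with $\Theta=-\varepsilon_h^z$, the volume contributions collapse to $(\varepsilon_h^z,\varepsilon_h^z)_{\mathcal T_h}=\|\varepsilon_h^z\|_{\mathcal T_h}^2$, leaving boundary remainders pairing $\varepsilon_h^{\bm p}$ and $\varepsilon_h^z-\varepsilon_h^{\widehat z}$ against $\delta^{\bm\Phi},\delta^\Psi,\delta^{\widehat\Psi}$, plus the volume term $(\bm\beta\cdot\nabla\varepsilon_h^z,\delta^\Psi)_{\mathcal T_h}$. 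On the other hand, this equals the right-hand side of \eqref{error_z} at these test functions, giving $-\langle\delta^{\widehat z},\bm\Pi_V\bm\Phi\cdot\bm n\rangle$, $(\bm\beta\delta^z,\nabla\Pi_W\Psi)$, $\langle\widehat{\bm\delta}_2,\Pi_W\Psi-\mathcal I_h\Psi\rangle$, and crucially the coupling term $-(y_h(u)-y,\Pi_W\Psi)_{\mathcal T_h}$. Equating the two representations isolates $\|\varepsilon_h^z\|_{\mathcal T_h}^2$ as a finite sum $\sum_i T_i$.

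I would then bound each $T_i$ as in \Cref{dual_y}: the purely projective boundary terms produce factors $h^{1/2}\|\delta^{\widehat z}\|_{\partial\mathcal T_h}$ or $h^{3/2}\|\widehat{\bm\delta}_2\|_{\partial\mathcal T_h}$ against $\|\bm\Phi\|_{1,\Omega},\|\Psi\|_{2,\Omega}$; the mixed terms pair the energy quantities $\|\varepsilon_h^{\bm p}\|_{\mathcal T_h}$ and $h^{-1/2}\|\varepsilon_h^z-\varepsilon_h^{\widehat z}\|_{\partial\mathcal T_h}$ (both $\lesssim h^{k+1}$ by \Cref{e_sec}) against the $O(h)$ dual projection errors, and $(\bm\beta\cdot\nabla\varepsilon_h^z,\delta^\Psi)$ is controlled through \eqref{energy_z}. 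Using $\|\delta^{\widehat z}\|_{\partial\mathcal T_h},\|\widehat{\bm\delta}_2\|_{\partial\mathcal T_h}\lesssim h^{k+1/2}$ together with \eqref{classical_ine}, every such term is $\lesssim h^{k+2}(|\bm q|_{k+1}+|y|_{k+2}+|\bm p|_{k+1}+|z|_{k+2})\|\varepsilon_h^z\|_{\mathcal T_h}$.

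The genuinely new feature, and the step I expect to be the main obstacle, is the coupling term $(y_h(u)-y,\Pi_W\Psi)_{\mathcal T_h}$, which has no analogue in \Cref{dual_y}: the duality for the dual state inherits the primal-state error. The key point is that the optimal estimate $\|y-y_h(u)\|_{\mathcal T_h}\lesssim h^{k+2}(|\bm q|_{k+1}+|y|_{k+2})$ from \Cref{le} is already available, so with $\|\Pi_W\Psi\|_{\mathcal T_h}\lesssim\|\Psi\|_{2,\Omega}\lesssim\|\varepsilon_h^z\|_{\mathcal T_h}$ this term is also $\lesssim h^{k+2}(\cdots)\|\varepsilon_h^z\|_{\mathcal T_h}$. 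Summing all contributions gives $\|\varepsilon_h^z\|_{\mathcal T_h}^2\lesssim h^{k+2}(\cdots)\|\varepsilon_h^z\|_{\mathcal T_h}$, and dividing by $\|\varepsilon_h^z\|_{\mathcal T_h}$ yields \eqref{var_z}. The only real care is in tracking signs through the integration-by-parts identities (as in the displayed manipulations of \Cref{dual_y}) and verifying that the boundary contributions carried by $\widehat{\bm\delta}_2$ in \eqref{notation_1} collapse so that the $h^{-1}$ stabilization weight is absorbed without loss of order.
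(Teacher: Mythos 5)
Your proposal is correct and follows essentially the same route as the paper's own proof: a duality argument with the dual problem \eqref{Dual_PDE2}, testing the error equation \eqref{error_z} with $(\bm\Pi_V\bm\Phi,-\Pi_W\Psi,-\mathcal I_h\Psi)$, comparing the two evaluations of $\mathscr B_2$ to isolate $\|\varepsilon_h^z\|_{\mathcal T_h}^2$, bounding the projection/boundary remainders via \Cref{e_sec}, \eqref{energy_z}, and \eqref{classical_ine}, and handling the coupling term $(y_h(u)-y,\Pi_W\Psi)_{\mathcal T_h}$ with the optimal rate from \Cref{le} --- exactly the paper's terms $R_1$--$R_8$. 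The only cosmetic difference is your sign convention $\Theta=-\varepsilon_h^z$ versus the paper's $\Theta=\varepsilon_h^z$, which is immaterial.
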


\begin{proof}
	Consider the dual problem \eqref{Dual_PDE2}, and let $\Theta = \varepsilon_h^z$.  Take  $(\bm r_2,w_2,\mu_2) = ( {\bm\Pi}_V\bm{\Phi},-{\Pi}_W\Psi,-I_h\Psi)$ in \eqref{error_z} in  \Cref{lemma:step1_first_lemma}.  Since $\Psi=0$ on $\varepsilon_h^{\partial}$ we have
	\begin{align*}
	\hspace{1em}&\hspace{-1em}  \mathscr B_2 (\varepsilon^{\bm p}_h,\varepsilon^z_h,\varepsilon^{\widehat z}_h;{\bm\Pi}_V\bm{\Phi},-{\Pi}_W\Psi,-I_h\Psi)\\
	&= (\varepsilon^{\bm p}_h,{\bm\Pi}_V\bm{\Phi})_{\mathcal T_h}-( \varepsilon^z_h,\nabla\cdot{\bm\Pi}_V\bm{\Phi})_{\mathcal T_h}+\langle \varepsilon^{\widehat z}_h,{\bm\Pi}_V\bm{\Phi} \cdot\bm n\rangle_{\partial\mathcal T_h}\\
	&\quad+(\varepsilon^{\bm p}_h-\bm \beta \varepsilon^z_h,  \nabla {\Pi}_W\Psi)_{\mathcal T_h}
	-\langle  \varepsilon^{\bm p}_h\cdot\bm n-\bm \beta\cdot\bm n\varepsilon^{\widehat z}_h +\tau_2(\varepsilon^z_h-\varepsilon^{\widehat z}_h ), {\Pi}_W\Psi -\mathcal{I}_h \Psi\rangle_{\partial\mathcal T_h}.	
	\end{align*}	
	Moreover, we have
	\begin{align*}
	-(\varepsilon_h^z,\nabla\cdot \bm \Pi_V \bm \Phi)_{\partial \mathcal{T}_h}&=(\nabla \varepsilon_h^z,\bm \Phi)_{\mathcal{T}_h}-\langle \varepsilon_h^z,\bm \Pi_V\bm\Phi \cdot \bm n\rangle_{\partial \mathcal{T}_h}\\
	&=-(\varepsilon_h^z,\nabla\cdot \bm \Phi)_{\mathcal{T}_h}+\langle \varepsilon_h^z,\delta^{\bm \Phi} \cdot \bm n\rangle_{\partial \mathcal{T}_h},
	\end{align*}
	\begin{align*}
	( \varepsilon_h^{\bm p},\nabla \Pi_W \Psi)_{\mathcal{T}_h}&=-(\nabla\cdot \varepsilon_h^{\bm p}, \Psi)_{\mathcal{T}_h}+\langle \varepsilon_h^{\bm p}\cdot\bm n,\Pi_W \Psi \rangle_{\partial \mathcal{T}_h}\\
	&=(\varepsilon_h^{\bm p},\nabla \Psi)_{\mathcal{T}_h}-\langle \varepsilon_h^{\bm p}\cdot\bm n,\delta^\Psi \rangle_{\partial \mathcal{T}_h},
	\end{align*}
	\begin{align*}
	-(\bm \beta \varepsilon_h^z,\nabla \Pi_W \Psi)_{\mathcal{T}_h}&=-(\bm \beta\varepsilon_h^z,\nabla\delta^{\Psi})_{\mathcal{T}_h}+(\bm \beta\varepsilon_h^z,\nabla {\Psi})_{\mathcal{T}_h}\\
	&=-\langle \bm \beta\cdot\bm n \varepsilon_h^z,\delta^{\Psi} \rangle_{\partial \mathcal{T}_h}+(\nabla\cdot \bm \beta \varepsilon_h^z,\delta^{\Psi})_{\mathcal{T}_h}\\
	&\quad +(\bm \beta \cdot
	 \nabla \varepsilon_h^z,\delta^{\Psi})_{\mathcal{T}_h}+(\bm \beta\varepsilon_h^z,\nabla {\Psi})_{\mathcal{T}_h}.
	\end{align*}
	Then we have
	\begin{align*}
	\hspace{1em}&\hspace{-1em}  \mathscr B_2 (\varepsilon^{\bm p}_h,\varepsilon^z_h,\varepsilon^{\widehat z}_h;{\bm\Pi}_V\bm{\Phi},-{\Pi}_W\Psi,-I_h\Psi)\\
	&= (\varepsilon^{\bm p}_h, \bm{\Phi})_{\mathcal T_h}-(\varepsilon_h^z,\nabla\cdot \bm \Phi)_{\mathcal{T}_h}+\langle \varepsilon_h^z,\delta^{\bm \Phi} \cdot \bm n\rangle_{\partial \mathcal{T}_h}+\langle \varepsilon^{\widehat z}_h,{\bm\Pi}_V\bm{\Phi} \cdot\bm n\rangle_{\partial\mathcal T_h}\\
	&\quad +(\varepsilon_h^{\bm p},\nabla \Psi)_{\mathcal{T}_h}-\langle \varepsilon_h^{\bm p}\cdot\bm n,\delta^{\widehat{\Psi}} \rangle_{\partial \mathcal{T}_h} -\langle \bm \beta\cdot\bm n \varepsilon_h^z,\delta^{\Psi} \rangle_{\partial \mathcal{T}_h}\\
	&\quad +(\nabla\cdot\bm \beta\varepsilon_h^z,\delta^\Psi)_{\mathcal{T}_h}+(\bm \beta\cdot\nabla\varepsilon_h^z,\delta^\Psi)_{\mathcal{T}_h}+(\varepsilon_h^z,\bm \beta\cdot \nabla\Psi)_{\mathcal{T}_h}\\
	&\quad +\langle \bm \beta\cdot\bm n \varepsilon_h^{\widehat{z}},\delta^\Psi \rangle_{\partial \mathcal{T}_h}
	+\langle  \tau_2(\varepsilon^z_h-\varepsilon^{\widehat z}_h ), \delta^\Psi-\delta^{\widehat{\Psi}}\rangle_{\partial\mathcal T_h}\\
	&= (\varepsilon_h^z,\varepsilon_h^z)_{\mathcal{T}_h}+\langle \varepsilon_h^z-\varepsilon_h^{\widehat{z}},\delta^{\bm \Phi}\cdot\bm n \rangle_{\partial \mathcal{T}_h}-\langle \varepsilon_h^{\bm p}\cdot\bm n,\delta^{\widehat{\Psi}} \rangle_{\partial \mathcal{T}_h}+(\nabla\cdot\bm \beta \varepsilon_h^z,\delta^\Psi)_{\mathcal{T}_h}\\
	&\quad +(\bm \beta\cdot\nabla\varepsilon_h^z,\delta^\Psi)_{\mathcal{T}_h} -\langle \bm \beta\cdot\bm n (\varepsilon_h^z-\varepsilon_h^{\widehat{z}}),\delta^\Psi \rangle_{\partial \mathcal{T}_h}\\
	&\quad+\langle  (\tau_2+h^{-1})(\varepsilon^z_h-\varepsilon^{\widehat z}_h ), \delta^\Psi-\delta^{\widehat{\Psi}}\rangle_{\partial\mathcal T_h}.
	\end{align*}
	Here, we used $\langle\varepsilon^{\widehat z}_h,\bm \Phi\cdot\bm n\rangle_{\partial\mathcal T_h}=0$, which holds since  $\varepsilon^{\widehat z}_h$ is single-valued function on interior edges and $\varepsilon^{\widehat z}_h=0$ on $\varepsilon^{\partial}_h$.  We also used $ \langle \bm \beta\cdot \bm n \varepsilon_h^{\widehat{z}},\delta^{\widehat{\Psi}} \rangle_{\partial \mathcal{T}_h}=0 $, which is derived similarly.
	%
	
	On the other hand, by  \Cref{lemma:step1_first_lemma}
	\begin{align}
	\hspace{3em}&\hspace{-3em} \mathscr B_2(\varepsilon^{\bm p}_h,\varepsilon^z_h,\varepsilon^{\widehat{z}}_h;\bm \Pi_V \bm \Phi,-\Pi_V \Psi,-\mathcal{I}_h \Psi) \ \nonumber\\
	&=-\langle \delta^{\widehat{z}},\bm \Pi_V \bm \Phi\cdot\bm n \rangle_{\partial \mathcal{T}_h}+(\bm \beta\delta^z,\nabla \Pi_V \Psi)_{\mathcal{T}_h}\nonumber\\
	&\quad+\langle\widehat{\bm \delta}_2,\Pi_V \Psi-\mathcal{I}_h \Psi\rangle_{\partial \mathcal{T}_h}-(y_h(u)-y,\Pi_V \Psi)_{\mathcal{T}_h}.
	\end{align}
	Comparing the above two equalities gives
	\begin{align*}
	\|  \varepsilon_h^z\|_{\mathcal T_h}^2 & = -\langle \varepsilon_h^z-\varepsilon_h^{\widehat{z}},\delta^{\bm \Phi}\cdot\bm n+(\tau_2+h^{-1})(\delta^\Psi-\delta^{\widehat{\Psi}})-\bm \beta\cdot\bm n \delta^\Psi \rangle_{\partial \mathcal{T}_h}\\
	&\quad +\langle \varepsilon_h^{\bm p}\cdot\bm n,\delta^{\widehat{\Psi}} \rangle_{\partial \mathcal{T}_h}+(\bm \beta\cdot\nabla\varepsilon_h^z,\delta^\Psi)_{\mathcal{T}_h}+(\nabla\cdot\bm \beta\varepsilon_h^z,\delta^\Psi)_{\mathcal{T}_h}\\
	&\quad-\langle \delta^{\widehat{z}},\delta^{\bm \Phi}\cdot\bm n \rangle_{\partial \mathcal{T}_h}+(\bm \beta\delta^z,\nabla \Pi_V \Psi)_{\mathcal{T}_h}\nonumber\\
	&\quad+\langle\widehat{\bm \delta}_2,\Pi_V \Psi-\mathcal{I}_h \Psi\rangle_{\partial \mathcal{T}_h}-(y_h(u)-y,\Pi_V \Psi)_{\mathcal{T}_h}\\
	&=:  \sum_{i=1}^8 R_i.
	\end{align*}
 For the terms $R_1$-$R_4$,  \Cref{e_sec} gives
	\begin{align*}
	R_1 &= -\langle \varepsilon^z_h - \varepsilon^{\widehat z}_h,\delta^{\bm \Phi}\cdot\bm n - \bm{\beta}\cdot\bm n\delta^{\Psi}+(\tau_2+h^{-1})(\delta^{\Psi} - \delta^{\widehat \Psi})\rangle_{\partial\mathcal T_h}\\
	&\lesssim h^\frac{1}{2}\|(\tau_2+h^{-1}+\bm \beta\cdot\bm n)^{\frac{1}{2}} (\varepsilon_h^z-\varepsilon_h^{\widehat{z}} )\|_{\partial \mathcal{T}_h} (\| \bm \Phi \|_{1,\Omega}+\| \Psi \|_{1,\Omega}) \\
	&\lesssim h^\frac{1}{2}\|(\tau_2+h^{-1}+\bm \beta\cdot\bm n)^{\frac{1}{2}} (\varepsilon_h^z-\varepsilon_h^{\widehat{z}} )\|_{\partial \mathcal{T}_h} \| \varepsilon_h^z \|_{\mathcal{T}_h},\\
	R_2&\lesssim h^\frac{3}{2} \| \varepsilon_h^{\bm p} \|_{\partial \mathcal{T}_h}\| \Psi \|_{2,\Omega}\lesssim h^\frac{3}{2} \| \varepsilon_h^{\bm p} \|_{\partial \mathcal{T}_h}\| \varepsilon_h^z \|_{\mathcal{T}_h},\\
	R_3&\lesssim \| \bm \beta \|_{0,\infty,\Omega}h\| \nabla \varepsilon_h^z \|_{\mathcal{T}_h}\| \Psi \|_{1,\Omega},\\
	R_4&\lesssim h\| (-\nabla\cdot\bm \beta)^{\frac{1}{2}} \varepsilon_h^z \|_{\mathcal{T}_h}\| \Psi \|_{1,\Omega}\lesssim h\| (-\nabla\cdot\bm \beta)^{\frac{1}{2}} \varepsilon_h^z \|_{\mathcal{T}_h}\| \varepsilon_h^z \|_{\mathcal{T}_h}.
	\end{align*}
	For $R_5$, we have
	\begin{align*}
	R_5\lesssim  h^\frac{1}{2}\| \delta^{\widehat{z}} \|_{\partial \mathcal{T}_h}\| \varepsilon_h^z \|_{\mathcal{T}_h}.
	\end{align*}
	For the terms $R_6$ and $R_8$, we use the triangle inequality, the regularity estimate \eqref{dual_esti}, and the assumption $ h \leq 1 $ to give
	\begin{align*}
	R_6&\lesssim \| \bm \beta \|_{0,\infty,\Omega}\| \delta^z \|_{\mathcal{T}_h}(\|\nabla \delta^{\Psi}\|_{\mathcal{T}_h}+\| \Psi \|_{\mathcal{T}_h})\lesssim \| \bm \beta \|_{0,\infty,\Omega}\| \delta^z \|_{\mathcal{T}_h}\| \varepsilon_h^z \|_{\mathcal{T}_h},\\
	R_8&\lesssim \| y_h(u)-y \|_{\mathcal{T}_h}\| \varepsilon_h^z \|_{\mathcal{T}_h}.
	\end{align*}
	For the term $R_7$, 
	\begin{align*}
	R_7&\lesssim h^\frac{3}{2} \| \delta^{\bm p}\cdot\bm n+(\tau_1 +h^{-1} )(\delta^z-\delta^{\widehat{z}}) \|_{\partial \mathcal{T}_h}\| \Psi\|_{2,\Omega}\\
	&\lesssim h^\frac{3}{2}(\|\delta^{\bm p}\|_{\partial\mathcal{T}_h}+\|\delta^z \|_{\mathcal{T}_h}+\| \delta^{\widehat{z}} \|_{\partial \mathcal{T}_h})\| \varepsilon_h^z \|_{\mathcal{T}_h}.
	\end{align*}
	
	Summing $R_1$ to $R_8$, together with \eqref{classical_ine}, \eqref{energy_z}, \eqref{error_es_p}, and \eqref{error_es_z_inter} gives
	\begin{align*}
	\|\varepsilon^z_h\|_{\mathcal T_h}\lesssim h^{k+2}(|\bm q|_{k+1}+|y|_{k+2}+|\bm p|_{k+1}+|z|_{k+2}).
	\end{align*}
\end{proof}
The triangle inequality gives optimal convergence rates for $\|\bm p -\bm p_h(u)\|_{\mathcal T_h}$ and $\|z -z_h(u)\|_{\mathcal T_h}$:

\begin{lemma}\label{lemma:step3_conv_rates}
	\begin{subequations}
		\begin{align}
		\|\bm p -\bm p_h(u)\|_{\mathcal T_h} &\le \|\delta^{\bm p}\|_{\mathcal T_h} + \|\varepsilon_h^{\bm p}\|_{\mathcal T_h} \ \nonumber \\ 
		&\lesssim h^{k+1}(|\bm q|_{k+1}+|y|_{k+2}+|\bm p|_{k+1}+|z|_{k+2}),\\
		\|z -z_h(u)\|_{\mathcal T_h} &\le \|\delta^{z}\|_{\mathcal T_h} + \|\varepsilon_h^{z}\|_{\mathcal T_h} \ \nonumber \\
		& \lesssim  h^{k+2}(|\bm q|_{k+1}+|y|_{k+2}+|\bm p|_{k+1}+|z|_{k+2}).
		\end{align}
	\end{subequations}
\end{lemma}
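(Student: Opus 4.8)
The plan is to prove this lemma exactly as its counterpart \Cref{le} was established for the primary variables: the heavy lifting has already been done in \Cref{e_sec} and in the duality estimate \eqref{var_z}, so all that remains is to split each error into a projection part and a discrete part and apply the triangle inequality. Concretely, using the notation from \eqref{notation_1}, I would write $\bm p - \bm p_h(u) = \delta^{\bm p} + \varepsilon_h^{\bm p}$ and $z - z_h(u) = \delta^z + \varepsilon_h^z$, so that
\begin{align*}
\|\bm p - \bm p_h(u)\|_{\mathcal T_h} &\le \|\delta^{\bm p}\|_{\mathcal T_h} + \|\varepsilon_h^{\bm p}\|_{\mathcal T_h}, \\
\|z - z_h(u)\|_{\mathcal T_h} &\le \|\delta^z\|_{\mathcal T_h} + \|\varepsilon_h^z\|_{\mathcal T_h}.
\end{align*}

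For the first inequality, the projection term $\|\delta^{\bm p}\|_{\mathcal T_h}$ is bounded by $C h^{k+1}\|\bm p\|_{k+1,\Omega}$ using the first estimate in \eqref{classical_ine} (applied to $\bm p$ rather than $\bm q$), while the discrete term $\|\varepsilon_h^{\bm p}\|_{\mathcal T_h}$ is controlled by the already-established bound \eqref{error_es_p} in \Cref{e_sec}. Both contributions scale like $h^{k+1}$ times the combined seminorm $(|\bm q|_{k+1}+|y|_{k+2}+|\bm p|_{k+1}+|z|_{k+2})$, which yields the claimed optimal rate for the dual flux. Note the appearance of the primary-variable seminorms $|\bm q|_{k+1}$ and $|y|_{k+2}$ is inherited from \eqref{error_es_p}, since the dual auxiliary problem \eqref{EDG_u_b} is driven by $y_h(u)$ through the data $(y_h(u)-y_d,w_2)_{\mathcal T_h}$ and hence couples back to the state error.

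For the second inequality, the projection term $\|\delta^z\|_{\mathcal T_h}$ is bounded by $C h^{k+2}\|z\|_{k+2,\Omega}$ via the second estimate in \eqref{classical_ine}, and the discrete term $\|\varepsilon_h^z\|_{\mathcal T_h}$ is exactly the quantity estimated in \eqref{var_z}, which already achieves the superconvergent rate $h^{k+2}$. Adding the two contributions gives the stated $h^{k+2}$ bound for $\|z - z_h(u)\|_{\mathcal T_h}$. There is no genuine obstacle in this lemma: it is a pure assembly step, and the only point requiring any care is bookkeeping—ensuring that the seminorms on the right-hand sides of \eqref{error_es_p} and \eqref{var_z} are carried through faithfully so that the final bounds display the full coupled seminorm rather than only the dual seminorms $|\bm p|_{k+1}$ and $|z|_{k+2}$.
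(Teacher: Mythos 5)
Your proof is correct and follows exactly the paper's argument: the paper likewise obtains this lemma by the triangle inequality applied to the splittings $\bm p - \bm p_h(u) = \delta^{\bm p} + \varepsilon_h^{\bm p}$ and $z - z_h(u) = \delta^z + \varepsilon_h^z$, bounding the projection terms via \eqref{classical_ine} and the discrete terms via \Cref{e_sec} and \eqref{var_z}. Your bookkeeping of the coupled seminorms is also consistent with the paper.
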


\subsubsection{Step 7: Estimates for $\|u-u_h\|_{\mathcal T_h}$, $\norm {y-y_h}_{\mathcal T_h}$, and $\norm {z-z_h}_{\mathcal T_h}$.}

Next, we bound the error between the solutions of the auxiliary problem and the EDG discretization of the optimality system \eqref{EDG_full_discrete}.  We use these error bounds and the error bounds in  \Cref{le} and \Cref{lemma:step3_conv_rates} to obtain the main result.

The proofs in Steps 7 and 8 are similar to proofs in our earlier work \cite{HuShenSinglerZhangZheng_HDG_Dirichlet_control2}; we include the proofs here to make the final steps self-contained.

For the remaining steps, we denote 
\begin{equation*}
\begin{split}
\zeta_{\bm q} &=\bm q_h(u)-\bm q_h,\quad\zeta_{y} = y_h(u)-y_h,\quad\zeta_{\widehat y} = \widehat y_h^o(u)-\widehat y_h^o,\\
\zeta_{\bm p} &=\bm p_h(u)-\bm p_h,\quad\zeta_{z} = z_h(u)-z_h,\quad\zeta_{\widehat z} = \widehat z_h^o(u)-\widehat z_h^o.
\end{split}
\end{equation*}
%
Subtracting the auxiliary problem and the EDG problem gives the following error equations
\begin{subequations}\label{eq_yh}
	\begin{align}
	\mathscr B_1(\zeta_{\bm q},\zeta_y,\zeta_{\widehat y};\bm r_1, w_1,\mu_1)&=(u-u_h,w_1)_{\mathcal T_h}\label{eq_yh_yhu},\\
	\mathscr B_2(\zeta_{\bm p},\zeta_z,\zeta_{\widehat z};\bm r_2, w_2,\mu_2)&=-(\zeta_y, w_2)_{\mathcal T_h}\label{eq_zh_zhu}.
	\end{align}
\end{subequations}
\begin{lemma}
	We have
	\begin{align}\label{eq_uuh_yhuyh}
	\hspace{3em}&\hspace{-3em}  \gamma\|u-u_h\|^2_{\mathcal T_h}+\|y_h(u)-y_h\|^2_{\mathcal T_h}\nonumber\\
	&=( z_h+\gamma u_h,u-u_h)_{\mathcal T_h}-(z_h(u)+\gamma u,u-u_h)_{\mathcal T_h}.
	\end{align}
\end{lemma}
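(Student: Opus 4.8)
The plan is to convert the state-error term $\|y_h(u)-y_h\|_{\mathcal T_h}^2=\|\zeta_y\|_{\mathcal T_h}^2$ into an $L^2$-pairing against the control error $u-u_h$, and then merge it with $\gamma\|u-u_h\|_{\mathcal T_h}^2$ to produce the two optimality-residual inner products on the right. Everything follows from the two error equations \eqref{eq_yh} together with the adjoint identity in \Cref{identical_equa}; no optimality condition is needed for the equality itself, since the discrete optimality \eqref{EDG_full_discrete_e} enters only later, when this identity is estimated.

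First I test the two error equations against each other's unknowns. In \eqref{eq_yh_yhu} I take $(\bm r_1,w_1,\mu_1)=(\zeta_{\bm p},-\zeta_z,-\zeta_{\widehat z})$, and in \eqref{eq_zh_zhu} I take $(\bm r_2,w_2,\mu_2)=(-\zeta_{\bm q},\zeta_y,\zeta_{\widehat y})$; these are admissible because $\zeta_{\bm q},\zeta_{\bm p}\in\bm V_h$, $\zeta_y,\zeta_z\in W_h$, and $\zeta_{\widehat y},\zeta_{\widehat z}\in\widetilde M_h(o)$. The right-hand sides then reduce to $(u-u_h,-\zeta_z)_{\mathcal T_h}$ and $\mp(\zeta_y,\zeta_y)_{\mathcal T_h}$. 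Adding the two equations, the combined left-hand side is exactly $\mathscr B_1(\zeta_{\bm q},\zeta_y,\zeta_{\widehat y};\zeta_{\bm p},-\zeta_z,-\zeta_{\widehat z})+\mathscr B_2(\zeta_{\bm p},\zeta_z,\zeta_{\widehat z};-\zeta_{\bm q},\zeta_y,\zeta_{\widehat y})$, which vanishes by \Cref{identical_equa}. This leaves the compact identity $\|\zeta_y\|_{\mathcal T_h}^2=\pm(u-u_h,\zeta_z)_{\mathcal T_h}$, where $\zeta_z=z_h(u)-z_h$, relating the discrete state error to the control error and the adjoint-state error.

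Finally I add $\gamma\|u-u_h\|_{\mathcal T_h}^2=\gamma(u-u_h,u-u_h)_{\mathcal T_h}$ to this identity and regroup by bilinearity. Writing $\gamma(u-u_h)=\gamma u-\gamma u_h$ and combining with $\pm\zeta_z=\pm(z_h(u)-z_h)$, the common factor $u-u_h$ multiplies a second argument that reassembles into the discrete residual $z_h+\gamma u_h$ and the auxiliary residual $z_h(u)+\gamma u$; pairing each against $u-u_h$ yields the two inner products claimed on the right-hand side. The one delicate point---and the only place an error can creep in---is the sign bookkeeping: the negations built into \Cref{identical_equa}, those introduced by the chosen test triples, and the sign on the right of \eqref{eq_zh_zhu} jointly fix the sign of the surviving $\|\zeta_y\|_{\mathcal T_h}^2$, and hence the orientation of the two residual inner products in the final identity, so these must be tracked carefully and checked for consistency (for instance against the nonnegativity of the left-hand side).
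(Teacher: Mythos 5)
Your strategy is exactly the paper's: test \eqref{eq_yh_yhu} with $(\zeta_{\bm p},-\zeta_z,-\zeta_{\widehat z})$ and \eqref{eq_zh_zhu} with $(-\zeta_{\bm q},\zeta_y,\zeta_{\widehat y})$, cancel the two bilinear forms with \Cref{identical_equa}, and finish by bilinearity in the $\gamma$-term. The problem is that the $\pm/\mp$ you decline to resolve are not routine bookkeeping --- they are the entire content of the identity, and if you track them honestly you find the stated equality cannot be produced. First, subtracting \eqref{EDG_full_discrete_b} from \eqref{EDG_u_b} gives
\begin{equation*}
\mathscr B_2(\zeta_{\bm p},\zeta_z,\zeta_{\widehat z};\bm r_2,w_2,\mu_2)=\big(y_h(u)-y_h,w_2\big)_{\mathcal T_h}=+(\zeta_y,w_2)_{\mathcal T_h},
\end{equation*}
so the minus sign printed in \eqref{eq_zh_zhu} is itself a slip; your cross-testing argument then yields $-(u-u_h,\zeta_z)_{\mathcal T_h}+\|\zeta_y\|^2_{\mathcal T_h}=0$, i.e.\ $(u-u_h,\zeta_z)_{\mathcal T_h}=+\|\zeta_y\|^2_{\mathcal T_h}$. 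Second, pure bilinearity gives
\begin{equation*}
(z_h+\gamma u_h,u-u_h)_{\mathcal T_h}-(z_h(u)+\gamma u,u-u_h)_{\mathcal T_h}
=-(\zeta_z,u-u_h)_{\mathcal T_h}-\gamma\|u-u_h\|^2_{\mathcal T_h},
\end{equation*}
with a \emph{minus} on the $\gamma$-term (the first display of the paper's own proof has a plus there, a second slip). Combining the two, the right-hand side of \eqref{eq_uuh_yhuyh} equals $-\|\zeta_y\|^2_{\mathcal T_h}-\gamma\|u-u_h\|^2_{\mathcal T_h}$, the \emph{negative} of the left-hand side; the consistency check you yourself suggest (the left side is nonnegative) detects precisely this failure.

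So the gap in your proposal is real and cannot be closed as stated: the identity is true only with its right-hand side negated,
\begin{equation*}
\gamma\|u-u_h\|^2_{\mathcal T_h}+\|y_h(u)-y_h\|^2_{\mathcal T_h}
=(z_h(u)+\gamma u,u-u_h)_{\mathcal T_h}-(z_h+\gamma u_h,u-u_h)_{\mathcal T_h},
\end{equation*}
and the paper reaches the printed form only through the two compensating sign slips noted above. The damage is confined to this lemma: since $z+\gamma u=0$ and $z_h+\gamma u_h=0$ (take $w_3=z_h+\gamma u_h$ in \eqref{EDG_full_discrete_e}), the corrected identity gives $\gamma\|u-u_h\|^2_{\mathcal T_h}+\|\zeta_y\|^2_{\mathcal T_h}=(z_h(u)-z,u-u_h)_{\mathcal T_h}\le\|z_h(u)-z\|_{\mathcal T_h}\|u-u_h\|_{\mathcal T_h}$, which is all that \Cref{thm:estimates_u_y_z} actually uses, so every subsequent estimate survives. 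Had you committed to definite signs rather than writing $\pm$, you would have discovered all of this; as written, your proposal stops exactly where the substance of the lemma begins.
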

\begin{proof}
	First, we have
	\begin{align*}
	\hspace{3em}&\hspace{-3em}  ( z_h+\gamma u_h,u-u_h)_{\mathcal T_h}-( z_h(u)+\gamma u,u-u_h)_{\mathcal T_h}\\
	&=-(\zeta_{ z},u-u_h)_{\mathcal T_h}+\gamma\|u-u_h\|^2_{\mathcal T_h}.
	\end{align*}
	Next,  \Cref{identical_equa} gives
	\begin{align*}
	\mathscr B_1 &(\zeta_{\bm q},\zeta_y,\zeta_{\widehat{y}};\zeta_{\bm p},-\zeta_{z},-\zeta_{\widehat z}) + \mathscr B_2(\zeta_{\bm p},\zeta_z,\zeta_{\widehat z};-\zeta_{\bm q},\zeta_y,\zeta_{\widehat{y}})  = 0.
	\end{align*}
	On the other hand, using the definition of $ \mathscr B_1 $ and $ \mathscr B_2 $ gives
	\begin{align*}
	\hspace{3em}&\hspace{-3em}  \mathscr B_1 (\zeta_{\bm q},\zeta_y,\zeta_{\widehat{y}};\zeta_{\bm p},-\zeta_{z},-\zeta_{\widehat z}) + \mathscr B_2(\zeta_{\bm p},\zeta_z,\zeta_{\widehat z};-\zeta_{\bm q},\zeta_y,\zeta_{\widehat{y}})\\
	&= - ( u- u_h,\zeta_{ z})_{\mathcal{T}_h}-\|\zeta_{ y}\|^2_{\mathcal{T}_h}.
	\end{align*}
	Comparing the above two equalities gives
	\begin{align*}
	-(u-u_h,\zeta_{ z})_{\mathcal{T}_h}=\|\zeta_{ y}\|^2_{\mathcal{T}_h}.
	\end{align*}
	This completes the proof.
\end{proof}

\begin{theorem}\label{thm:estimates_u_y_z}
	We have
	\begin{subequations}
		\begin{align}\label{err_yhu_yh}
		\|u-u_h\|_{\mathcal T_h}&\lesssim h^{k+2}(|\bm q|_{k+1}+|y|_{k+2}+|\bm p|_{k+1}+|z|_{k+2}),\\
		\|y-y_h\|_{\mathcal T_h}&\lesssim h^{k+2}(|\bm q|_{k+1}+|y|_{k+2}+|\bm p|_{k+1}+|z|_{k+2}),\\
		\|z-z_h\|_{\mathcal T_h}&\lesssim h^{k+2}(|\bm q|_{k+1}+|y|_{k+2}+|\bm p|_{k+1}+|z|_{k+2}).
		\end{align}
	\end{subequations}
\end{theorem}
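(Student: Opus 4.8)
The plan is to reduce everything to the key identity \eqref{eq_uuh_yhuyh} together with the auxiliary-problem rates already established in \Cref{le} and \Cref{lemma:step3_conv_rates}. First I would exploit the two optimality conditions. On the discrete side, \eqref{EDG_full_discrete_e} holds for all $w_3\in W_h$ and $z_h+\gamma u_h\in W_h$, so taking $w_3 = z_h+\gamma u_h$ shows $z_h+\gamma u_h = 0$ pointwise; hence the first term on the right of \eqref{eq_uuh_yhuyh} vanishes, $(z_h+\gamma u_h,u-u_h)_{\mathcal T_h}=0$. On the continuous side, the optimality condition \eqref{eq_adeq_e} gives $\gamma u = -z$, so $z_h(u)+\gamma u = z_h(u)-z$. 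Substituting both into \eqref{eq_uuh_yhuyh} yields
\begin{align*}
\gamma\|u-u_h\|_{\mathcal T_h}^2 + \|y_h(u)-y_h\|_{\mathcal T_h}^2 = (z-z_h(u),\,u-u_h)_{\mathcal T_h}.
\end{align*}

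Applying Cauchy--Schwarz and Young's inequality to absorb $\tfrac{\gamma}{2}\|u-u_h\|_{\mathcal T_h}^2$ gives
\begin{align*}
\tfrac{\gamma}{2}\|u-u_h\|_{\mathcal T_h}^2 + \|y_h(u)-y_h\|_{\mathcal T_h}^2 \le \tfrac{1}{2\gamma}\|z-z_h(u)\|_{\mathcal T_h}^2,
\end{align*}
so both $\|u-u_h\|_{\mathcal T_h}$ and $\|\zeta_y\|_{\mathcal T_h}=\|y_h(u)-y_h\|_{\mathcal T_h}$ are controlled by $\|z-z_h(u)\|_{\mathcal T_h}$, which is $O(h^{k+2})$ by \Cref{lemma:step3_conv_rates}. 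This proves the estimate for $\|u-u_h\|_{\mathcal T_h}$. The bound for $\|y-y_h\|_{\mathcal T_h}$ then follows from the triangle inequality $\|y-y_h\|_{\mathcal T_h}\le \|y-y_h(u)\|_{\mathcal T_h}+\|\zeta_y\|_{\mathcal T_h}$, using \Cref{le} for the first term and the bound just obtained for the second.

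For $\|z-z_h\|_{\mathcal T_h}$ I would again split via $\|z-z_h\|_{\mathcal T_h}\le \|z-z_h(u)\|_{\mathcal T_h}+\|\zeta_z\|_{\mathcal T_h}$, the first term being $O(h^{k+2})$ by \Cref{lemma:step3_conv_rates}. It remains to control $\|\zeta_z\|_{\mathcal T_h}$ through the error equation \eqref{eq_zh_zhu}, which is an EDG system for $(\zeta_{\bm p},\zeta_z,\zeta_{\widehat z})$ with source $-\zeta_y$. Taking $(\bm r_2,w_2,\mu_2)=(\zeta_{\bm p},\zeta_z,\zeta_{\widehat z})$ and invoking the energy identity for $\mathscr B_2$ from \Cref{property_B}, the sign condition \eqref{eqn:tau1_condition} together with $\nabla\cdot\bm\beta\le 0$ renders the boundary and volume terms nonnegative, so that $\|\zeta_{\bm p}\|_{\mathcal T_h}^2 + h^{-1}\|\zeta_z-\zeta_{\widehat z}\|_{\partial\mathcal T_h}^2 \lesssim \|\zeta_y\|_{\mathcal T_h}\|\zeta_z\|_{\mathcal T_h}$. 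The discrete gradient bound \eqref{energy_z} and the Poincar\'e inequality \eqref{poincare}, which hold verbatim for $\zeta_z,\zeta_{\widehat z},\zeta_{\bm p}$, then give $\|\zeta_z\|_{\mathcal T_h}\lesssim \|\zeta_{\bm p}\|_{\mathcal T_h}+h^{-1/2}\|\zeta_z-\zeta_{\widehat z}\|_{\partial\mathcal T_h}$, whence $\|\zeta_z\|_{\mathcal T_h}^2\lesssim \|\zeta_y\|_{\mathcal T_h}\|\zeta_z\|_{\mathcal T_h}$ and therefore $\|\zeta_z\|_{\mathcal T_h}\lesssim \|\zeta_y\|_{\mathcal T_h}=O(h^{k+2})$. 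Combining the two pieces yields the claimed bound.

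The main obstacle I anticipate is the stability estimate for $\|\zeta_z\|_{\mathcal T_h}$: the energy identity only directly controls the flux error $\zeta_{\bm p}$ and the trace jump $\zeta_z-\zeta_{\widehat z}$, so recovering $\zeta_z$ itself genuinely requires the discrete Poincar\'e inequality \eqref{poincare} and the gradient estimate \eqref{energy_z}, and the coercivity of the energy form relies on \textbf{(A1)}--\textbf{(A2)} via \eqref{eqn:tau1_condition}. Everything else is a direct consequence of the algebraic identity \eqref{eq_uuh_yhuyh}, the two optimality conditions, and the previously established auxiliary-problem rates.
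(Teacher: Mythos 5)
Your proposal is correct. For the control and state estimates it follows the paper's own argument essentially verbatim: the identity \eqref{eq_uuh_yhuyh}, the two optimality conditions \eqref{eq_adeq_e} and \eqref{EDG_full_discrete_e}, Cauchy--Schwarz with Young's inequality, \Cref{lemma:step3_conv_rates}, and then the triangle inequality with \Cref{le}. (You are in fact slightly more careful than the paper on one point: since $u-u_h\notin W_h$ in general, killing the term $(z_h+\gamma u_h,u-u_h)_{\mathcal T_h}$ genuinely requires the pointwise identity $z_h+\gamma u_h=0$, which you obtain by taking $w_3=z_h+\gamma u_h$; the paper leaves this implicit.) Where you diverge is the dual-state estimate. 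The paper observes that the same two optimality conditions give $z=-\gamma u$ and $z_h=-\gamma u_h$ pointwise, so $\|z-z_h\|_{\mathcal T_h}=\gamma\|u-u_h\|_{\mathcal T_h}$ and the third bound is an immediate corollary of the first (the paper's ``$z=\gamma u$, $z_h=\gamma u_h$'' contains a sign typo, but the argument is this one-liner). You instead bound $\|\zeta_z\|_{\mathcal T_h}$ by testing the error equation \eqref{eq_zh_zhu} with $(\zeta_{\bm p},\zeta_z,\zeta_{\widehat z})$, using the energy identity of \Cref{property_B} together with \textbf{(A2)}/\eqref{eqn:tau1_condition} and $\nabla\cdot\bm\beta\le 0$ for coercivity, and then recovering $\zeta_z$ itself through the gradient bound \eqref{energy_z} and the discrete Poincar\'e inequality \eqref{poincare}; both do apply to the $\zeta$ variables, since their flux error equation has zero right-hand side and $\zeta_{\widehat z}=0$ on $\varepsilon_h^\partial$, so this route is sound. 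It is more work than the paper's, but it does not rely on noticing the pointwise proportionality between $z$ and $u$ (so it would survive modifications of the optimality condition), and as a by-product the intermediate estimate $\|\zeta_{\bm p}\|^2_{\mathcal T_h}+h^{-1}\|\zeta_z-\zeta_{\widehat z}\|^2_{\partial\mathcal T_h}\lesssim\|\zeta_y\|_{\mathcal T_h}\|\zeta_z\|_{\mathcal T_h}$ is exactly the mechanism the paper uses afterwards in Step 8 to prove the flux bound \eqref{err_Lhu_ph}, so your detour anticipates that step.
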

\begin{proof}
	Recalling the continuous and discretized optimality conditions \eqref{eq_adeq_e} and \eqref{EDG_full_discrete_e} gives
	\begin{align*}
	\hspace{3em}&\hspace{-3em}   \gamma\|u-u_h\|^2_{\mathcal T_h}+\|\zeta_{ y}\|^2_{\mathcal T_h}\\
	&=( z_h+\gamma u_h,u-u_h)_{\mathcal T_h}-( z_h(u)+\gamma u,u-u_h)_{\mathcal T_h}\\
	&=-( z_h(u)- z,u-u_h)_{\mathcal T_h}\\
	&\le \| z_h(u)- z\|_{\mathcal T_h} \|u-u_h\|_{\mathcal T_h}\\
	&\le\frac{1}{2\gamma}\| z_h(u)- z\|^2_{\mathcal T_h}+\frac{\gamma}{2}\|u-u_h\|^2_{\mathcal T_h}.
	\end{align*}
	By \Cref{lemma:step3_conv_rates}, we have
	\begin{align}\label{eqn:estimate_u_zeta_y}
	\|u-u_h\|_{\mathcal T_h}+\|\zeta_{ y}\|_{\mathcal T_h}&\lesssim h^{k+2}(|\bm q|_{k+1}+|y|_{k+2}+|\bm p|_{k+1}+|z|_{k+2}).
	\end{align}
	Then, by the triangle inequality and  \Cref{le} we obtain
	\begin{align*}
	\|y-y_h\|_{\mathcal T_h}&\lesssim h^{k+2}(|\bm q|_{k+1}+|y|_{k+2}+|\bm p|_{k+1}+|z|_{k+2}).
	\end{align*}
	Finally, since $z = \gamma u $ and $z_h = \gamma u_h$ we have
	\begin{align*}
	\|z-z_h\|_{\mathcal T_h}&\lesssim h^{k+2}(|\bm q|_{k+1}+|y|_{k+2}+|\bm p|_{k+1}+|z|_{k+2}).
	\end{align*}
\end{proof}

\subsubsection{Step 8: Estimate for $\|q-q_h\|_{\mathcal T_h}$ and  $\|p-p_h\|_{\mathcal T_h}$.}

\begin{lemma}
	We have
	\begin{subequations}
		\begin{align}
		\|\zeta_{\bm q}\|_{\mathcal T_h} &\lesssim h^{k+2}(|\bm q|_{k+1}+|y|_{k+2}+|\bm p|_{k+1}+|z|_{k+2}),\label{err_Lhu_qh}\\
		\|\zeta_{\bm p}\|_{\mathcal T_h} &\lesssim h^{k+2}(|\bm q|_{k+1}+|y|_{k+2}+|\bm p|_{k+1}+|z|_{k+2}).\label{err_Lhu_ph}
		\end{align}
	\end{subequations}
\end{lemma}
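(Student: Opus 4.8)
The plan is to run an energy argument on each of the two error equations \eqref{eq_yh_yhu} and \eqref{eq_zh_zhu}, testing each equation against its own solution and exploiting the coercivity furnished by \Cref{property_B}. First I would take $(\bm r_1, w_1, \mu_1) = (\zeta_{\bm q}, \zeta_y, \zeta_{\widehat y})$ in \eqref{eq_yh_yhu}, so that the left-hand side becomes $\mathscr B_1(\zeta_{\bm q},\zeta_y,\zeta_{\widehat y};\zeta_{\bm q},\zeta_y,\zeta_{\widehat y})$. By the $\mathscr B_1$ energy identity in \Cref{property_B}, this equals $\|\zeta_{\bm q}\|_{\mathcal T_h}^2$ plus two boundary bilinear forms weighted by $h^{-1}+\tau_1-\tfrac12\bm\beta\cdot\bm n$ plus the volume term $-\tfrac12(\nabla\cdot\bm\beta\,\zeta_y,\zeta_y)_{\mathcal T_h}$. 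Assumption \textbf{(A2)} makes the two boundary forms nonnegative and \eqref{beta_con} makes the volume term nonnegative, so the whole expression is bounded below by $\|\zeta_{\bm q}\|_{\mathcal T_h}^2$.

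Since the right-hand side of \eqref{eq_yh_yhu} with $w_1=\zeta_y$ is $(u-u_h,\zeta_y)_{\mathcal T_h}$, the Cauchy--Schwarz inequality then yields
\begin{equation*}
\|\zeta_{\bm q}\|_{\mathcal T_h}^2 \le (u-u_h,\zeta_y)_{\mathcal T_h} \le \|u-u_h\|_{\mathcal T_h}\,\|\zeta_y\|_{\mathcal T_h}.
\end{equation*}
Both factors on the right are already controlled at order $h^{k+2}$ by \eqref{eqn:estimate_u_zeta_y}, which immediately gives \eqref{err_Lhu_qh}.

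The estimate \eqref{err_Lhu_ph} for $\zeta_{\bm p}$ is entirely analogous. Testing \eqref{eq_zh_zhu} against $(\zeta_{\bm p},\zeta_z,\zeta_{\widehat z})$ and using the $\mathscr B_2$ energy identity in \Cref{property_B}, together with the consequence \eqref{eqn:tau1_condition} of \textbf{(A1)}--\textbf{(A2)} and \eqref{beta_con}, discards all terms but $\|\zeta_{\bm p}\|_{\mathcal T_h}^2$ on the left and leaves $\|\zeta_{\bm p}\|_{\mathcal T_h}^2 \le \|\zeta_y\|_{\mathcal T_h}\,\|\zeta_z\|_{\mathcal T_h}$. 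Here $\|\zeta_y\|_{\mathcal T_h}$ is already $O(h^{k+2})$ by \eqref{eqn:estimate_u_zeta_y}; to control $\|\zeta_z\|_{\mathcal T_h}=\|z_h(u)-z_h\|_{\mathcal T_h}$ I would insert the exact dual state $z$ and use the triangle inequality, bounding $\|z-z_h(u)\|_{\mathcal T_h}$ via \Cref{lemma:step3_conv_rates} and $\|z-z_h\|_{\mathcal T_h}$ via \Cref{thm:estimates_u_y_z}, both of which are $O(h^{k+2})$. This produces \eqref{err_Lhu_ph}.

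The computation is short, so there is no deep obstacle; the one point requiring care is the sign bookkeeping in the energy identities. The crucial observation is that \textbf{(A2)} and \eqref{beta_con} are precisely what render every term other than $\|\zeta_{\bm q}\|_{\mathcal T_h}^2$ (respectively $\|\zeta_{\bm p}\|_{\mathcal T_h}^2$) nonnegative, so they may be dropped; without these hypotheses one would be forced to control sign-indefinite boundary and volume contributions. I would also note that these flux estimates are $O(h^{k+2})$, one order sharper than needed: combining them through the triangle inequality with the $O(h^{k+1})$ flux bounds of \Cref{le} and \Cref{lemma:step3_conv_rates} yields the optimal $O(h^{k+1})$ rates for $\|\bm q-\bm q_h\|_{\mathcal T_h}$ and $\|\bm p-\bm p_h\|_{\mathcal T_h}$ asserted in \Cref{main_res}.
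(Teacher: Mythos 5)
Your proposal is correct and takes essentially the same route as the paper: test each error equation \eqref{eq_yh_yhu}, \eqref{eq_zh_zhu} with its own solution, use the energy identities of \Cref{property_B} together with \textbf{(A2)} and \eqref{beta_con} to drop the nonnegative boundary and volume terms, then apply Cauchy--Schwarz with \eqref{eqn:estimate_u_zeta_y}, and for $\|\zeta_z\|_{\mathcal T_h}$ the triangle inequality via \Cref{lemma:step3_conv_rates} and \Cref{thm:estimates_u_y_z}. The only difference is presentational: the paper compresses the coercivity step into $\|\zeta_{\bm q}\|_{\mathcal T_h}^2 \lesssim \mathscr B_1(\zeta_{\bm q},\zeta_y,\zeta_{\widehat y};\zeta_{\bm q},\zeta_y,\zeta_{\widehat y})$, whereas you spell out the sign bookkeeping explicitly.
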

\begin{proof}
	By  \Cref{property_B}, the error equation \eqref{eq_yh_yhu}, and the estimate \eqref{eqn:estimate_u_zeta_y} we have
	\begin{align*}
	\|\zeta_{\bm q}\|^2_{\mathcal T_h} &\lesssim  \mathscr B_1(\zeta_{\bm q},\zeta_y,\zeta_{\widehat y};\zeta_{\bm q},\zeta_y,\zeta_{\widehat y})\\
	&=( u- u_h,\zeta_{ y})_{\mathcal T_h}\\
	&\le\| u- u_h\|_{\mathcal T_h}\|\zeta_{ y}\|_{\mathcal T_h}\\
	&\lesssim h^{2k+4}(|\bm q|_{k+1}+|y|_{k+2}+|\bm p|_{k+1}+|z|_{k+2})^2.
	\end{align*}
	Similarly,  by  \Cref{property_B}, the error equation \eqref{eq_zh_zhu},  \Cref{lemma:step3_conv_rates}, and  \Cref{thm:estimates_u_y_z} we have
	\begin{align*}
	\|\zeta_{\bm p}\|^2_{\mathcal T_h} &\lesssim  \mathscr B_2(\zeta_{\bm p},\zeta_z,\zeta_{\widehat z};\zeta_{\bm p},\zeta_z,\zeta_{\widehat z})\\
	&=-(\zeta_{ y},\zeta_{ z})_{\mathcal T_h}\\
	&\le\|\zeta_{y}\|_{\mathcal T_h}\|\zeta_{ z}\|_{\mathcal T_h}\\
	&\le\|\zeta_{y}\|_{\mathcal T_h} ( \| z_h(u) - z \|_{\mathcal T_h} + \| z - z_h \|_{\mathcal T_h} )\\
	&\lesssim h^{2k+4}(|\bm q|_{k+1}+|y|_{k+2}+|\bm p|_{k+1}+|z|_{k+2})^2.
	\end{align*}
\end{proof}
The above lemma along with the triangle inequality,  \Cref{le}, and  \Cref{lemma:step3_conv_rates} complete the proof of the main result:
\begin{theorem}
	We have
	\begin{subequations}
		\begin{align}
		\|\bm q-\bm q_h\|_{\mathcal T_h}&\lesssim h^{k+1}(|\bm q|_{k+1}+|y|_{k+2}+|\bm p|_{k+1}+|z|_{k+2}),\label{err_q}\\
		\|\bm p-\bm p_h\|_{\mathcal T_h}&\lesssim h^{k+1}(|\bm q|_{k+1}+|y|_{k+2}+|\bm p|_{k+1}+|z|_{k+2})\label{err_p}.
		\end{align}
	\end{subequations}
\end{theorem}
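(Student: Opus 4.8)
The plan is to bound each flux error by inserting the corresponding auxiliary-problem flux and applying the triangle inequality, so that the error splits into a piece already controlled in Steps 1--6 and the auxiliary-to-discrete piece bounded just above. Concretely, I would write
\begin{align*}
\|\bm q - \bm q_h\|_{\mathcal T_h} &\le \|\bm q - \bm q_h(u)\|_{\mathcal T_h} + \|\bm q_h(u) - \bm q_h\|_{\mathcal T_h} = \|\bm q - \bm q_h(u)\|_{\mathcal T_h} + \|\zeta_{\bm q}\|_{\mathcal T_h},\\
\|\bm p - \bm p_h\|_{\mathcal T_h} &\le \|\bm p - \bm p_h(u)\|_{\mathcal T_h} + \|\bm p_h(u) - \bm p_h\|_{\mathcal T_h} = \|\bm p - \bm p_h(u)\|_{\mathcal T_h} + \|\zeta_{\bm p}\|_{\mathcal T_h}.
\end{align*}

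For the approximation-to-auxiliary pieces, \Cref{le} gives $\|\bm q - \bm q_h(u)\|_{\mathcal T_h} \lesssim h^{k+1}(|\bm q|_{k+1}+|y|_{k+2})$ and \Cref{lemma:step3_conv_rates} gives the analogous $h^{k+1}$ bound for $\|\bm p - \bm p_h(u)\|_{\mathcal T_h}$ carrying all four seminorms. For the auxiliary-to-discrete pieces, the bounds \eqref{err_Lhu_qh} and \eqref{err_Lhu_ph} supply $\|\zeta_{\bm q}\|_{\mathcal T_h}, \|\zeta_{\bm p}\|_{\mathcal T_h} \lesssim h^{k+2}(|\bm q|_{k+1}+|y|_{k+2}+|\bm p|_{k+1}+|z|_{k+2})$. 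Since the standing assumption $h \le 1$ yields $h^{k+2} \le h^{k+1}$, the $\zeta$-terms are absorbed into the $h^{k+1}$ rate, and combining the two contributions produces exactly \eqref{err_q} and \eqref{err_p}.

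The main point to note is that there is essentially no obstacle remaining at this final stage: all of the genuine analytic difficulty---the energy estimate of \Cref{energy_norm_q}, the two duality arguments, and the optimality-condition estimate of \Cref{thm:estimates_u_y_z} that underlies the $\zeta$ bounds---has already been handled. The only bookkeeping subtlety is that \Cref{le} carries only the $|\bm q|_{k+1}$ and $|y|_{k+2}$ seminorms, so one relies on the $\zeta$ bounds to furnish the remaining $|\bm p|_{k+1}$ and $|z|_{k+2}$ terms appearing in the theorem; this is immediate once both contributions are summed.
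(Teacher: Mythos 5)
Your proposal is correct and follows exactly the paper's own argument: the paper also obtains \eqref{err_q} and \eqref{err_p} by the triangle inequality through the auxiliary fluxes $\bm q_h(u)$, $\bm p_h(u)$, combining \Cref{le} and \Cref{lemma:step3_conv_rates} with the $\zeta$-bounds \eqref{err_Lhu_qh} and \eqref{err_Lhu_ph}. Your observation that the $h^{k+2}$ terms are absorbed into the $h^{k+1}$ rate via $h\le 1$, and that the missing $|\bm p|_{k+1}$, $|z|_{k+2}$ seminorms in \Cref{le} are supplied by the other contributions, matches the paper's (implicit) bookkeeping.
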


\section{Numerical Experiments}
\label{sec:numerics}

In this section, we present two numerical examples to confirm our theoretical results. We consider the problems on a square domain $\Omega = [0,1]\times [0,1] \subset \mathbb{R}^2$.  For the  two examples, we take $\gamma = 1$,  $\tau_1 = 1$, $\bm \beta = [x_2,x_1]$ and the exact state $y(x_1,x_2) = \sin(\pi x_1)$. We used the optimize-then-discretize (OD) approach in  \Cref{example1}  and the discretize-then-optimize (DO) approach in \Cref{example2}. In these examples, the data $f$, $ g $, and $y_d$ is generated from the optimality system \eqref{eq_adeq}  after we specified the exact dual state $ z(x_1,x_2) = \sin(\pi x_1)\sin(\pi x_2)$.

Numerical results for $ k = 0 $ and $ k = 1 $ for the two approaches are shown in Table \ref{table_1}--Table \ref{table_4}.  The observed convergence rates and numerical results exactly match the theoretical results.

\begin{example}\label{example1}
	For the OD approach, we set the stabilization parameter $ \tau_2 $ using \textbf{(A1)}; hence, conditions \textbf{(A1)}-\textbf{(A2)} are satisfied. We obtain optimal convergence rates for all variables for $k=0$ and $k=1$ in \Cref{table_1} and \Cref{table_2}, respectively. This matches our theoretical results.
	\begin{table}
		\begin{center}
			\begin{tabular}{|c|c|c|c|c|c|}
				\hline
				$h/\sqrt 2$ &$1/8$& $1/16$&$1/32$ &$1/64$ & $1/128$ \\
				\hline
				$\norm{\bm{q}-\bm{q}_h}_{0,\Omega}$& 2.8775E-01   &1.4501E-01   &7.2649E-02   &3.6342E-02   &1.8173E-02 \\
				\hline
				order&-& 0.98861   &0.99716   &0.99929   &0.99982\\
				\hline
				$\norm{\bm{p}-\bm{p}_h}_{0,\Omega}$& 2.1036E-01   &1.0341E-01   &5.1480E-02   &2.5712E-02   &1.2852E-02\\
				\hline
				order&-& 1.0244   &1.0063   &1.0016   &1.0004\\
				\hline
				$\norm{{y}-{y}_h}_{0,\Omega}$&1.1842E-02   &3.2095E-03   &8.4824E-04   &2.1887E-04   &5.5641E-05 \\
				\hline
				order&-& 1.8834   &1.9198   &1.9544   &1.9759 \\
				\hline
				$\norm{{z}-{z}_h}_{0,\Omega}$& 1.8304E-02   &5.3420E-03   &1.4422E-03   &3.7460E-04   &9.5451E-05
				 \\
				\hline
				order&-& 1.7767   &1.8891   &1.9449   &1.9725\\
				\hline
			\end{tabular}
		\end{center}
		\caption{Example \ref{example1}: Errors for the state $y$, adjoint state $z$, and the fluxes $\bm q$ and $\bm p$ when $k=0$ with the OD approach.}\label{table_1}
	\end{table}

	\begin{table}
		\begin{center}
			\begin{tabular}{|c|c|c|c|c|c|}
				\hline
				$h/\sqrt 2$ &$1/8$& $1/16$&$1/32$ &$1/64$ & $1/128$ \\
				\hline
				$\norm{\bm{q}-\bm{q}_h}_{0,\Omega}$&1.8365E-02   &4.9165E-03   &1.2726E-03   &3.2189E-04   &8.0742E-05  \\
				\hline
				order&-& 1.9012   &1.9498   &1.9831   &1.9952\\
				\hline
				$\norm{\bm{p}-\bm{p}_h}_{0,\Omega}$& 1.6649E-02   &5.6050E-03   &1.5952E-03   &4.1463E-04   &1.0475E-04 \\
				\hline
				order&-&  1.5707  &1.8129   &1.9439   &1.9848\\
				\hline
				$\norm{{y}-{y}_h}_{0,\Omega}$&1.3524E-03   &1.8347E-04   &2.3956E-05   &3.0691E-06   &3.8882E-07\\
				\hline
				order&-& 2.8819  &2.9371   &2.9645   &2.9807 \\
				\hline
				$\norm{{z}-{z}_h}_{0,\Omega}$& 3.2125E-03   &4.2489E-04   &5.4721E-05   &6.9745E-06   &8.8190E-07
				 \\
				\hline
				order&-& 2.9186   &2.9569   &2.9719  & 2.9834 \\
				\hline
			\end{tabular}
		\end{center}
		\caption{Example \ref{example1}: Errors for the state $y$, adjoint state $z$, and the fluxes $\bm q$ and $\bm p$ when $k=1$ with the OD approach.}\label{table_2}
	\end{table}
\end{example}

\begin{example}\label{example2}
	For the DO approach, we used the same data as in \Cref{example1}. From the tables we can see that the numerical results are exactly the same with the OD approach, which confirms our theoretical results.
	\begin{table}
		\begin{center}
			\begin{tabular}{|c|c|c|c|c|c|}
				\hline
				$h/\sqrt 2$ &$1/8$& $1/16$&$1/32$ &$1/64$ & $1/128$ \\
				\hline
				$\norm{\bm{q}-\bm{q}_h}_{0,\Omega}$& 2.8775E-01   &1.4501E-01   &7.2649E-02   &3.6342E-02   &1.8173E-02 \\
				\hline
				order&-& 0.98861   &0.99716   &0.99929   &0.99982\\
				\hline
				$\norm{\bm{p}-\bm{p}_h}_{0,\Omega}$& 2.1036E-01   &1.0341E-01   &5.1480E-02   &2.5712E-02   &1.2852E-02\\
				\hline
				order&-& 1.0244   &1.0063   &1.0016   &1.0004\\
				\hline
				$\norm{{y}-{y}_h}_{0,\Omega}$&1.1842E-02   &3.2095E-03   &8.4824E-04   &2.1887E-04   &5.5641E-05 \\
				\hline
				order&-& 1.8834   &1.9198   &1.9544   &1.9759 \\
				\hline
				$\norm{{z}-{z}_h}_{0,\Omega}$& 1.8304E-02   &5.3420E-03   &1.4422E-03   &3.7460E-04   &9.5451E-05
				\\
				\hline
				order&-& 1.7767   &1.8891   &1.9449   &1.9725\\
				\hline
			\end{tabular}
		\end{center}
		\caption{Example \ref{example2}: Errors for the state $y$, adjoint state $z$, and the fluxes $\bm q$ and $\bm p$ when $k=0$ with the DO approach.}\label{table_3}
	\end{table}

	\begin{table}
		\begin{center}
			\begin{tabular}{|c|c|c|c|c|c|}
				\hline
				$h/\sqrt 2$ &$1/8$& $1/16$&$1/32$ &$1/64$ & $1/128$ \\
				\hline
				$\norm{\bm{q}-\bm{q}_h}_{0,\Omega}$&1.8365E-02   &4.9165E-03   &1.2726E-03   &3.2189E-04   &8.0742E-05  \\
				\hline
				order&-& 1.9012   &1.9498   &1.9831   &1.9952\\
				\hline
				$\norm{\bm{p}-\bm{p}_h}_{0,\Omega}$& 1.6649E-02   &5.6050E-03   &1.5952E-03   &4.1463E-04   &1.0475E-04 \\
				\hline
				order&-&  1.5707  &1.8129   &1.9439   &1.9848\\
				\hline
				$\norm{{y}-{y}_h}_{0,\Omega}$&1.3524E-03   &1.8347E-04   &2.3956E-05   &3.0691E-06   &3.8882E-07\\
				\hline
				order&-& 2.8819  &2.9371   &2.9645   &2.9807 \\
				\hline
				$\norm{{z}-{z}_h}_{0,\Omega}$& 3.2125E-03   &4.2489E-04   &5.4721E-05   &6.9745E-06   &8.8190E-07
				\\
				\hline
				order&-& 2.9186   &2.9569   &2.9719  & 2.9834 \\
				\hline
			\end{tabular}
		\end{center}
		\caption{Example \ref{example2}: Errors for the state $y$, adjoint state $z$, and the fluxes $\bm q$ and $\bm p$ when $k=1$ with the DO approach.}\label{table_4}
	\end{table}
\end{example}

\section{Conclusions}
We considered a recently proposed EDG method to approximate the solution of an optimal distributed control problems for an elliptic convection diffusion equation.  We showed the optimize-then-discretize and discretize-then-optimize approaches coincide, and proved optimal a priori error estimates for the control, state, dual state, and their fluxes.  EDG methods are known to be competitive for convection dominated problems; therefore, this new EDG method has potential for optimal control problems involving such PDEs.

\section*{Acknowledgements}   X.~Zhang thanks Missouri University of Science and Technology for hosting him as a visiting scholar; some of this work was completed during his research visit. J.~Singler and Y.~Zhang were supported in part by National Science Foundation grant DMS-1217122.  J.~Singler and Y.~Zhang thank the IMA for funding research visits, during which some of this work was completed.

\section{Appendix}

By simple algebraic operations in equation \eqref{large_op_b}, we obtain the following formulas for $ G_1 $, $ G_2 $, $ G_3$, $ G_4 $, $ H_1 $, and $ H_2 $ in \eqref{local_elei}:
\begin{align*}
G_1 &= -A_1^{-1}A_2(A_4+A_2^TA_1^{-1}A_2)^{-1}(A_5-A_2^TA_1^{-1}A_3)-A_1^{-1}A_3,\\
G_2 &= A_1^{-1}A_2(A_4+A_2^TA_1^{-1}A_2)^{-1}A_6,\\
G_3 &= -(A_4+A_2^TA_1^{-1}A_2)^{-1}(A_5-A_2^TA_1^{-1}A_3),\\
G_4 &= (A_4+A_2^TA_1^{-1}A_2)^{-1}A_6,\\
H_1 &= A_1^{-1}A_2(A_4+A_2^TA_1^{-1}A_2)^{-1}(b_3 - b_4+A_2^TA_1^{-1}b_2 ) - A_1^{-1}b_2,\\
H_2 &=(A_4+A_2^TA_1^{-1}A_2)^{-1}(b_3 - b_4+A_2^TA_1^{-1}b_2 ).
\end{align*}
In general, forming these quantities is impractical; however, for the EDG method described in this work these matrices can be easily computed.  We briefly sketch this process below.


Since the spaces $ \bm{V}_h $ and $ W_h $ consist of discontinuous polynomials, some of the system matrices are block diagonal and each block is small and symmetric positive definite (SSPD).  The inverse of such a matrix is another matrix of the same type, and the inverse is easily computed by inverting each small block.  Furthermore, the inverse of each small block can be computed in parallel.

It can be checked that $ A_1 $ is a SSPD block diagonal matrix, and therefore $ A_1^{-1} $ is easily computed and is also a SSPD block diagonal matrix.  Therefore, $ G_1 $, $ G_2 $, $ G_3 $, $ G_4 $, $ H_1 $, and $ H_2 $ are easily computed since $ A_4 + A_2^T A_1^{-1} A_2 $ is also a SSPD block diagonal matrix.  Also, once these quantities are computed, $ G_5 $, $ G_6 $, and $ H_3 $ in \eqref{local_elei} are also easy to compute using \eqref{large_op_b}.

\bibliographystyle{plain}
\bibliography{yangwen_ref_papers,yangwen_ref_books}

\end{document}